\newcolumntype{L}{>{$}l<{$}}
\theoremstyle{plain}
\newtheorem{thm}{Theorem}[section]
\newtheorem{lem}[thm]{Lemma}
\newtheorem{prop}[thm]{Proposition}
\newtheorem{cor}[thm]{Corollary}
\newtheorem{mythm}{Theorem}
\theoremstyle{definition}
\newtheorem{defn}[thm]{Definition}
\theoremstyle{remark}
\newtheorem*{rem}{Remark}
\newcommand{\re}{\mathbb{R}}
\newcommand{\co}{\mathbb{C}}
\newcommand{\ze}{\mathbb{Z}}
\newcommand{\na}{\mathbb{N}}
\renewcommand{\Im}{\mathrm{Im} \, }
\newcommand{\Id}{\mathrm{Id}}
\newcommand{\diag}{\mathrm{diag}}
\newcommand{\ag}{\mathfrak{g}}
\newcommand{\ah}{\mathfrak{h}}
\newcommand{\al}{\mathfrak{l}}
\newcommand{\Cdim}{\mathrm{Confdim}}
\renewcommand{\a}{\alpha}
\newcommand{\g}{\gamma}
\newcommand{\G}{\Gamma}
\renewcommand{\d}{\delta}
\newcommand{\e}{\varepsilon}
\renewcommand{\l}{\lambda}
\renewcommand{\L}{\Lambda}
\newcommand{\s}{\sigma}
\newcommand{\Lrm}{\mathrm{L}}
\title{Quantitative polynomial cohomology and applications to $\textrm L^p$-measure equivalence}
\author{Antonio López Neumann and Juan Paucar}
\date{}
\begin{document}

\maketitle

\begin{abstract}

    We introduce a quantitative version of polynomial cohomology for discrete groups and show that it coincides with usual group cohomology when combinatorial filling functions are polynomially bounded. 
    As an application, we show that Betti numbers of nilpotent groups are invariant by mutually cobounded $\textrm L^p$-measure equivalence. We also use this to obtain new vanishing results for non-cocompact lattices in rank 1 simple Lie groups.

    \vspace*{2mm} \noindent{2020 Mathematics Subject Classification: }37A20, 20J06, 20F65, 20F18, 57M07, 22E41.


    \vspace*{2mm} \noindent{Keywords and Phrases:} Measure equivalence, nilpotent groups, group cohomology, Betti numbers, filling functions.
\end{abstract}

\tableofcontents

\section*{Introduction}

\textit{Measure equivalence} (\textit{ME}) was first introduced by Gromov as a measurable counterpart to quasi-isometry (QI) \cite[0.5.E]{gromov}. It simultaneously generalizes the situation of two lattices sitting inside a locally compact second countable group and that of orbit equivalence (OE). Some standard references include \cite{furman2011survey, gaboriau-survey}. For precise definitions, see Section \ref{Section: Preliminaries}.

Ornstein and Weiss showed that any two countable infinite amenable groups are OE, hence ME. This means in particular that both ME and OE do not take into account the geometry of these groups (as opposed to quasi-isometry). It is thus natural to refine the notion of ME, for instance by imposing integrability conditions.

    Conditions of this kind were first considered by Shalom, first in the form of $\mathrm{L}^2$ and $\textrm L^p$-integrability of lattices \cite{shalom2000rigidity-semisimple} and later in the form of uniform measure equivalence \cite{shalom2004harmonic}. Bader, Furman and Sauer properly defined $\textrm L^p$-\textit{measure equivalence} ({$\textrm L^p$-ME}, see Section \ref{Section: Preliminaries} for a definition) in \cite{bader-furman-sauer}, which has been more systematically studied and further generalized since then \cite{DKLMT}.

    One can see $\textrm L^p$-measure equivalence as a way of interpolating between measure equivalence and quasi-isometry (at least among amenable groups). Indeed, Shalom showed that two finitely generated amenable groups are quasi-isometric if and only if they are \textit{mutually cobounded} $\mathrm{L}^\infty$-ME \cite[2.1.7]{shalom2004harmonic}. Mutual coboundedness should be thought of as a fundamental domain compatibility condition that extends OE (see Section \ref{Section: Preliminaries} for a definition).
    
    A new goal would be to study infinite countable groups up to $\textrm L^p$-ME. An important result in this direction is due to Bowen, who showed that if two finitely generated groups are $\mathrm{L}^1$-ME, then they have the same asymptotic growth \cite[Appendix B]{austin2016}. This already shows the diversity of $\mathrm{L}^1$-ME classes among amenable groups, as it distinguishes virtually nilpotent groups, groups of intermediate growth, and amenable groups of exponential growth.
    
    


    \paragraph{On $\textrm L^p$-ME of nilpotent groups}
    One class of groups for which we can give more precise statements about their $\textrm L^p$-ME classification are (virtually) nilpotent groups. Indeed, Austin showed that finitely generated groups which are $\textrm L^1$-ME have bilipschitz asymptotic cones \cite[1.1]{austin2016}. Moreover, Pansu showed that all asymptotic cones of a simply connected nilpotent Lie group are isomorphic to its associated Carnot group \cite{pansu89-carnot}. Combining these two results, we obtain that two finitely generated nilpotent groups which are $\textrm L^1$-ME have the same associated Carnot group. On the other hand, Delabie, Llosa Isenrich and Tessera showed a (slightly stronger) converse to this result: two finitely generated virtually nilpotent groups with isomorphic associated Carnot groups are $\textrm L^p$-OE for some $p>1$ \cite{delabie-llosa-tessera}. We sum up these results in the following theorem.

    \begin{thm} \cite[1.5]{delabie-llosa-tessera}
    Let $\G$ and $\L$ be two finitely generated virtually nilpotent groups. The following are equivalent: \\
    $(1)$ $\G$ and $\L$ are $\mathrm{L^1}$-ME, \\
    $(2)$ $\G$ and $\L$ are $\mathrm L^p$-OE for some $p>1$, \\
    $(3)$ $\G$ and $\L$ have isomorphic associated Carnot groups.
\end{thm}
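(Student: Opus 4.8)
The plan is to prove the three equivalences via the cycle $(2)\Rightarrow(1)\Rightarrow(3)\Rightarrow(2)$, assembling the results of Austin, Pansu and Delabie--Llosa Isenrich--Tessera recalled just above; two of the three implications are short, and the real content sits in $(3)\Rightarrow(2)$. I would start with $(2)\Rightarrow(1)$, which is essentially immediate: an orbit equivalence is by definition a measure equivalence coupling whose two Borel fundamental domains coincide, so an $\mathrm{L}^p$-OE coupling is in particular a measure equivalence coupling, and it remains to note that it is automatically $\mathrm{L}^1$ when $p>1$. This is Jensen's inequality --- after normalizing the finite-measure fundamental domain to a probability space one has $\mathrm{L}^p\subseteq\mathrm{L}^1$ for every $p\ge 1$ --- so the $\mathrm{L}^p$-integrability of the cocycles forces their $\mathrm{L}^1$-integrability, and $(2)$ yields an $\mathrm{L}^1$-OE, \emph{a fortiori} an $\mathrm{L}^1$-ME.

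For $(1)\Rightarrow(3)$, I would invoke Austin's theorem \cite[1.1]{austin2016}: if $\G$ and $\L$ are $\mathrm{L}^1$-ME then they have bilipschitz asymptotic cones. First one reduces to the torsion-free nilpotent case --- a finitely generated virtually nilpotent group $\G$ has a finite-index torsion-free nilpotent subgroup, which by Malcev's theorem is a cocompact lattice in a unique simply connected nilpotent Lie group $G_\G$, and the associated Carnot group of $\G$ is by definition the Carnot group of $G_\G$. Since $\G$, its finite-index subgroup, and $G_\G$ are pairwise quasi-isometric, they share the same asymptotic cones, and Pansu's theorem \cite{pansu89-carnot} identifies each of these with the associated Carnot group of $\G$ (and similarly for $\L$). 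Hence the associated Carnot groups of $\G$ and $\L$ are bilipschitz homeomorphic, and I would close the argument with the rigidity consequence of Pansu's differentiability theorem \cite{pansu89-carnot}: a bilipschitz homeomorphism between Carnot groups is Pansu-differentiable almost everywhere, with differential a graded group isomorphism, so the two Carnot groups are isomorphic. This is $(3)$.

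Finally, $(3)\Rightarrow(2)$ is precisely \cite[1.5]{delabie-llosa-tessera}: from an isomorphism of the associated Carnot groups one constructs, for some $p>1$, an explicit $\mathrm{L}^p$-integrable orbit equivalence coupling between $\G$ and $\L$. This is the one implication that does not reduce to a short formal manipulation, and it is the step I expect to be the main obstacle: a self-contained proof would have to reproduce the geometric coupling construction of \cite{delabie-llosa-tessera} --- roughly, transporting a suitable tiling of the Carnot group across the isomorphism and upgrading it to a measure-preserving coupling with controlled cocycles --- so in a write-up I would simply cite their theorem.
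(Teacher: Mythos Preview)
Your proposal is correct and follows exactly the approach the paper sketches in the surrounding text: the paper does not prove this theorem itself but explains that $(1)\Rightarrow(3)$ combines Austin's bilipschitz-asymptotic-cone theorem with Pansu's identification of asymptotic cones of nilpotent groups with their Carnot groups, while $(3)\Rightarrow(2)$ is the main theorem of Delabie--Llosa Isenrich--Tessera, and $(2)\Rightarrow(1)$ is immediate. Your write-up is in fact slightly more careful than the paper's discussion, since you explicitly invoke Pansu's differentiability theorem to pass from ``bilipschitz Carnot groups'' to ``isomorphic Carnot groups'', a step the paper leaves implicit.
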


Given two nilpotent groups with the same associated Carnot, one may ask what is the largest $p >1$ for which they are $\textrm L^p$-OE or $\textrm L^p$-ME. A first obstruction to $\textrm L^p$-ME for large $p>1$ was obtained using a particular type of central extensions \cite[1.13]{delabie-llosa-tessera}.
Our first result gives an obstruction to mutually cobounded $\textrm L^p$-ME (and in particular $\textrm L^p$-OE) when the Betti numbers of the groups in question are different.

\begin{mythm}\label{Intro Theorem: invariance of Betti numbers of nilpotent groups} (Qualitative version)
    Let $\L$ and $\G$ be finitely generated virtually nilpotent groups and let $k \in \na$. There exists an explicit $p = p(\G, \L, k) < \infty$ such that if $\G$ and $\L$ are mutually cobounded $\textup L^p$-ME, then 
    $\dim_\re H^k(\L, \re) =\dim_\re H^k(\G, \re)$.
\end{mythm}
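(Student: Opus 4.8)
The plan is to introduce a quantitative (polynomial) variant of group cohomology, show that for a finitely generated group with polynomially bounded combinatorial filling functions it computes the usual real cohomology, and then prove that mutually cobounded $\textup L^p$-ME for $p$ large enough transports polynomial cocycles and coboundaries between $\G$ and $\L$, hence induces an isomorphism of polynomial cohomology groups in degree $k$. Combining these two facts yields the equality of Betti numbers, since virtually nilpotent groups are of polynomial growth and thus (by a result going back to Gromov and made combinatorially precise by the usual filling-function machinery) have polynomially bounded higher filling functions in every degree.

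First I would set up the quantitative polynomial cohomology $H^\bullet_{\mathrm{pol}}(\G,\re)$: fix a finite generating set, work with the bar resolution (or equivalently simplicial cochains on the Rips/Cayley complex), and define a cochain to be \emph{polynomial} if its value on a tuple grows at most polynomially in the word lengths of the entries; the polynomial cochains form a subcomplex because the bar differential only multiplies and concatenates group elements, so polynomial growth is preserved. Then I would prove the comparison theorem stated in the abstract: when the degree-$j$ combinatorial filling function of $\G$ is bounded by a polynomial for all $j \le k$, the inclusion of the polynomial subcomplex into the full cochain complex is a quasi-isomorphism up to degree $k$. The mechanism is a quantitative homotopy: a cocycle is killed in ordinary cohomology by a cochain whose "size" is controlled by how efficiently one can fill cycles, so polynomial filling functions give polynomial primitives; conversely any ordinary cocycle can be replaced, within its class, by a polynomial one using a polynomial quasi-retraction of the universal cover onto a fundamental domain. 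This step — making the homotopy operators quantitatively polynomial and checking the bookkeeping across all degrees $\le k$ — is where most of the technical work lies, and it is the part I expect to be the \textbf{main obstacle}.

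Next I would show that mutually cobounded $\textup L^p$-ME induces, for $p$ large depending on $\G,\L,k$, a chain map between the polynomial cochain complexes in degrees $\le k$. Here I follow the standard induction/coinduction philosophy for (co)homology of ME-coupled groups: from an $\textup L^p$-ME coupling $(\Omega,m)$ with fundamental domains one builds a cocycle $\G \times X_\L \to \L$ and uses it to pull back cochains; the mutual coboundedness guarantees the pullback of a bounded (or, more relevantly, polynomially growing) cochain stays polynomially controlled after integrating over the fundamental domain, provided the word-length distortion of the cocycle is in $\textup L^p$ with $p$ exceeding the degree of the relevant polynomials (which is where the explicit bound $p = p(\G,\L,k)$ comes from — it is essentially the maximum degree appearing in the filling functions of both groups up to degree $k$, times a constant depending on $k$). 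Symmetrically one gets a chain map in the other direction, and a standard argument with the two fundamental domains shows the compositions are chain homotopic to the identity at the level of polynomial cohomology. Hence $H^k_{\mathrm{pol}}(\G,\re) \cong H^k_{\mathrm{pol}}(\L,\re)$.

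Finally I would assemble the pieces: since $\G$ and $\L$ are virtually nilpotent, they have polynomial growth, and a theorem on filling functions of nilpotent (more generally, polynomial-growth) groups gives that their combinatorial filling functions in all degrees are polynomially bounded; therefore by the comparison theorem $H^k_{\mathrm{pol}}(\G,\re) \cong H^k(\G,\re)$ and likewise for $\L$. Chaining the isomorphisms,
\[
H^k(\G,\re) \cong H^k_{\mathrm{pol}}(\G,\re) \cong H^k_{\mathrm{pol}}(\L,\re) \cong H^k(\L,\re),
\]
so in particular $\dim_\re H^k(\G,\re) = \dim_\re H^k(\L,\re)$. The explicit value of $p$ is read off from the degrees of the filling polynomials of $\G$ and $\L$ up to degree $k$; tracking these degrees carefully through the induction argument gives the claimed $p(\G,\L,k)$.
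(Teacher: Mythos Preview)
Your comparison-theorem step and the general architecture (quantitative polynomial cohomology, induction/transfer via the ME cocycles, filling functions of nilpotent groups being polynomially bounded) match the paper's approach closely. The genuine gap is in your second step, where you claim that the two chain maps you build between $C^*_{\mathrm{pol}}(\L,\re)$ and $C^*_{\mathrm{pol}}(\G,\re)$ have compositions chain homotopic to the identity ``by a standard argument with the two fundamental domains.'' No such argument exists in this form, and this is exactly where an essential extra ingredient is needed.

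The issue is that the induction map $I$ built from the cocycle $\a_\L:\G\times X_\L\to\L$ does not send $\L$-cochains with values in $\re$ to $\G$-cochains with values in $\re$; it lands in $\G$-cochains with values in the induced module $L^2(X_\L,\re)$. The transfer map $T$ goes back from $\G$-cochains in $L^2(X_\L,\re)$ to $\L$-cochains in $\re$, and indeed $T\circ I$ is a nonzero multiple of the identity (this is the part that works). But $I\circ T$ is \emph{not} homotopic to the identity: all you get is an injection $H^k(\L,\re)\hookrightarrow H^k(\G,L^2(X_\L))$. To pass from $H^k(\G,L^2(X_\L))$ to $H^k(\G,\re)$ the paper splits $L^2(X_\L)=\re\oplus L^2_0(X_\L)$ and invokes Blanc's theorem that for a simply connected nilpotent Lie group (hence, via Shapiro, for virtually nilpotent $\G$) one has $\overline{H}^k(\G,\pi)=0$ for every unitary $\pi$ without invariant vectors. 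This kills the $L^2_0$-summand and yields $H^k(\L,\re)\hookrightarrow H^k(\G,\re)$; the reverse inequality then follows by symmetry from mutual coboundedness. Your proposal never mentions this higher property $H_T$, and without it the argument does not close: integrating over the fundamental domain to force real coefficients throws away exactly the information needed to make both compositions invertible on cohomology.
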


The number $p$ obtained in this Theorem is always $> 1 $, so in particular the groups $\L$ and $\G$ in question automatically have the same associated Carnot group. Moreover, this number can be described explicitly in terms of geometric/algebraic data of the groups; we will make this more precise later in this introduction (see Theorem \ref{Intro Theorem: Quantitative invariance of Betti numbers of nilpotent groups}). For instance, we may consider the simply connected nilpotent Lie group $L_n$ whose Lie algebra is given by
\begin{equation*}
    \al_n = \langle X_1, \ldots, X_n \, | \,  [X_i , X_j ] = (j-i)X_{i+j} \text{ for } i+j \leq n \rangle.
\end{equation*}
Its associated Carnot-graded nilpotent Lie group $\mathrm{gr}(L_n)$ has Lie algebra $\mathrm{gr}(\al_n)$, which is the so-called universal or model filiform algebra given by
\begin{equation*}
    \mathrm{gr}(\al_n) = \langle Y_1, \ldots, Y_n \, | \,  [Y_1 , Y_j ] = (j-1)Y_{j+1} \quad \text{ for } 2 \leq j \leq n -1 \rangle.
\end{equation*}
The quantitative version of Theorem \ref{Intro Theorem: invariance of Betti numbers of nilpotent groups} (that is Theorem \ref{Intro Theorem: Quantitative invariance of Betti numbers of nilpotent groups}) gives that the groups $L_n$ and $\mathrm{gr}(L_n)$ cannot be $\textrm L^p$-OE for $p > n^2 +n + 5$ when $ n \geq 7$. More examples are given in Section \ref{Section: Applications for virtually nilpotent groups}.

The following paragraphs of this introduction describe the tools used to prove Theorem \ref{Intro Theorem: invariance of Betti numbers of nilpotent groups}. The tools in question are induction of cohomology and a quantitative version of polynomial cohomology. The results shown for these tools (Theorems \ref{Intro Theorem: Injectivity of induction in polynomial cohomology} and \ref{Intro theorem: Quantitative isomorphism of polynomial cohom onto ordinary cohom}) yield a quantitative version of Theorem \ref{Intro Theorem: invariance of Betti numbers of nilpotent groups} (concretely stated as Theorem \ref{Intro Theorem: Quantitative invariance of Betti numbers of nilpotent groups}), but they are of independent interest and can be used in other contexts.

\paragraph{Invariance of cohomology and injectivity of induction} Betti numbers are not the first objects of cohomological nature to have been shown to remain invariant under some variant of measure equivalence. Here is a list of ME or $\textrm L^p$-ME invariants of a cohomological nature. \\
$\bullet$ (Furman) Property $(T)$ is invariant under ME \cite[1.4]{furman1999-ME}. \\
$\bullet$ (Gaboriau) $\ell^2$-Betti numbers of countable groups are invariant under ME (up to proportionality on the sequence) \cite[6.3]{gaboriau-invariantsl2}. \\
$\bullet$ (Shalom) Betti numbers of finitely generated nilpotent groups are invariant under quasi-isometry \cite[1.2]{shalom2004harmonic} (where quasi-isometry should be understood as mutually cobounded $\mathrm{L}^\infty$-ME, so we can see that Theorem \ref{Intro Theorem: invariance of Betti numbers of nilpotent groups} is in fact a strengthening of this result). \\
$\bullet$ (Sauer) The real cohomology ring of a finitely generated nilpotent group is invariant under quasi-isometry \cite[1.5]{sauer-QI-invariants}. \\
$\bullet$ (Monod-Shalom) Vanishing of $H_b^2 (\G, \pi)$ for every unitary representation $\pi$ is invariant under ME. Vanishing of $H^2_b(\G, \ell^2(\G))$ is invariant under ME \cite[7.6]{monod-shalom}. \\ 
$\bullet$ (Marrakchi-de la Salle) For $2< p <\infty$, the fixed point property $F \textrm L^p$ is invariant under $\textrm L^p$-ME \cite[7.4]{marrakchi-dlSalle}. \\
$\bullet$ (Das) For $1 \leq p < \infty$, vanishing of $\ell^p$-cohomology in degree 1 is invariant under $\textrm L^p$-ME \cite[1.1]{das-conformal}.

The general strategy of all of these results (including Shalom) is to show a version of Shapiro's lemma for measure equivalence. More precisely, given two ME groups $\L$ and $\G$ and some isometric representation of $\L$ on a Banach space $V$, we can use the $\G$-action on the $\L$-fundamental domain $X_\L$ of the ME-coupling to obtain an isometric representation $ I \rho$ of $\G$, which we call the \textit{induced representation} of $\rho$. There are many possible choices for the space on which the induced representation can act: to remain in the category of isometric representations on Banach spaces, we will always choose $L^p(X_\L, V)$ for some $1 \leq p < \infty$. The classical Shapiro lemma \cite[III.6.2]{brown-cohomology} states that if $\L$ is a finite index subgroup of $\G$, then for every $\L$-module $V$ there is an isomorphism 
\begin{equation*}
    H^*(\L, V) \xrightarrow[\simeq]{I} H^*(\G, I V).
\end{equation*}
In more general settings, in order to ensure that this induction map $I$ with values in $L^p(X_\L, V)$ is well defined at the level of cochains, some quantitative control is required. This control can come either from conditions on the coupling space (that is, sufficient integrability) or from imposing growth restrictions on the cochains. For instance, what Shalom really proved in \cite{shalom2004harmonic} to obtain invariance of Betti numbers of nilpotent groups, is that if $\L$ and $\G$ are two mutually cobounded $\mathrm{L}^\infty$-ME groups, then for every unitary representation $(\rho, V)$ of $\L$ we have continuous injective linear maps: 
\begin{align*}
    H^*(\L, \rho) \xhookrightarrow{I} H^*(\G, I \rho), \\
    \overline{H}^*(\L, \rho) \xhookrightarrow{I} \overline{H}^*(\G, I \rho).
\end{align*}
This result should be compared with the result by Monod and Shalom on bounded cohomology \cite[4.4]{monod-shalom}, stating that if $\L$ and $\G$ are ME groups, then for every unitary representation $(\rho, V)$ of $\L$ we have that:  \begin{equation*}
    H^2_b(\L, \rho) \xhookrightarrow[]{I} H^2_b(\G, I \rho).
\end{equation*}
Here no integrability condition is required on the ME, yet a somewhat similar result is obtained. The reason is that here, the quantitative control is placed on the cochains, which are bounded, and not on the integrability of the coupling. Another result of a similar nature is the one shown by Marrakchi and de la Salle \cite[7.4]{marrakchi-dlSalle}, stating that if $\L$ and $\G$ are finitely generated $\textrm L^p$-ME groups, then for every isometric representation $(\rho, V)$ on some uniformly convex Banach space $V$, the $\textrm L^p$-induction module $(I^p\rho, I^p(V))$ satisfies:
\begin{equation*}
    H^1(\L, \rho) \xhookrightarrow[]{I} H^1(\G, I^p\rho)
\end{equation*}
(it is unclear whether this holds for reduced cohomology). Here the conditions to ensure that the induction map is well-defined lie halfway between those of the last two mentioned results. Indeed, we require both the $\textrm L^p$-integrability condition on the ME (weaker than $\mathrm{L}^\infty$) and a condition on the cochains, since cocycles for degree 1 cohomology grow automatically at most at linear speed.

\paragraph{Quantitative polynomial cohomology and injectivity of induction}
These ideas were implicitly used by Bader and Sauer in \cite{bader-sauer}, who showed that by restricting to cochains that grow at most at polynomial speed and requiring $\textrm L^p$-integrability for every $p < \infty$, one obtains injectivity of the induction map from a lattice to its ambient locally compact group in \cite[6.23]{bader-sauer}.

In order to prove Theorem \ref{Intro Theorem: invariance of Betti numbers of nilpotent groups}, we show that the induction map $I$ is well-defined in the same way as Bader and Sauer, but for a quantitative variant of polynomial cohomology (where we keep track of the degrees of the polynomials) and in the more general setting of ME.

We now introduce our quantitative variant of polynomial cohomology.
Let $\G$ be a finitely generated group, with finite generating set $S$ and associated word length $| \cdot |$. Let $(\rho, V)$ be an isometric representation of $\G$. Fix $N \in \re_{\geq 0 }$. We define the space $C_{\mathrm{pol}\leq N}^k(\G, V)$ of \textit{ cochains of polynomial growth at most $N$} as the space of maps $c: \G^{k+1} \to V$ for which there exists a constant $M$ such that for every $\g_0 , \ldots, \g_k \in \G$ we have:
\begin{equation*}
    ||c(\g_0, \ldots, \g_k)|| \leq M (1 + |\g_0| + \ldots +|\g_k|)^N.
\end{equation*}
We endow $C_{\mathrm{pol}\leq N}^k(\G, V)$ with the topology of pointwise convergence.
The usual coboundary operator $d$ for group cohomology restricts to $\G$-equivariant cochains of polynomial growth at most $N$, yielding maps $d : C_{\mathrm{pol}\leq N}^k(\G, V)^\G \to C_{\mathrm{pol}\leq N}^{k+1}(\G, V)^\G$. 

\begin{defn}
    For $k \in \na$ and $M, N \in \re_{>0}$ with $M\geq N$, we define the \textit{$k$-th quantitative polynomial cohomology space of degrees $M$ and $N$} by:
    \begin{equation*}
        H_{\mathrm{pol}: M \to  N}^k(\G, V) : = \ker ( d |_{C_{\mathrm{pol}\leq N}^{k}(\G, V)^\G}) / \big(  d ( C_{\mathrm{pol}\leq M}^{k-1}(\G, V)^\G) \cap  C_{\mathrm{pol}\leq N}^{k}(\G, V)^\G \big),
        \end{equation*}
      Similarly, the \textit{$k$-th reduced quantitative polynomial cohomology space of degrees $M$ and $N$} is defined by:
        \begin{equation*}
        \overline{H}_{\mathrm{pol}: M \to  N}^k(G, V) : = \ker ( d |_{C_{\mathrm{pol}\leq N}^{k}(\G, V)^\G}) / \big( \overline{d ( C_{\mathrm{pol}\leq M}^{k-1}(\G, V)^\G)} \cap  C_{\mathrm{pol}\leq N}^{k}(\G, V)^\G \big).
    \end{equation*}
\end{defn}

The reason why we filter polynomial cochains using two constants $M, N$ in this definition is due to the fact that when taking the coboundary $dc$ of a cochain $c$, the polynomial growth of the image $dc$ may collapse and become much smaller than that of $c$ (the most extreme case being that of a cocycle, that is $dc = 0$). Hence a cocycle $b \in C_{\mathrm{pol}\leq N}^{k-1}(\G, V)$ could perfectly be a coboundary of some $c \in C_{\mathrm{pol}\leq M}^{k-1}(\G, V)$ for some $M\gg N$, while not being the coboundary of any $c \in C_{\mathrm{pol}\leq N}^k(\G, V)$.
For more details on this definition, we refer to Section \ref{Section: A quantitative version of polynomial cohomology}.

Our main statement regarding induction of polynomial cohomology is the following (which can be stated in more general settings, but under more restrictive and complicated conditions, see Proposition \ref{Induction on quantitative polynomial cohomology} and Theorem \ref{Higher transfer operator is well defined}).

\begin{mythm}\label{Intro Theorem: Injectivity of induction in polynomial cohomology} Let $k \in \na_{\geq 2}$ and $M, N \in \re_{>0}$, with $M\geq N$. Let $\L$ and $\G$ be two finitely generated virtually nilpotent groups with ME coupling $(\Omega, m, X_\L, X_\G)$ and let $d(\G)$ denote the degree of polynomial growth of $\G$.

\begin{enumerate}
    \item[(1)] Induction \emph{(Proposition \ref{Induction on quantitative polynomial cohomology})}. Suppose that $(\Omega, m, X_\L, X_\G)$ is $L^{2M}$-integrable on $\L$. Then the induction maps $ I = I_j : C_{\mathrm{pol} \leq M}^j(\L, V)^\L \to C_{\mathrm{pol} \leq M}^j(\G, L^2(X_\L, V))^\G$ are well-defined linear continuous maps that commute with differentials for $j \geq 0$.
    \item[(2)] Transfer \emph{(Theorem \ref{Higher transfer operator is well defined})}. On top of that, suppose that $X_\G \subseteq X_\L$ and that $(\Omega, m, X_\L, X_\G)$ is $L^{r}$-integrable on $\G$ for some $r >  d(\G) k + 2M+1$. Then there exist continuous linear maps $T = T_j : C^j_{\mathrm{pol} \leq M} (\G,\textrm L^2(X_\L, V))^\G \to C^j(\L , V)^\L$ for $j \leq k$ (that we call \emph{transfer maps}) that commute with differentials and satisfy $T \circ I = \Id$. The maps $I$ and $T$ induce continuous linear maps in cohomology and reduced cohomology, such that the compositions
\begin{align*}
    &   H_{\mathrm{pol}:M \to N}^k(\L , \rho) \xrightarrow{I} H^k_{\mathrm{pol}:M \to N} (\G, I^2(\rho)) \xrightarrow{T}  H^k(\L , \rho) , \\
    &  \overline{H}_{\mathrm{pol}:M \to N}^k(\L , \rho) \xrightarrow{I} \overline{H}^k_{\mathrm{pol}:M \to N} (\G,I^2(\rho)) \xrightarrow{T}  \overline{H}^k(\L , \rho) .
\end{align*}
are the comparison maps induced by the inclusion $ C_{\mathrm{pol} \leq M}^*(\L, V) \xhookrightarrow{}  C^*(\L, V)$.
\end{enumerate}
\end{mythm}

The main technical point in the proof of Theorem \ref{Intro Theorem: Injectivity of induction in polynomial cohomology} is item $(2)$, and in particular, it is to show that the transfer maps $T$, defined in the same way as in \cite[3.2.1]{shalom2004harmonic}, are well-defined in our setting.

Recall that the number $d(\G)$ in Theorem \ref{Intro Theorem: Injectivity of induction in polynomial cohomology} can be computed explicitly from the central series $C^j(\G) = [\G, C^{j-1}(\G)]$ of $\G$ using the Bass-Guivarc'h formula
\begin{equation*}
    d(\G) = \sum_{j= 1}^s j \, \mathrm{rk}(C^{j-1}(\G) / C^j(\G)).
\end{equation*}
Since Theorem \ref{Intro Theorem: Injectivity of induction in polynomial cohomology} is mostly interesting for nilpotent groups $\L$ and $\G$ with the same associated Carnot group, they are at least $L^1$-ME, so in particular $d(\L) = d(\G)$ (combining \cite[1.5]{delabie-llosa-tessera} and \cite[Appendix B]{austin2016}).

\paragraph{Comparing quantitative polynomial cohomology and usual cohomology}
The next ingredient for proving Theorem \ref{Intro Theorem: invariance of Betti numbers of nilpotent groups} is understanding when the comparison map appearing in Theorem \ref{Intro Theorem: Injectivity of induction in polynomial cohomology} is an isomorphism.
This is exactly the content of the work of Bader and Sauer (originally due to Ji and Ramsey \cite{ji-ramsey}): a comparison theorem between polynomial cohomology and ordinary cohomology \cite[6.14]{bader-sauer}.
Assuming a polynomial control on combinatorial versions of the filling functions $\mathrm{cFV}_\G^j$ of a group $\G$ (for precise definitions, see Section \ref{Section: Preliminaries}), we establish a comparison theorem between quantitative polynomial cohomology and usual cohomology, depending on the best polynomial bounds that the functions $\mathrm{cFV}_\G^j$ satisfy. We denote:
\begin{equation*}
    \deg \mathrm{cFV}^j_\G := \inf \{ d \in \re_{>0},  \mathrm{cFV}_\G^j(t) \lesssim t^d \} \in [1 , + \infty].
\end{equation*}

\begin{mythm} \label{Intro theorem: Quantitative isomorphism of polynomial cohom onto ordinary cohom} 
Let $\G$ be a group of type $F_{d+1}$ for some $d \in \na_{\geq 1}$. Suppose that all filling functions $ \mathrm{cFV}_\G^i$ are polynomially bounded for $2 \leq i \leq d+1$. Let $V$ be an isometric representation of $\G$ on some Banach space $V$. For $1 \leq k \leq d$, there exists a constant $N_k>0$ such that for all $N > N_k$ and $M \geq N$ the comparison map:
 \begin{equation*}
     H_{\mathrm{pol}: M \to N}^k(\G, V) \to H^k(\G, V)
 \end{equation*}
is surjective (the same holds for their respective reduced versions). Moreover, there exist functions $\a_j: \re_{>0} \to \re$ for $1 \leq j \leq d$ such that for all $N > N_k$ and $M > \a_{k-1}(N)$ the comparison map:
 \begin{equation*}
     H_{\mathrm{pol}: M \to N}^k(\G, V) \to H^k(\G, V)
 \end{equation*}
is a continuous isomorphism of topological vector spaces (the same holds for their respective reduced versions). We can choose $N_1 = 1$ and for $k \geq 2$, 
\begin{equation*}
    N_k = \prod_{j= 2}^k \mathrm{deg}( \mathrm{cFV}_\G^j).
\end{equation*} The functions $\a_i$ are defined inductively by $\a_1(N) = N+1$ and for $i \geq 2$, we have:
    \begin{equation*}
        \a_i(N) = \max \{ (N+1) \prod_{j= 2}^i  \mathrm{deg}( \mathrm{cFV}_\G^j),  \prod_{j= 2}^i  \mathrm{deg}( \mathrm{cFV}_\G^j) + \a_{i-1}(N) \}.
    \end{equation*}
If the filling functions $ \mathrm{cFV}_\G^j$ are polynomial for $2 \leq j \leq k$, then surjectivity of the comparison map holds for $N = N_k$ and isomorphism holds for $N = N_k$ and $M = \a_{k-1}(N) = \a_{k-1}(N_k)$. 
\end{mythm}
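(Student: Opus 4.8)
The plan is to run the standard argument that polynomial cochains compute ordinary cohomology (as in Ji--Ramsey and Bader--Sauer), but to carry the degree bookkeeping through it explicitly. I would set up the comparison at the level of the bar resolution, working with $\G$-equivariant inhomogeneous cochains $C^*(\G,V)^\G$ and its subcomplex $C_{\mathrm{pol}\leq \bullet}^*(\G,V)^\G$. The engine is a contracting homotopy: using the type-$F_{d+1}$ hypothesis, fix a finite equivariant classifying space up to dimension $d+1$, and choose, for each dimension $i\leq d+1$, equivariant "filling" maps that express a cocycle as a coboundary. The key point is that the combinatorial filling functions $\mathrm{cFV}_\G^i$ control, in a quantitative way, how much the polynomial degree of a cochain grows when one applies this filling in dimension $i$: filling an $i$-cocycle of polynomial growth $\leq N$ produces an $(i-1)$-cochain of polynomial growth roughly $\leq \deg(\mathrm{cFV}_\G^i)\cdot N$ (up to an additive $+1$ coming from the word-length normalization $1+|\g_0|+\dots+|\g_k|$). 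This is exactly where the numbers $N_k=\prod_{j=2}^k\deg(\mathrm{cFV}_\G^j)$ and the recursion for $\a_i$ come from.

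Concretely, I would proceed as follows. \textbf{Step 1 (surjectivity).} Given an ordinary cohomology class in $H^k(\G,V)$ represented by some cocycle $c$, use the finiteness of the classifying space and a choice of basepoint/geodesic spanning-tree structure on the Cayley $2$-complex (and its higher skeleta) to replace $c$ by a cohomologous cocycle supported near geodesics, hence of at most polynomial growth; track that its degree is $\leq N_k$, using inductively the fact that the required modifications are filling operations governed by $\mathrm{cFV}_\G^j$ for $2\leq j\leq k$. This gives surjectivity of $H_{\mathrm{pol}:M\to N}^k(\G,V)\to H^k(\G,V)$ for $N>N_k$ (and for $N=N_k$ if those filling functions are genuinely polynomial, not merely polynomially bounded — the strict inequality $N>N_k$ absorbs the $\inf$ in the definition of $\deg$). \textbf{Step 2 (injectivity).} Suppose a polynomial cocycle $c\in C_{\mathrm{pol}\leq N}^k$ is an ordinary coboundary, $c=db$ for some $b\in C^{k-1}(\G,V)^\G$; we must produce $b'\in C_{\mathrm{pol}\leq M}^{k-1}(\G,V)^\G$ with $db'=c$. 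Apply the equivariant filling in dimension $k$ to $c$ directly: this yields such a $b'$ of polynomial degree bounded by $\a_{k-1}(N)$, where the recursion reflects that the dimension-$k$ filling may itself require auxiliary fillings in lower dimensions $2,\dots,k-1$. This is why we need $M>\a_{k-1}(N)$ (or $M=\a_{k-1}(N)$ in the polynomial case). Combined with Step 1, the comparison map is then a continuous bijection. \textbf{Step 3 (topology).} Both spaces carry the topology of pointwise convergence; the comparison map is continuous and linear by construction, and its inverse (on the nose, a filling operator followed by the quotient) is also continuous for pointwise convergence since filling is given by finite sums of evaluations. Hence it is an isomorphism of topological vector spaces. \textbf{Step 4 (reduced versions).} The same maps and homotopies pass to the quotients by (closures of) coboundaries; since the homotopy operators are continuous, closures are preserved, and the reduced comparison maps are isomorphisms under the same numerical hypotheses.

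The main obstacle I anticipate is \emph{not} the abstract homological algebra but making the degree estimate in Step 2 precise and tight enough to yield the stated recursion $\a_i(N)=\max\{(N+1)\prod_{j=2}^i\deg(\mathrm{cFV}_\G^j),\ \prod_{j=2}^i\deg(\mathrm{cFV}_\G^j)+\a_{i-1}(N)\}$. One has to unwind the explicit form of the contracting homotopy, identify at each dimension which combinatorial filling is invoked and how many times, and verify that composing a filling governed by $\mathrm{cFV}_\G^i$ with a previously constructed homotopy of degree $\a_{i-1}(N)$ produces degree at most $\prod_{j=2}^i\deg(\mathrm{cFV}_\G^j)+\a_{i-1}(N)$, while a "fresh" filling of a degree-$(N+1)$ cochain produces degree at most $(N+1)\prod_{j=2}^i\deg(\mathrm{cFV}_\g^j)$ — the two terms the $\max$ is taking. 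A secondary subtlety is keeping the estimates uniform over the (infinitely many) simplices, which is where one uses that the classifying space is \emph{finite} in the relevant range and that the filling functions are, by hypothesis, polynomially bounded; this forces the universal multiplicative constants hidden in $\lesssim$ to be chosen once and for all, and is precisely what the strict inequality $N>N_k$ buys us.
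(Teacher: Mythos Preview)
Your overall strategy matches the paper's: compare the bar resolution to the simplicial chain complex of a cocompact model $X$ via explicit chain maps $g_*:\co[\G^{*+1}]\to C_*(X,\co)$ and $f_*:C_*(X,\co)\to\co[\G^{*+1}]$, and track polynomial degrees through the resulting homotopy. Surjectivity goes essentially as you say: $c' := c\circ f_k\circ g_k$ is polynomial of degree $\le N_k$ because $f_k$ is finitely determined (cocompactness) and $\|g_k(\g_0,\ldots,\g_k)\|_1$ is controlled by $\prod_{j\le k}\deg\mathrm{cFV}^j$.

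Your injectivity step, however, is not quite right. You cannot ``apply the equivariant filling in dimension $k$ to $c$ directly'' to produce a polynomial primitive---the filling functions act on \emph{chains in $X$}, not on cochains, and a cocycle alone does not determine a primitive. What the paper actually does is use the chain homotopy $f_{k-1}g_{k-1}-\Id=\partial H_{k-1}+H_{k-2}\partial$ dualized: given $c=db$ with $b$ arbitrary, one sets $b' := b\circ f_{k-1}g_{k-1} - c\circ H_{k-1}$. The first term uses the \emph{arbitrary} primitive $b$ but becomes polynomial (of degree $\le N_{k-1}$) because $f_{k-1}g_{k-1}$ factors through finitely many $\G$-orbits of simplices; the second term uses only the polynomial cocycle $c$ together with a continuous homotopy $H_{k-1}:\mathcal{S}_M(\G)^{\otimes k}\to\mathcal{S}_N(\G)^{\otimes(k+1)}$. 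You need both pieces.

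This also means you have the source of the $\a_i$-recursion slightly off, and you underestimate the homological algebra. The recursion does not come from ``composing fillings with previously constructed homotopies'' at the cochain level; it comes from a \emph{weighted-norm estimate on the chain map $g_*$} (the paper's Proposition~\ref{weighted norms of g_k}): for an optimal filling $b$ of a cycle $c$ in $X$, one has $p_N^X(b)\le\|b\|_1^{N+1}+\|b\|_1\,p_N^X(c)$, which combined with the $\ell^1$-bound $\|b\|_1\le C(\text{length sum})^{N_k}$ gives exactly the two terms in the $\max$. Once this estimate is in hand, the homotopy $H_*$ is produced by \emph{relative projectivity of the bar resolution over the Banach algebra $\mathcal{S}_M(\G)$}, with target the $\mathcal{S}_N(\G)$-resolution (Lemma~\ref{Existence of continuous homotopy}); the fact that one must view the same underlying spaces as modules over \emph{different} algebras at source and target is precisely the ``technically more demanding'' point the paper flags, and it is not something you can wave away as routine.
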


The proof of Theorem \ref{Intro theorem: Quantitative isomorphism of polynomial cohom onto ordinary cohom}, just as in the work of Bader and Sauer, consists in revisiting the classical proof showing that group cohomology can be computed using equivariant cohomology of a simplicial complex on which $\G$ acts geometrically, but this time we keep track of the degrees of the polynomials each time we take a filling. Homological algebra is technically more demanding this time, as keeping track of the degrees of the polynomials forces us to see spaces as modules over a different algebra each time the degree of the polynomial increases.

Putting Theorems \ref{Intro Theorem: Injectivity of induction in polynomial cohomology} and \ref{Intro theorem: Quantitative isomorphism of polynomial cohom onto ordinary cohom} together we can prove Theorem \ref{Intro Theorem: invariance of Betti numbers of nilpotent groups} (with the same strategy as in \cite[4.1.1]{shalom2004harmonic}) and keep track of the constant $p = p (\G, \L, k)$.

\begin{mythm}\label{Intro Theorem: Quantitative invariance of Betti numbers of nilpotent groups} (Quantitative version of Theorem \ref{Intro Theorem: invariance of Betti numbers of nilpotent groups})
Let $\L$ and $\G$ be finitely generated virtually nilpotent groups with the same associated Carnot group. Let $d$ be the common degree of polynomial growth of $\L$ and $\G$ and $k \in \na_{\geq 2}$. \\ Let $N_k = \max \{ \prod_{i=2}^k \deg \mathrm{cFV_\L^i} ,\prod_{i=2}^k \deg \mathrm{cFV_\G^i}\}$. \\
$\bullet$ Suppose that $\L$ and $\G$ admit a mutually cobounded $\textrm L^p$-ME coupling for some \begin{equation*}
    p > 2  d + 2N_2 + 3.
\end{equation*}
Then $\dim_\re H^2(\L, \re) = \dim_\re H^2(\G, \re)$. \\
$\bullet$ Suppose that $\L$ and $\G$ admit a mutually cobounded $\textrm L^p$-ME coupling for some \begin{equation*}
    p >  k d+ 2 (N_k^2 + (k-2) N_k) + 1.
\end{equation*}
Then $\dim_\re H^k(\L, \re) = \dim_\re H^k(\G, \re)$.
\end{mythm}

\paragraph{Other applications: lattices in rank 1}
It is worth noting that Theorem \ref{Intro theorem: Quantitative isomorphism of polynomial cohom onto ordinary cohom} is more versatile and can give results in other contexts. For example, we can complete some of the results obtained by Bader and Sauer on cohomology of non-cocompact lattices in simple Lie groups \cite{bader-sauer}. Since their work uses polynomial cohomology without controlling the degrees of its polynomials, they can only deal with lattices in higher-rank semisimple groups as these are $\textrm L^p$-integrable for every $p<\infty$. A non-cocompact lattice $\L$ in a rank 1 simple Lie group $G$ is only $\textrm L^p$-integrable for $p < \mathrm{Confdim}(\partial G)$ \cite{shalom2000rigidity-semisimple}. We prove the following theorem, which combined with upper bounds for filling functions of lattices in rank 1 simple Lie groups, can be used to obtain statements that are similar to those of Bader and Sauer.

\begin{mythm} \label{Intro Theorem: Surjectivity from induced cohomology of rank 1 lattices} 
Let $\L$ be a non-cocompact lattice in a simple Lie group $G$ of rank 1. Let $k \geq 1$ and suppose that the filling functions $\mathrm{cFV}^j_\L$ are polynomially bounded for $j \leq k$ and let $N_1 = 1$ and $N_k = \prod_{i= 2}^k \deg \mathrm{cFV}_\L^i$. For every 
\begin{equation*}
    1 \leq p < \frac{\Cdim (\partial\mathbb{H}^n_\mathbb{K})}{N_k},
\end{equation*}
and for every isometric representation $\rho$ of $\L$ on some Banach space $V$, there exist continuous surjective linear maps:
    \begin{align*}
        H^k(G, I^p(\rho)) \twoheadrightarrow H^k(\L, \rho), \\
        \overline{H}^k(G, I^p(\rho)) \twoheadrightarrow \overline{H}^k(\L, \rho).
    \end{align*}
\end{mythm}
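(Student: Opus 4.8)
The plan is to combine the induction machinery of Theorem~\ref{Intro Theorem: Injectivity of induction in polynomial cohomology} (or rather its more general incarnation for lattices, Theorem~\ref{Higher transfer operator is well defined}) with the quantitative comparison theorem, Theorem~\ref{Intro theorem: Quantitative isomorphism of polynomial cohom onto ordinary cohom}, now applied to the lattice $\L$ and to the ambient group $G$ simultaneously. First I would recall that a non-cocompact lattice $\L \leq G$ in a rank $1$ simple Lie group $G = \Isom(\mathbb{H}^n_\mathbb{K})$ sits inside $G$ as a ``$\textrm L^p$-ME-type'' coupling (via the Haar measure on $G$, with $G/\L$ of finite volume), and that by Shalom's estimate \cite{shalom2000rigidity-semisimple} the associated cocycle is $\textrm L^p$-integrable precisely for $p < \Cdim(\partial G) = \Cdim(\partial \mathbb{H}^n_\mathbb{K})$. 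This integrability is what feeds the hypotheses of the induction theorem: given $1 \le p < \Cdim(\partial\mathbb{H}^n_\mathbb{K})/N_k$, I pick an auxiliary degree $N$ slightly above $N_k$ and then $M \ge N$; the condition $p N_k < \Cdim(\partial G)$ is exactly what is needed to make the $\textrm L^{pM}$-integrability (with $M$ chosen close enough to $N_k$) hold on the $\L$-side, while on the $G$-side the lattice is a genuine discrete subgroup so the relevant integrability is automatic (finite covolume). This produces, for $\rho$ an isometric representation of $\L$, the induction $I$ and transfer $T$ operators with $T \circ I$ equal to the comparison map $H^k_{\mathrm{pol}\le M}(\L,\rho) \to H^k(\L,\rho)$.

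Next I would invoke Theorem~\ref{Intro theorem: Quantitative isomorphism of polynomial cohom onto ordinary cohom} applied to $\L$: since the filling functions $\mathrm{cFV}^j_\L$ are polynomially bounded for $j \le k$, for $N$ large enough (above $N_k = \prod_{i=2}^k \deg\mathrm{cFV}^j_\L$) and $M$ large enough the comparison map $H^k_{\mathrm{pol}:M\to N}(\L,\rho)\to H^k(\L,\rho)$ is an isomorphism — in particular surjective. Hence $T\colon H^k_{\mathrm{pol}:M\to N}(\G,I^p\rho)\to H^k(\L,\rho)$ is surjective. The remaining, and main, point is to replace the polynomial cohomology of $G$ appearing in the target of $I$ by the ordinary continuous cohomology $H^k(G, I^p\rho)$: for this I would again apply the comparison theorem, this time to $G$ (or to a cocompact model on which $G$ acts, via the symmetric space $\mathbb{H}^n_\mathbb{K}$, which is contractible and on which $G$ acts properly), using that the filling functions of $G$ itself are (at worst) exponential but on the relevant low-degree range are polynomially controlled — in fact the symmetric space of a rank $1$ group has at most quadratic higher Dehn functions in the degrees we need, so the constant $N_k$ for $G$ is dominated by that for $\L$. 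Composing $H^k(G,I^p\rho)\to H^k_{\mathrm{pol}:M\to N}(G,I^p\rho)$ (surjective, by the comparison theorem for $G$) with $T$ gives the desired surjection $H^k(G,I^p\rho)\twoheadrightarrow H^k(\L,\rho)$, and the same argument with reduced cohomology everywhere (Theorems~\ref{Intro Theorem: Injectivity of induction in polynomial cohomology} and~\ref{Intro theorem: Quantitative isomorphism of polynomial cohom onto ordinary cohom} both have reduced versions) gives $\overline{H}^k(G,I^p\rho)\twoheadrightarrow\overline{H}^k(\L,\rho)$.

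The hard part will be checking that all the quantitative constraints are mutually compatible for the stated range $p < \Cdim(\partial\mathbb{H}^n_\mathbb{K})/N_k$, rather than some strictly smaller range. Concretely: the induction theorem requires $\textrm L^{pM}$-integrability on the $\L$-side, and $M$ must be taken at least $\a_{k-1}(N)$ for the comparison map on $\L$ to be an isomorphism; but $\a_{k-1}(N)$ and $N_k$ both scale with the product $\prod_{i=2}^k \deg\mathrm{cFV}^i_\L$, so one has to verify that when the filling functions are actually \emph{polynomial} (not merely polynomially bounded) one may take $N = N_k$ and $M = \a_{k-1}(N_k)$ sharply, and that $p\cdot M < \Cdim(\partial G)$ then reduces to $p < \Cdim(\partial G)/N_k$ after using $\a_{k-1}$'s explicit form — this is the delicate bookkeeping. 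The other subtlety is making the ``ME coupling'' formalism of Theorem~\ref{Intro Theorem: Injectivity of induction in polynomial cohomology}, stated there for two finitely generated \emph{discrete} groups, apply to the pair $(\L, G)$ with $G$ a Lie group: this is handled by the more general Theorem~\ref{Higher transfer operator is well defined} referenced in the text (induction from a lattice to its ambient locally compact group, in the spirit of \cite[6.23]{bader-sauer}), so the argument should reduce to quoting that statement with the integrability input from Shalom and the filling-function input from Leuzinger \cite{leuzinger-rank1} and Gruber \cite{gruber-filling}.
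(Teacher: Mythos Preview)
Your proposal has a genuine gap and takes a harder route than the paper. The paper's proof (Theorem~\ref{Induction for non cocompact lattice in abstract lcsc group}) avoids both of the difficulties you flag as ``delicate'' by a cleaner argument: it does \emph{not} use the transfer operator of Theorem~\ref{Higher transfer operator is well defined}, and it does \emph{not} apply the comparison theorem to $G$. Instead, it works directly with the continuous cochain complexes $L^p_\mathrm{loc}(G^{*+1},V)$: by Blanc \cite[3.5]{blanc} these form a relatively injective $\L$-resolution of $V$, so the fundamental lemma of relative homological algebra produces a chain homotopy inverse $T$ to the obvious chain map $f: C(\L^{*+1},V)^\L \to L^p_\mathrm{loc}(G^{*+1},V)^\L$. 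One then checks that $T\circ I$ agrees with the comparison map $C^k_{\mathrm{pol}\le N_k}(\L,V)^\L \to C^k(\L,V)^\L$, and invokes only the \emph{surjectivity} clause of Theorem~\ref{Intro theorem: Quantitative isomorphism of polynomial cohom onto ordinary cohom}. Surjectivity requires only $N \ge N_k$ (not $M > \a_{k-1}(N)$), so the only integrability needed is $\textrm L^{pN_k}$, which is exactly $p N_k < \Cdim(\partial G)$ by Shalom.

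By contrast, your approach needs the full \emph{isomorphism} of the comparison map on $\L$ (to know $T\circ I$ is surjective from the induction/transfer framework), which forces $M > \a_{k-1}(N_k) > N_k$ and hence $\textrm L^{pM}$-integrability with $M$ strictly larger than $N_k$; this yields only $p < \Cdim(\partial G)/\a_{k-1}(N_k)$, a strictly smaller range than claimed. This is not bookkeeping that can be fixed: $\a_{k-1}(N_k) \ge N_k + 1$ always. Separately, your plan to apply the comparison theorem to $G$ is ill-posed as stated: Theorem~\ref{Intro theorem: Quantitative isomorphism of polynomial cohom onto ordinary cohom} is about finitely generated \emph{discrete} groups acting on simplicial complexes, and the map you want, $H^k(G,I^p\rho) \to H^k_{\mathrm{pol}:M\to N}(G,I^p\rho)$, goes in the wrong direction (the comparison map is $\mathrm{pol}\to\mathrm{ordinary}$, so you would need injectivity, i.e.\ again the full isomorphism, to invert it).
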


For instance, Theorem \ref{Intro Theorem: Surjectivity from induced cohomology of rank 1 lattices} combined with Wenger's upper bounds for filling functions of Carnot groups \cite[7.3]{wenger-asymptotic-rank} imply that non-cocompact lattices in $G = \mathrm{Isom}_0 (\mathbb{H}^2_\mathbb{O})$ inherit property $[T_3]$ \cite[Definition 1.1]{bader-sauer} from the group $G$.

\begin{cor} \label{Intro Corollary: Octonionic lattices have T_3} (Corollary \ref{Octonionic lattices have T_3} in the text)
    Let $\L$ be a non-cocompact lattice in $G = \mathrm{Isom}_0 (\mathbb{H}^2_\mathbb{O})$. Then $\L$ has property $[T_3]$, that is, for every unitary representation $\pi$ of $\L$, we have $H^k (\L, \pi) = 0$ for $k =1, 2, 3$.
\end{cor}

This Corollary allows to enlarge the construction of simple Kazhdan groups given in \cite{fournier-facio-sauer} by allowing to take non-uniform lattices as a starting point.

Theorem \ref{Intro Theorem: Surjectivity from induced cohomology of rank 1 lattices} combined with sharper estimates by Leuzinger \cite{leuzinger-rank1} and Gruber \cite{gruber-filling} for filling functions of rank 1 lattices imply that Pansu's vanishing results of $\textrm L^p$-cohomology of pinched negatively curved manifolds \cite{pansu08} still hold for non-cocompact lattices in rank 1 simple Lie groups.

\begin{cor} (Corollary \ref{Vanishing of Lp-cohom for rank 1 lattices} in the text) \label{Intro Theorem: Lp cohomology of rank 1 lattices}
Let $\L$ be a non-cocompact lattice in $G = \mathrm{Isom}_0 (\mathbb{H}^n_\mathbb{K})$, where $\mathbb{H}^n_\mathbb{K}$ denotes the hyperbolic space of dimension $n$ over $\mathbb{K} = \re, \co$ or the quaternions $\mathbb{H}$. Let $k \geq 1$ and $1 \leq p < \infty$. We have $H^k(\L, \ell^p(\L)) = 0$ if \\
$\bullet$ $\mathbb{K} = \re$, $1 \leq k \leq n-2$ and $1 \leq p <  \frac{n-1}{k}$, \\
$\bullet $ $\mathbb{K}= \co$, $1 \leq k < 2n-2$ and $1 \leq p <  \frac{2n + k -1}{2k}$, \\
$\bullet $ $\mathbb{K} =  \mathbb{H}$, $1 \leq k \leq n-1$ and $1 \leq p <  \frac{4n + k -1}{2k}$, \\
$\bullet $ $\mathbb{K} =  \mathbb{O}$, $k = 1, 2, 3$ and $1 \leq p <  \frac{15 + k}{2k}$.
\end{cor}

We remark that to prove Theorem \ref{Intro Theorem: Surjectivity from induced cohomology of rank 1 lattices} and its consequences we only use the surjectivity statement of Theorem \ref{Intro theorem: Quantitative isomorphism of polynomial cohom onto ordinary cohom} (whose proof is much easier than that of the isomorphism statement).

\paragraph{Outline of the paper}
Section \ref{Section: Preliminaries} describes the general setting of ME and $\textrm L^p$-ME and introduces combinatorial versions of filling functions. Section \ref{Section: A quantitative version of polynomial cohomology} defines quantitative polynomial cohomology and introduces other technical tools (like polynomially decaying algebras) to prove Theorem \ref{Intro theorem: Quantitative isomorphism of polynomial cohom onto ordinary cohom}. Section \ref{Section: Induction and transfer} explains how to induce representations through an ME coupling, how to induce polynomial cohomology, defines the transfer operator $T$ on polynomial cohomology and shows Theorem \ref{Intro Theorem: Injectivity of induction in polynomial cohomology} (without the nilpotence assumptions). Section \ref{Section: Applications for virtually nilpotent groups} explains how to use Sections \ref{Section: A quantitative version of polynomial cohomology} and \ref{Section: Induction and transfer} to deduce Theorem \ref{Intro Theorem: Quantitative invariance of Betti numbers of nilpotent groups} (and hence Theorem \ref{Intro Theorem: invariance of Betti numbers of nilpotent groups} as well). It also gives more concrete examples of families of nilpotent groups that we can distinguish up to (mutually cobounded) $\textrm L^p$-ME using Theorem \ref{Intro Theorem: Quantitative invariance of Betti numbers of nilpotent groups}. Section \ref{Section: Applications to non-cocomp lattices in rank 1} explains how to use Theorem \ref{Intro theorem: Quantitative isomorphism of polynomial cohom onto ordinary cohom} to obtain Theorem \ref{Intro Theorem: Surjectivity from induced cohomology of rank 1 lattices} and its corollaries.

\paragraph{Acknowledgements} The first author was funded by the National Science Center Grant Maestro-13 UMO-2021/42/A/ST1/00306 and by Fondation Sciences Mathématiques de Paris. He wants to thank Piotr Nowak for many useful exchanges. Both authors thank Claudio Llosa Isenrich and Romain Tessera for their interest, as well as discussions and intuitions they shared with us. We thank Francesco Fournier-Facio and Roman Sauer for pointing us to their article \cite{fournier-facio-sauer}, which motivated us to include a proof of Corollary \ref{Intro Corollary: Octonionic lattices have T_3} in this new version.
Finally, we are indebted to Gabriel Pallier for his generous help and precise references regarding polynomiality of filling functions of nilpotent groups.

\section{Preliminaries} \label{Section: Preliminaries}

\subsection{Measure equivalence}

\begin{defn} (Gromov)
    Let $\L$ and $\G$ be two countable discrete groups. We say that $\L$ and $\G$ are \textit{measure equivalent} (\textit{ME}) if there exist commuting actions of $\L$ and $\G$ on some standard measure space $(\Omega, m)$ that are measure preserving, free and with finite measure Borel fundamental domains $X_\L$ and $X_\G$ respectively. The space $(\Omega, X_\L, X_\G, m)$ is called a \textit{ME-coupling} for $\L$ and $\G$.
\end{defn}

Let $\L$ and $\G$ be two measure equivalent countable groups.

In what follows we fix a ME-coupling $(\Omega, X_\L, X_\G, m)$ between $\L$ and $\G$, as in the definition of measure equivalence. The fundamental domain $X_\G$ determines an  $\G$-equivariant map $p_\G : \Omega \to \G$ such that $p_\G (x) = e_\G \iff x \in X_\G$ and $p_\G(\g . x) = \g \, p_\G(x)$ for all $\g \in \G$ and almost every $x \in \Omega$. This map satisfies $p_\G(y)^{-1} . y \in X_\G$ for almost every $y \in \Omega$. Similarly, the choice of the fundamental domain $X_\L$ determines a $\L$-equivariant map $p_\L : \Omega \to \L$ such that $p_\L (x) = e_\L \iff x \in X_\L$, $p_\L(\l . x) = \l \, p_\L(x)$ and $p_\L(x)^{-1} .x \in X_\L$ for all $\l \in \L$ and almost every $x \in \Omega$.

Since the actions of $\L$ and $\G$ on $\Omega$ commute, we can define the induced action of $\G$ on $\Omega / \L$ (resp. the induced action of $\L$ on $\Omega / \G$). We will identify these quotient spaces with the corresponding $\L$ and $\G$-fundamental domains. In order to understand the induced actions under this identification, we define cocycles. \newline


\begin{defn}
We define the \textit{cocycles} $c_\L$ and $ c_\G$ associated to the ME-coupling $(\Omega, X_\L, X_\G, m)$ by
\begin{align*}
    c_\G : \L \times X_\G & \to \G \qquad & c_\L : \G \times X_\L &\to \L \\
    (\l, x)&\mapsto p_\G(\l.x)^{-1} & (\g, x) & \mapsto p_\L(\g.x)^{-1}
\end{align*}
\end{defn}

Using these cocycles we obtain an induced action of $\G$ on the fundamental domain $X_\L$. Indeed, let $\g \in \G$,  $x \in X_\L \mapsto \g \cdot x : =  c_\L(\g, x). \g . x \in X_\L$. Notice that this induced action preserves the finite measure $m|_{X_\L}$. Similarly, we obtain an induced action of $\L$ on $X_\G$ defined by: let $\l \in \L$, $x \in X_\G \mapsto \l \cdot x : =  c_\G(\l, x) . \l . x \in X_\G$, which preserves the measure $m|_{X_\G}$.

These cocycles satisfy the so-called \textit{cocycle relation}. Let $\g_1, \g_2 \in \G, \l_1, \l_2 \in \L$ and $x \in X_\L, y \in X_\G$, we have:
\begin{align*}
    c_\L(\g_1 \g_2, x) &= c_\L(\g_1, \g_2 \cdot x) c_\L(\g_2, x), \\
    c_\G(\l_1 \l_2, y) & = c_\G(\l_1, \l_2 \cdot y) c_\G(\l_2, y). 
\end{align*}
In practice, it will be more useful to work with $\a_\L (\g, x) := c_\L (\g^{-1}, x)^{-1}$ and $\a_\G (\l, y) := c_\G (\l^{-1}, y)^{-1}$. The cocycle relations for $\a_\L$ and for $\a_\G$ are:
\begin{align*}
    \a_\L(\g_1 \g_2, x) = \a_\L (\g_1, x) \a_\L (\g_2, \g_1^{-1} \cdot x), \\
    \a_\G(\l_1 \l_2, y) = \a_\G (\l_1, y) \a_\G (\l_2, \l_1^{-1} \cdot y), 
\end{align*}
where $\g_1, \g_2 \in \G, \l_1, \l_2 \in \L$ and $x \in X_\L, y \in X_\G$.

\paragraph{$\textrm L^p$-measure equivalence}

We now define $\textrm L^p$-measure equivalence, by requiring cocycles to be $\textrm L^p$.

\begin{defn}
    Let $(\G, S_\G)$ and $(\L, S_\L)$ be two finitely generated measure equivalent groups, with ME-coupling $(\Omega, X_\G, X_\L,  m)$ and associated cocycles $c_\G : \L \times X_\G \to \G$ and $c_\L : \G \times X_\L \to \L$. Denote by $|\cdot|_{S_\G}$ and $|\cdot|_{S_\L}$ the word metrics associated to the symmetric generating sets $S_\G$ and $S_\L$. Let $p > 0$. We say that the ME-coupling is an \textit{$\textrm L^p$-ME coupling on $\L$} if for all $\g \in \G$ we have:
    \begin{equation*}
        \int_{X_\L} |\a_\L(\g, x)|_{S_\L}^p \, \mathrm{d}m(x) < + \infty.
    \end{equation*}
    Similarly, we say that the ME-coupling is an \textit{$\textrm L^p$-ME coupling on $\G$} if for all $\l \in \L$ we have:
    \begin{equation*}
        \int_{X_\G} |\a_\G(\l, x)|_{S_\G}^p \, \mathrm{d}m(x) < + \infty.
    \end{equation*}
\end{defn}

We will now state a standard lemma that follows from the cocycle relation. It allows us (among other things) to test $\textrm L^p$-ME only on generating sets.

\begin{lem}\label{Growth of integrals is sublinear}
    Let $(\G, S_\G)$ and $(\L, S_\L)$ be two finitely generated ME groups and $(\Omega, m)$ a ME-coupling with fundamental domains $X_\L$ and $X_\G$ and cocycles $c_\L$ and $c_\G$. For $p>0$ define 
    \begin{equation*}
        M_p = \sup_{s \in S_\G} \Big( \int_{X_\L} |\a_\L(s, x)|_{S_\L}^p \, \mathrm{d}m(x) \Big)^{1/p}.
    \end{equation*}
    Then for every $p >0$ and $\g \in \G$ we have:

    \begin{equation*}
        \Big( \int_{X_\L} |\a_\L(\g, x)|_{S_\L}^p \, \mathrm{d}m(x) \Big)^{1/p} \leq M_p | \g |_{S_\G}.
    \end{equation*}
\end{lem}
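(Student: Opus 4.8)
The plan is to reduce the estimate for a general $\g$ to the defining estimate on generators, by iterating the cocycle relation for $\a_\L$ and then exploiting that the induced $\G$-action on $X_\L$ preserves $m|_{X_\L}$. First I would fix $\g \in \G$ and write $\g = s_1 s_2 \cdots s_n$ as a geodesic word in $S_\G$, so that $n = |\g|_{S_\G}$ and each $s_i \in S_\G$. Applying the cocycle relation $\a_\L(\g_1\g_2, x) = \a_\L(\g_1, x)\,\a_\L(\g_2, \g_1^{-1}\cdot x)$ repeatedly (an easy induction on $n$, using that the induced action is a left action so that the shifted basepoint at step $i$ is $(s_1\cdots s_{i-1})^{-1}\cdot x$, with the empty product read as $e_\G$) gives the telescoping identity
\begin{equation*}
    \a_\L(\g, x) = \prod_{i=1}^{n} \a_\L\big(s_i,\ (s_1\cdots s_{i-1})^{-1}\cdot x\big) \qquad \text{for a.e. } x \in X_\L .
\end{equation*}
Since the word length on $\L$ is subadditive, this yields the pointwise bound $|\a_\L(\g, x)|_{S_\L} \le \sum_{i=1}^{n} \big|\a_\L\big(s_i, (s_1\cdots s_{i-1})^{-1}\cdot x\big)\big|_{S_\L}$.

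Next I would take $\textrm L^p$-norms over $(X_\L, m|_{X_\L})$. For each fixed $i$, the map $x \mapsto (s_1\cdots s_{i-1})^{-1}\cdot x$ is the induced action of the fixed element $(s_1\cdots s_{i-1})^{-1} \in \G$ on $X_\L$, which preserves $m|_{X_\L}$; hence the change of variables $y = (s_1\cdots s_{i-1})^{-1}\cdot x$ gives
\begin{equation*}
    \Big(\int_{X_\L} \big|\a_\L(s_i, (s_1\cdots s_{i-1})^{-1}\cdot x)\big|_{S_\L}^p\, \diff m(x)\Big)^{1/p} = \Big(\int_{X_\L} |\a_\L(s_i, y)|_{S_\L}^p\, \diff m(y)\Big)^{1/p} \le M_p .
\end{equation*}
For $p \ge 1$, combining this with the pointwise bound and Minkowski's inequality in $\textrm L^p(X_\L, m)$ gives
\begin{equation*}
    \Big(\int_{X_\L} |\a_\L(\g, x)|_{S_\L}^p\, \diff m(x)\Big)^{1/p} \le \sum_{i=1}^n \Big(\int_{X_\L} |\a_\L(s_i, (s_1\cdots s_{i-1})^{-1}\cdot x)|_{S_\L}^p\, \diff m(x)\Big)^{1/p} \le n\, M_p = M_p\, |\g|_{S_\G},
\end{equation*}
which is exactly the asserted inequality.

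There is no serious obstacle here; the proof is a direct computation, and the only points to handle with care are the bookkeeping of basepoints in the iterated cocycle identity and the verification that the relevant transformations of $X_\L$ are indeed the measure-preserving induced actions. The one genuinely delicate case is $0 < p < 1$, where $\textrm L^p$ is no longer normed and Minkowski's inequality is unavailable: there one instead uses subadditivity of $t \mapsto t^p$ to get $\int_{X_\L}|\a_\L(\g,\cdot)|_{S_\L}^p\,\diff m \le \sum_{i=1}^n \int_{X_\L} |\a_\L(s_i,\cdot)|_{S_\L}^p\,\diff m \le n\, M_p^p$, which already suffices for the intended application, namely that finiteness of the cocycle integrals on the generating set $S_\G$ propagates to all of $\G$.
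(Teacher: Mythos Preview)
Your proof is correct and follows essentially the same route as the paper: cocycle relation, subadditivity of the word length, measure-preservation of the induced action, and Minkowski, with the only cosmetic difference that the paper shows two-term subadditivity of $\g \mapsto \|\,|\a_\L(\g,\cdot)|_{S_\L}\,\|_{L^p}$ and then inducts, whereas you expand the full product at once. Your explicit remark on the $0<p<1$ case is a welcome addition, since the paper's proof (like yours for $p\ge 1$) invokes Minkowski and so does not literally establish the stated inequality in that range either; your observation that the weaker bound still propagates finiteness is exactly what is needed downstream.
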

\begin{proof}
    Let $\g_1, \g_2 \in \G$. We successively integrate the cocycle relation on $x \in X_\L$ and use the triangle inequality and Minkowski's inequality to obtain:
    \begin{align*}
         & \Big( \int_{X_\L} |\a_\L(\g_1 \g_2, x)|_{S_\L}^p \, \mathrm{d} m (x) \Big)^{1/p} \\ 
         &  =  \Big( \int_{X_\L} |\a_\L (\g_1, x) \a_\L (\g_2, \g_1^{-1} \cdot x)|_{S_\L}^p \, \mathrm{d} m (x) \Big)^{1/p} \\
         & \leq  \Big( \int_{X_\L} \big(|\a_\L (\g_1, x)|_{S_\L} + |\a_\L (\g_2, \g_1^{-1} \cdot x)|_{S_\L} \big)^p \, \mathrm{d} m (x) \Big)^{1/p} \\
         & \leq  \Big( \int_{X_\L} |\a_\L (\g_1, x) |_{S_\L}^p \mathrm{d} m (x) \Big)^{1/p} + \Big( \int_{X_\L} |\a_\L (\g_2, \g_1^{-1} \cdot x)|_{S_\L}^p \, \mathrm{d} m (x) \Big)^{1/p} \\
         & = \Big( \int_{X_\L} |\a_\L(\g_1, x) |_{S_\L}^p \mathrm{d} m (x) \Big)^{1/p} + \Big( \int_{X_\L} |\a_\L (\g_2, x)|_{S_\L}^p \, \mathrm{d} m (x) \Big)^{1/p}. 
    \end{align*}
    Using this on some $\g \in \G$ to reason by induction on the length $| \g|_{S_\G}$ gives the desired inequality.
\end{proof}

\paragraph{Mutually cobounded measure equivalence and orbit equivalence}


Aside from imposing integrability conditions on some ME-coupling $(\Omega, X_\L, X_\G, m) $ between two countable groups $\L$ and $\G$, our results will require a hypothesis of different nature, this time regarding the relative position of the two fundamental domains $X_\L$ and $X_\G$ inside the coupling space $\Omega$.

\begin{defn}
    Given two countable groups $\L$ and $\G$, we say that a ME-coupling $(\Omega, X_\L, X_\G, m) $ between $\L$ and $\G$ is \textit{cobounded from $\G$ to $\L$} if there exists a finite set $F_\L \subset \L$ such that: 
    \begin{equation*}
         X_\G \subset \bigsqcup_{\l \in F_\L} \l X_\L.
    \end{equation*}
    
    We say that the ME-coupling $(\Omega, X_\L, X_\G, m) $ is \textit{mutually cobounded} if it is cobounded from $\L$ to $\G$ and from $\G$ to $\L$, that is, there exist finite sets $F_\L \subset \L$ and $F_\G \subset \G$ such that: 
    \begin{equation*}
        X_\G \subset \bigsqcup_{\l \in F_\L} \l X_\L \quad \text{ and } \quad  X_\L \subset \bigsqcup_{\g \in F_\G} \g X_\G.
    \end{equation*} 
\end{defn}

Our results will often require mutually cobounded $\textrm L^p$-ME. The following lemma states that if a ME-coupling is cobounded, up to changing one of the two groups by a finite product, we can assume that one of the two fundamental domains is contained in the other, and this without losing any integrability. Similar statements can be found in \cite[2.1.7]{shalom2004harmonic} or in the proof of \cite[Claim 4.5]{DKLMT-hyperbolicity}.

\begin{lem}\label{Putting a fundamental domain inside the other}
    Let $\L$ and $\G$ be two countable groups and $(\Omega, X_\L, X_\G, m) $ a ME-coupling,  that is $\Lrm^p$-integrable for some $1 \leq p \leq \infty$ and cobounded from $\G$ to $\L$. Then there exists a finite group $K$ and a ME-coupling $(\overline{\Omega}, Y_\L, Y_{\G \times K}, \overline{m} ) $ between $\L$ and $\G \times K$ such that $Y_{\G \times K} \subseteq  Y_\L$ and is $\Lrm^p$-integrable.
\end{lem}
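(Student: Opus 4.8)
The plan is to realize $\G\times K$ as an "inflation" of $\G$ inside a larger coupling space, where $K$ is built from the finite set $F_\L$ witnessing coboundedness. First I would record the setup: since the coupling is cobounded from $\G$ to $\L$, there is a finite set $F_\L = \{\l_1,\dots,\l_r\}\subseteq\L$ with $X_\G\subseteq\bigsqcup_{i}\l_i X_\L$. Partitioning $X_\G$ accordingly, write $X_\G=\bigsqcup_i A_i$ with $A_i\subseteq\l_i X_\L$, so that $\l_i^{-1}A_i\subseteq X_\L$ and the sets $\l_i^{-1}A_i$ are pairwise disjoint (they live in disjoint $\L$-translates of $X_\L$ only after we track which translate, so really I want $\bigsqcup_i \l_i^{-1}A_i\subseteq X_\L$, using that the $A_i$ are disjoint and the $\l_i$-translates are disjoint). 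The natural candidate for the new fundamental domain is then $Y:=\bigsqcup_i\l_i^{-1}A_i\subseteq X_\L$, which is an $\L$-fundamental domain for the $\L$-action on $\Omega$ of the same total measure as $X_\G$, and is contained in $X_\L$ by construction.

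Next I would produce the finite group $K$ and the coupling with $\G\times K$. Take $K$ to be any finite group containing a copy of $F_\L^{-1}F_\L$ or, more simply, let $K = S_r$ (or $(\ze/r)$ — any finite group of order $\geq r$ acting to permute the $r$ "sheets"); the cleanest choice is to let $K$ be the finite group generated inside $\Isom(\Omega)$-type data by the bijections shuffling the $A_i$'s, but concretely one sets $\overline{\Omega}=\Omega$, keeps the $\L$-action, and replaces the $\G$-action by a $(\G\times K)$-action where $K$ permutes the decomposition $X_\G=\bigsqcup_i A_i$. Equivalently, and this is the formulation I'd push through: on $\overline\Omega := \Omega$ with its measure $\overline m := m$, keep the $\L$-action unchanged and note that $Y\subseteq X_\L$ is an $\L$-fundamental domain; we must exhibit a commuting $(\G\times K)$-action with $Y$ as a $(\G\times K)$-fundamental domain. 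Since $Y$ and $X_\G$ have the same measure and both are fundamental domains for groups acting on $\Omega$ (for $\L$ and $\G$ respectively, but these don't commute with each other trivially)... here I would instead follow the route in \cite[2.1.7]{shalom2004harmonic}: build $K$ as the finite group $F_\L^{-1}F_\L$ does not quite work, so take $K$ to be any finite group with a free action making the bookkeeping below consistent, and define the $(\G\times K)$-coupling by inducing. The point is that the map $x\mapsto \l_i^{-1}x$ for $x\in A_i$ conjugates the restricted structure, and the ambiguity of which sheet $i$ one lands in after applying $\G$ gets absorbed into the $K$-factor via the cocycle $c_\L$ evaluated on $F_\L$.

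Concretely, the induced cocycle is: for $\g\in\G$ and $x\in Y$, say $x=\l_i^{-1}a$ with $a\in A_i\subseteq X_\G$, the element $\g\cdot a\in\Omega$ lies in some $\l_j X_\L$, i.e. $p_\L(\g a)=\l_j\mu$ for a unique $\mu\in\L$ and index $j$; one then declares the $(\G\times K)$-action on $Y$ to send $x$ to $\l_j^{-1}(\g a)$ with $\G$-component recording $\g$-displacement in $\G$ and $K$-component recording the "sheet change" $i\mapsto j$ — and this requires $K$ to be large enough to contain all these sheet permutations, so $|K| \geq r$ suffices (take $K=S_r$). The $\L^p$-integrability is preserved because the new cocycle $\a_{\L}^{\mathrm{new}}(\g,\cdot)$ differs from $\a_\L(\g,\cdot)$ only by left-multiplication by one of the finitely many elements of $F_\L$ (equivalently, composition with the finitely many fixed maps $a\mapsto\l_i^{-1}a$), and multiplying by a fixed group element changes word length by a bounded additive constant; hence the $p$-th moment stays finite by Lemma \ref{Growth of integrals is sublinear} applied to the bounded correction, and symmetrically the $\G$-side cocycle for $\G\times K$ differs from that of $\G$ only in the $K$-coordinate, which is bounded since $K$ is finite.

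\textbf{Main obstacle.} The delicate point is not the integrability bookkeeping (routine, as above) but checking that the proposed $(\G\times K)$-action on $\overline\Omega$ is genuinely an action — i.e. that the sheet-permutation data assembles into a cocycle into $K$ satisfying the cocycle relation, so that $\G\times K$ (and not just a groupoid) acts — and that it commutes with the $\L$-action and has $Y$ as a Borel fundamental domain of finite measure. This is exactly the kind of verification carried out in \cite[2.1.7]{shalom2004harmonic} and in the proof of \cite[Claim 4.5]{DKLMT-hyperbolicity}; I expect to spend the bulk of the argument making the choice of $K$ (and the associated $K$-valued cocycle) precise enough that associativity is manifest, most cleanly by taking $K$ to be a finite symmetric group acting on the finite index set of sheets and defining the $K$-component of the action directly in terms of the (almost everywhere defined, Borel) assignment $x\mapsto$ "which $\l_i$-sheet does $x$ belong to".
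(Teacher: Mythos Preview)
Your plan has a genuine gap: the choice $\overline{\Omega}=\Omega$ cannot work. A free $(\G\times K)$-action on $\Omega$ commuting with $\L$ would have a fundamental domain of measure $m(X_\G)/|K|$, yet you want $Y_{\G\times K}=Y=\bigcup_i \l_i^{-1}A_i$, which (even ignoring overlaps) has measure $m(X_\G)$. The ``sheet change'' $i\mapsto j$ you write down is a cocycle for the induced $\G$-action on a fundamental domain, not a $K$-action on all of $\Omega$; there is simply no room for a nontrivial $K$ to act freely on $\Omega$ while commuting with both $\L$ and $\G$. Two smaller errors compound this: your $Y$ is \emph{not} a $\L$-fundamental domain (it has measure $m(X_\G)$, not $m(X_\L)$), and the pieces $\l_i^{-1}A_i=(\l_i^{-1}X_\G)\cap X_\L$ need not be pairwise disjoint inside $X_\L$, so the ``disjoint union'' you write is not one.

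The paper's construction sidesteps all of this by \emph{enlarging} the coupling space: take $\overline{\Omega}=\Omega\times F_\L$ with $\overline{m}=m\otimes(\text{counting})$, let $K$ be any finite group acting simply transitively on $F_\L$, let $\L$ act on the first factor only via $\l\star(\omega,f)=(\l.\omega,f)$, and let $\G\times K$ act by $(\g,k)\star(\omega,f)=(\g.\omega,k.f)$. Commutation and freeness are then trivial, and
\[
Y_\L=\bigsqcup_{f\in F_\L} f.X_\L\times\{f\},\qquad Y_{\G\times K}=\bigsqcup_{f\in F_\L}(X_\G\cap f.X_\L)\times\{f\}
\]
are visibly fundamental domains with $Y_{\G\times K}\subseteq Y_\L$. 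With this in hand the ``main obstacle'' you identify dissolves entirely, and the $\Lrm^p$-integrability check is exactly the routine bookkeeping you already sketched (the new cocycles differ from the old only by elements of the fixed finite set $F_\L$).
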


\begin{proof}
Let $F_\L$ be such that $ X_\G \subset \bigsqcup_{\l \in F_\L} \l X_\L$ and $K$ be any finite group admitting a simply transitive action on $F_\L$. Fix $f_0 \in F_\L$.
Consider: \\ 
$\bullet$ $ \overline{\Omega} = \Omega \times F_\L$, where the $\L$-action on $\overline{\Omega}$ is $\l \star(\omega, f) := (\l.\omega, f)$ for $(\omega, f) \in \overline{\Omega}$ and $\l \in \L$, and the $(\G \times K)$-action on $\overline{\Omega}$ is given by  $(\g, k)\star(\omega, f) = (\g.\omega, k.f)$ for $(\omega, f) \in \overline{\Omega}$ and $(\g, k) \in \G \times K$. \\
$\bullet$ $Y_\L =  \bigsqcup_{f \in F_\L} f.X_\L \times \{ f \}$, \\
$\bullet$ $Y_{\G \times K} = \bigsqcup_{f \in F_\L} (X_\G\cap f .X_\L) \times \{ f \}$, \\
$\bullet $ $\overline{m} = m \otimes c$ where $c$ is the counting measure on $F_\L$. \\
It is easy to check that the quadruplet $(\overline{\Omega}, Y_\L, Y_{\G \times K}, \overline{m} ) $ is a ME-coupling. 
It is also clear that $Y_{\G \times K} \subseteq  Y_\L$. 

Denote by $c_\L$ and $c_\G$ the cocycles associated to the ME-coupling $(\Omega, X_\L, X_\G, m) $. The cocycles $\overline{c}_\L : (\G \times K) \times Y_\L \to \L$ and $\overline{c}_{\G \times K} : \L \times Y_{\G \times K} \to \G \times K$ associated to the new ME-coupling $(\overline{\Omega}, Y_\L, Y_{\G \times K}, \overline{m} ) $ are defined by: \\
$\bullet$ For $y = ( f.x, f) \in Y_\L$ and $(\g, k) \in \G \times K$, we have $\overline{c}_\L((\g, k), y) = f c_\L (\g, x) f^{-1}$. \\
$\bullet$ For $y = (f.x , f) \in Y_{\G \times K}$, where $f.x \in X_\G$, $x\in X_\L $ and for $\l \in \L$, we have: $\overline{c}_{\G \times K} (\l , y) = (c_\G(\l f, x), k)$, where $k$ is the unique $k \in K$ so that $\g. \l f. x \in k f. X_\L$.
It is now easy to see from these formulas that the integrability of the cocycles $\overline{c}_\L$ and $\overline{c}_{\G \times K}$ will be the same as for the original cocycles $c_\L$ and $c_\G$.
\end{proof}

We close this section by discussing the related notion of orbit equivalence (OE). We will not really use the definition of OE, but we recall that two countable groups $\L$ and $\G$ are OE if and only if there exists a ME-coupling $(\Omega, X_\L, X_\G, m) $ between them satisfying $X_\L = X_\G$.
In particular, such a coupling is mutually cobounded. Notice that while Lemma \ref{Putting a fundamental domain inside the other} allows us to put one fundamental domain inside the other in the situation of a mutually cobounded ME-coupling, we cannot always get a coupling satisfying $X_\L = X_\G$. Hence $\mathrm L^p$-OE (that is, the existence of a ME-coupling $(\Omega, X_\L, X_\G, m) $ satisfying $X_\L = X_\G$ that is $\mathrm L^p$-integrable) is a relation stronger than mutually cobounded $\mathrm L^p$-ME.


\subsection{Finiteness properties and filling functions}

In this section we introduce a combinatorial version of filling functions, these can be seen as a possible way of extending the more classical Dehn function to higher dimensions. Before this, we recall the definition of the finiteness property $F_d$, which will be a standing assumption for most of our results. Most of these definitions and properties are presented in \cite[Section 6.3]{bader-sauer} and \cite[Section 2]{young-filling-nilpotent}.

\begin{defn}
    Let $\G$ be a discrete group and $d \in \na_{>0}$. We say that $\G$ is \textit{of type $F_{d}$} if there exists a contractible simplicial complex $X$ on which $\G$ acts properly with finitely many orbits on its $d$-skeleton.
\end{defn}

With this definition at hand, we now define combinatorial filling functions, first for simplicial complexes.
Let $X$ be a simplicial complex, we denote the integer valued simplicial chain complex of $X$ by $C_*(X, \ze)$, with boundary operator $\partial$ and $Z^k(X, \ze) =C^k(X, \ze) \cap \ker \partial$. Recall that for $c = \sum_{\s \in X^{(k)}} a_\s \s$, the $\ell^1$-norm of $c$ is $||c||_1 = \sum_{\s \in X^{(k)}} |a_\s|$.

The \textit{(combinatorial) filling volume} of an integral cycle $c \in Z_{k-1}(X, \ze)$ is defined as:
\begin{equation*}
    \mathrm{cFV}^k(c):= \min_{b \in C_k(X, \ze), \partial b = c} ||b||_1.
\end{equation*}

\begin{defn}
    Let $k\in \na_{\geq2}$ and suppose that the complex $X$ is $(k-1)$-connected. We define the \textit{$k$-th combinatorial filling function} $\mathrm{cFV}^k$ as:
    \begin{equation*}
        \mathrm{cFV}_X^k(t) := \sup_{c \in Z_{k-1}(X, \ze),  ||c||_1 \leq t} \mathrm{cFV}^k(c). 
    \end{equation*}
\end{defn}

We may now extend this definition to groups. For this, we need to introduce the relation on functions under which this will be well-defined.
For two increasing functions $f, g : \na \to \re_{>0}$, the relation $f \lesssim g$ means that there exists $C>0$ such that for every $n > 0 $ we have $f(n) \leq C g(Cn + C) + Cn + C$, and $f \sim g$ means that $f \lesssim g$ and $g \lesssim f$. 

If $\G$ is a discrete group acting by isometries properly and cocompactly on two $(k-1)$-connected simplicial complexes $X$ and $Y$, we have
\begin{equation*}
    \mathrm{cFV}_X^k \sim \mathrm{cFV}^k_Y
\end{equation*}
(see \cite[Corollary 3]{alonso-wang-pride} for higher Dehn functions, the argument also applies to the combinatorial filling functions $\mathrm{cFV}^j$). 
Hence for a group $\G$ of type $F_{k+1}$, we may define (up to the relation $\sim$) the combinatorial filling function $\mathrm{cFV}_\G^k$ of $\G$ as $\mathrm{cFV}_X^k$ for any $(k-1)$-connected simplicial complex $X$ with such an action.

We say that a function $f: \na \to \re_{>0}$ has \textit{polynomial growth} or is \textit{polynomially bounded} if there exists $d \in \re_+$ such that $f \lesssim t^d$.
If there exists $d \in \re_{>0}$ such that $f \sim t^d$ we will say that $f$ is \textit{polynomial}.
Wenger exhibited examples of (nilpotent) groups whose Dehn function $\d_\G$ is polynomially bounded but not polynomial \cite{wenger-non-polyn-dehn}.
In any case, our results will barely be sensitive to the distinction between polynomial and polynomially bounded.
If $\mathrm{cFV}_\G^k$ is polynomially bounded, we denote 
\begin{equation*}
    \deg \mathrm{cFV}^k_\G := \inf \{ d \in \re_{>0},  \mathrm{cFV}_\G^k \lesssim t^d \}.
\end{equation*}

The filling functions $\mathrm{cFV}_\G^{k+1}$ are related to their homotopical variants, the higher Dehn functions $\d_\G^k$ \cite[Section 2]{young-filling-nilpotent}. Indeed, for $k \geq 3$, we have $\mathrm{cFV}_\G^{k+1} \sim \d_\G^k$. For $k = 1, 2$, the relationship is more subtle. For $k = 2$, it is known that $\d^2_\G \lesssim  \mathrm{cFV}_\G^3$ and that there exist groups for which this inequality is strict \cite{abrams-brady-dani-young}. For $k = 1$, it is conjectured that the opposite behaviour holds, that is $\mathrm{cFV}_\G^2 \lesssim\d_\G := \d_\G^1$ \cite[2.29]{brady-kropholler-soroko}. This is nearly known to be true, we record the following result, which is a particular case of \cite[2.28]{brady-kropholler-soroko} applied to the convex function $t \mapsto t^d$.

\begin{lem}\label{Dehn function is nearly smaller than Filling Area}
    Let $\G$ be a finitely presented group. If for some $d \in \re_{>1}$ we have that $\d_\G \lesssim t^d$, then $\mathrm{cFV}_\G^{2} \lesssim t^d$. In particular $\deg \mathrm{cFV}^2_\G \leq d$.
\end{lem}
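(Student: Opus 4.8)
The plan is to reduce this to the cited result of Brady--Kropholler--Soroko \cite[2.28]{brady-kropholler-soroko}, which relates the Dehn function of a finitely presented group to its homological (combinatorial) filling function through an arbitrary convex gauge function. Concretely, \cite[2.28]{brady-kropholler-soroko} should say something like: if $f$ is a convex function and $\d_\G(n) \preceq f(n)$, then $\mathrm{cFV}_\G^2(n) \preceq f(n)$ (possibly with an extra linear or polylogarithmic term absorbed into the $\lesssim$ relation). The idea behind their result is the standard one: given a $1$-cycle $c$ of $\ell^1$-mass at most $t$ in a $1$-connected complex on which $\G$ acts geometrically, one writes $c$ as a sum of at most $O(t)$ embedded loops each of length $O(t)$, fills each loop by a disk of combinatorial area at most $\d_\G(O(t))$ using van Kampen diagrams, and assembles these disks into a $2$-chain bounding $c$; convexity of the gauge is what lets one sum the individual filling bounds without loss.

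First I would invoke \cite[2.28]{brady-kropholler-soroko} with the specific convex function $t \mapsto t^d$; this is legitimate precisely because $d > 1$ ensures $t \mapsto t^d$ is convex on $\re_{\geq 0}$ (the hypothesis $d \in \re_{>1}$ in the lemma is there exactly for this reason). From the hypothesis $\d_\G \lesssim t^d$ and the conclusion of the cited proposition, one gets $\mathrm{cFV}_\G^2 \lesssim t^d$ directly, where the $\lesssim$ relation defined in the preliminaries (allowing rescaling of the argument and additive linear terms) swallows whatever lower-order contributions appear in \cite[2.28]{brady-kropholler-soroko}. The second assertion, $\deg \mathrm{cFV}_\G^2 \leq d$, is then immediate from the very definition $\deg \mathrm{cFV}_\G^2 = \inf\{ e \in \re_{>0} : \mathrm{cFV}_\G^2 \lesssim t^e \}$, since we have just exhibited $d$ as a member of the set over which the infimum is taken.

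There is essentially no obstacle here beyond correctly matching the hypotheses of \cite[2.28]{brady-kropholler-soroko}: one needs $\G$ finitely presented (so that the Dehn function and $\mathrm{cFV}_\G^2$ are both defined up to $\sim$, which is the standing assumption of the lemma), and one needs the gauge function to be convex, which holds for $t^d$ exactly when $d \geq 1$. The only point requiring a line of care is verifying that the version of the $\lesssim$ relation used in \cite{brady-kropholler-soroko} is compatible with (i.e. implies, or is implied in the relevant direction by) the one fixed in this paper's preliminaries — both are the standard ``coarse domination up to affine rescaling'' relation, so this is routine. Thus the proof is a one-step citation plus an unwinding of definitions.
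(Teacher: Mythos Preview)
Your proposal is correct and matches the paper's approach exactly: the paper does not give a standalone proof but simply records the lemma as a particular case of \cite[2.28]{brady-kropholler-soroko} applied to the convex function $t \mapsto t^d$, which is precisely what you do. Your observation that the hypothesis $d>1$ is there to guarantee convexity of $t\mapsto t^d$ is the only point worth making, and you make it.
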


One can expect $\deg \mathrm{cFV}^k_\G $ to attain any real value. Indeed, there are examples of groups $\G$ having higher Dehn functions $\d_\G^k \sim t^d$ where $d$ ranges over a dense subset of $ [ (k+1)/k,\infty)$ \cite[Corollary E]{brady-bridson-forester-shankar}.

\section{A quantitative version of polynomial cohomology}\label{Section: A quantitative version of polynomial cohomology}

The goal of this section is to prove Theorem \ref{Intro theorem: Quantitative isomorphism of polynomial cohom onto ordinary cohom}, which can be thought of as a quantitative analogue of \cite[Proposition 6.14]{bader-sauer}. We restate the result here.

\begin{thm} \label{Quantitative isomorphism of polynomial cohom onto ordinary cohom} 
Let $X$ be a contractible free simplicial $\G$-complex with cocompact $(d+1)$-skeleton. Suppose that all the filling functions $ \mathrm{cFV}_X^i$ are polynomially bounded for $1 \leq i \leq d+1$. Let $V$ be an isometric representation of $\G$ on some Banach space $V$. For $1 \leq k \leq d$, there exist a constant $N_k>0$ such that for all $N > N_k$ and $M \geq N$ the comparison map:
 \begin{equation*}
     H_{\mathrm{pol}: M \to N}^k(\G, V) \to H^k(\G, V)
 \end{equation*}
is surjective (the same holds for their respective reduced versions). Moreover, there exists a function $\a_k: \re_{>0} \to \re$ such that for all $N > N_k$ and $M > \a_{k-1}(N)$ the comparison map:
 \begin{equation*}
     H_{\mathrm{pol}: M \to N}^k(\G, V) \to H^k(\G, V)
 \end{equation*}
is an isomorphism (the same holds for their respective reduced versions). We can choose $N_1 = 1$ and for $k \geq 2$, $N_k = \prod_{j= 2}^k \mathrm{deg}( \mathrm{cFV}_X^j)$ and the functions $\a_i$ are defined inductively by the formula $\a_1(N) = N+1$ and for $i \geq 2$, we have:
    \begin{equation*}
        \a_i(N) = \max \{ (N+1) \prod_{j= 2}^i  \mathrm{deg}( \mathrm{cFV}_X^j),  \prod_{j= 2}^i  \mathrm{deg}( \mathrm{cFV}_X^j) + \a_{i-1}(N) \}.
    \end{equation*}
If the filling functions $ \mathrm{cFV}_X^j$ are polynomial for $2 \leq j \leq k$, then surjectivity of the comparison map holds for $N = N_k$ and isomorphism holds for $N = N_k$ and $M = \a_{k-1}(N) = \a_{k-1}(N_k)$. 
\end{thm}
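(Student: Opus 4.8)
The plan is to imitate the classical argument — due in this form to Bader–Sauer, following Ji–Ramsey — that group cohomology $H^*(\G,V)$ can be computed as the $\G$-equivariant cohomology of a contractible free cocompact $\G$-complex $X$, but to run it while tracking how the degrees of polynomial cochains behave under the chain homotopies that witness contractibility. The key structural fact is that a contraction of $X$ (with cocompact $(d{+}1)$-skeleton) can be chosen so that the chain homotopy $h_k \colon C_k(X,\ze) \to C_{k+1}(X,\ze)$ is ``quasi-controlled'': the coefficients of $h_k(\sigma)$ live in a ball of radius $\mathrm{cFV}_X^{k+1}(O(\mathrm{diam}\,\sigma))$, since filling a $k$-cycle of bounded $\ell^1$-norm produces a $(k{+}1)$-chain of $\ell^1$-norm at most $\mathrm{cFV}_X^{k+1}$ of that norm. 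Translating through the quasi-isometry $\G \simeq X^{(0)}$ (using the word length $|\cdot|$ on $\G$), pairing an equivariant cochain of polynomial growth $\leq M$ with such a filling chain produces a cochain of growth $\leq M \cdot \deg\mathrm{cFV}_X^{k+1}$ — this is exactly where the multiplicative factors $\prod_{j=2}^k \deg\mathrm{cFV}_X^j$ in the definitions of $N_k$ and $\a_k$ come from.

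First I would set up the bicomplex / resolution machinery: from $X$ one gets the standard free resolution $C_*(X,\ze)$ of $\ze$ over $\ze\G$, and $H^*(\G,V) = H^*(\Hom_{\ze\G}(C_*(X,\ze),V))$. One then compares the equivariant simplicial cochain complex with the homogeneous bar complex $C^*(\G,V)^\G$ via explicit $\G$-equivariant chain maps (a choice of $\G$-orbit representatives and barycentric-type maps in both directions), and one checks — this is the content to be made quantitative — that these chain maps and the homotopies between their composites and the identity send polynomial cochains of growth $\leq M$ to polynomial cochains of growth $\leq M \cdot (\text{product of relevant }\deg\mathrm{cFV})$, because each application of a homotopy costs one filling. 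For surjectivity of $H^k_{\mathrm{pol}:M\to N}(\G,V) \to H^k(\G,V)$: given an ordinary cocycle $c \in C^k(\G,V)^\G$, push it to the simplicial side, where it is automatically controlled on the cocompact $k$-skeleton, pull it back to get a cochain of polynomial growth bounded by $N_k = \prod_{j=2}^k \deg\mathrm{cFV}_X^j$, and correct by a coboundary of growth $\leq M$ to land exactly in $C^k_{\mathrm{pol}\leq N}(\G,V)^\G$ with $dc=0$ there — this needs $N > N_k$ (or $=N_k$ in the polynomial, not merely polynomially bounded, case) and $M$ large enough. For injectivity/isomorphism: if $c \in C^k_{\mathrm{pol}\leq N}$ is a cocycle that is an ordinary coboundary $c = db$, one runs the contraction to produce an explicit primitive; the primitive has growth controlled by $\a_{k-1}(N)$, where the recursion for $\a_i$ reflects composing homotopies in degrees $2,\dots,i$, each multiplying by the corresponding $\deg\mathrm{cFV}$ and adding lower-degree contributions — hence $M > \a_{k-1}(N)$ suffices. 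Continuity of the comparison map and of its inverse, and the statements for reduced cohomology, follow because all the maps involved are linear and continuous for the topology of pointwise convergence, and closures of images are preserved by continuous linear maps with continuous linear ``inverses up to homotopy''.

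The main obstacle, and the reason the paper devotes a whole section to it, is the homological algebra bookkeeping: in the non-quantitative Bader–Sauer setting everything lives in the single category of $\ze\G$-modules with polynomially bounded maps, but here the degree of the polynomial \emph{strictly increases} each time one takes a filling, so the cochain spaces $C^*_{\mathrm{pol}\leq M}$ for different $M$ are genuinely different objects and one cannot phrase the argument as a single spectral-sequence or resolution comparison. One must instead work with a directed system of complexes indexed by the degree $M$, track explicitly how each structural map shifts $M$, and only at the end take the colimit (or fix $M$ large enough) — this is what forces the two-parameter cohomology $H^k_{\mathrm{pol}:M\to N}$ and the somewhat intricate definitions of $N_k$ and $\a_i$. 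A secondary technical point is ensuring the contraction of $X$ can be chosen $\G$-equivariantly \emph{and} with filling-controlled homotopies simultaneously; this is handled by building the contraction skeleton-by-skeleton, at each stage choosing fillings of orbit representatives of cycles and extending equivariantly, so that the control constant only depends on the (finitely many) orbits in each degree up to $d{+}1$. Once these two points are in place, the explicit formulas for $N_k$ and $\a_k$ drop out of chasing the degrees through the standard diagrams, and the final sentence about polynomial (rather than polynomially bounded) filling functions is just the observation that in that case the infimum defining $\deg\mathrm{cFV}_X^j$ is attained, so one does not need the strict inequalities $N>N_k$, $M>\a_{k-1}(N)$.
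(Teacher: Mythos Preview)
Your overall strategy is correct and matches the paper's: build $\G$-equivariant chain maps $g_*\colon \co[\G^{*+1}]\to C_*(X,\co)$ and $f_*\colon C_*(X,\co)\to\co[\G^{*+1}]$, control the growth of $g_*$ via the filling functions (this is exactly the paper's Propositions~\ref{L^1-norm of g_k} and~\ref{weighted norms of g_k}, yielding the constants $N_k$ and $\a_k(N)$), and then use a chain homotopy $H_*$ between $f_*g_*$ and $\Id$ to prove surjectivity and injectivity of the comparison map.

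Two points where your packaging diverges from the paper are worth flagging. First, your worry about choosing the contraction $h_*$ of $X$ to be $\G$-equivariant is misplaced and would fail: a contraction to a basepoint cannot be equivariant for an infinite group. The paper instead takes $h_*$ to be any (non-equivariant) contraction built from optimal fillings, defines $g_k$ on tuples of the form $(1,\g_1,\dots,\g_k)$ via $h_{k-1}\circ g_{k-1}\circ\partial$, and then \emph{extends $g_*$ equivariantly}; the estimates are proved for tuples starting with $1$ and transferred by equivariance. Second, for the injectivity step you say one ``runs the contraction to produce an explicit primitive'', but the paper does not build $H_*$ directly from $h_*$. Instead it introduces the weighted $\ell^1$-algebras $\mathcal{S}_N(\G)$, identifies $C^k_{\mathrm{pol}\leq N}(\G,V)^\G$ with $\hom_{\mathcal{S}_N(\G)}(\mathcal{S}_N(\G)^{\hat\otimes(k+1)},V)$ (Lemma~\ref{Quantitative extension versus growth}), observes that the growth bounds on $g_*$ mean $f_ig_i$ extends to a continuous $\mathcal{S}_M(\G)$-map $\mathcal{S}_M(\G)^{\hat\otimes(i+1)}\to\mathcal{S}_N(\G)^{\hat\otimes(i+1)}$, and then obtains $H_*$ abstractly from relative projectivity of the free $\mathcal{S}_M(\G)$-modules in the bar resolution (Lemma~\ref{Existence of continuous homotopy}). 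This Banach-algebra repackaging replaces your ``directed system indexed by $M$'' and is what makes the degree-tracking clean; without it you would need to construct $H_*$ by hand and verify its growth separately, which is doable but messier.
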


In this section we will first introduce usual group cohomology, a quantitative variant of polynomial cohomology and the dual theory of polynomial decay subalgebras of $\ell^1$. The proof of Theorem \ref{Quantitative isomorphism of polynomial cohom onto ordinary cohom} then consists on revisiting the isomorphism between group cohomology and equivariant simplicial cohomology in a quantitative way.

\subsection{Quantitative polynomial cohomology}

Polynomial cohomology is defined by restricting our attention to cochains with a polynomial growth condition. 
Following \cite{bader-sauer}, we introduce a quantitative version of polynomial cohomology in order to take into account the degree of the polynomial in question.

We first recall how to define (continuous) group cohomology as in \cite[Chapter  IX]{borel-wallach}. Even though we will restrict our attention to discrete groups, we will give these definitions in the locally compact setting.

Let $G$ be a locally compact second countable group and $(\rho, V)$ be a continuous isometric representation of $G$ on a Banach space. Let $k \in \na$. We define the space $C^k (G , V)$ of continuous maps $c: G^{k+1} \to V$ and we view it as a Fréchet space with the topology of uniform convergence on compact subsets. Consider the closed subspace of \textit{continuous $G$-equivariant cochains} $C^k (G , V)^G$ consisting of continuous cochains $c: G^{k+1} \to V$, such that for all $g , g_0 , \ldots, g_k \in G$ we have:
\begin{equation*}
    c(gg_0, \ldots, gg_k) = \rho(g) c(g_0, \ldots , g_k).
\end{equation*}

We have the usual coboundary operator $d : C^k(G, V) \to C^{k+1}(G, V)$ defined by the formula:
\begin{equation*}
    (dc)(g_0, \ldots, g_{k+1}) = \sum_{i = 0}^{k+1} (-1)^{i} c(g_0, \ldots ,g_{i-1}, g_{i+1} ,\ldots, g_{k+1}),
\end{equation*}
for every $c \in C^k(G, V)$ and $g_0, \ldots, g_{k+1} \in G$. This operator restricts to continuous $G$-equivariant cochains and gives a bounded operator $d:  C^k(G, V)^G \to C^{k+1}(G, V)^G$. We denote by 
\begin{equation*}
    Z^{k}(G, V) := \ker ( d |_{C^{k}(G, V)^G}), \quad
    B^{k}(G, V) := d ( C^{k-1}(G, V)^G).
\end{equation*}

\begin{defn}
    For $k \in \na$, we define the $k$-th \textit{continuous cohomology group with coefficients in the representation} $(\rho, V)$ (resp. \textit{reduced continuous cohomology group with coefficients in} $(\rho, V)$) to be the quotient:
    \begin{equation*}
        H_{\mathrm{ct}}^k(G, V) : = Z^{k}(G, V) /  B^{k}(G, V), \quad  \text{(resp. } \overline{H}_{\mathrm{ct}}^k(G, V) : = Z^{k}(G, V) /  \overline{B^k(G, V)} ).
    \end{equation*}
\end{defn}

Now let $G$ be a compactly generated locally compact group, with compact generating set $S$ and associated word length $| \cdot |$. Fix $N \in \re_{\geq 0 }$. We define the space $C_{\mathrm{pol}\leq N}^k(G, V)$ of \textit{continuous cochains of polynomial growth at most $N$} as the space consisting of $c \in C^k(G,V)$ for which there exists a constant $M$ such that for every $g_0 , \ldots, g_k \in G$ we have:
\begin{equation*}
    ||c(g_0, \ldots, g_k)|| \leq M (1 + |g_0| + \ldots +|g_k|)^N.
\end{equation*}
We endow $C_{\mathrm{pol}\leq N}^k(G, V)$ with the topology of uniform convergence on compact subsets. Notice that endowed with this topology, $C_{\mathrm{pol}\leq N}^k(G, V)$ is not closed inside $C^k(G, V)$, hence not complete.

The operator $d$ restricts to cochains of polynomial growth at most $N$ and to $G$-equivariant cochains of polynomial growth at most $N$. Hence we obtain two maps $d : C_{\mathrm{pol}\leq N}^k(G, V) \to C_{\mathrm{pol}\leq N}^{k+1}(G, V)$ and $d : C_{\mathrm{pol}\leq N}^k(G, V)^G \to C_{\mathrm{pol}\leq N}^{k+1}(G, V)^G$. We denote by 
\begin{equation*}
    Z_{\mathrm{pol}\leq N}^{k}(G, V) := \ker ( d |_{C_{\mathrm{pol}\leq N}^{k}(G, V)^G}).
\end{equation*}
and for $M \geq N$
\begin{equation*}
    B_{\mathrm{pol}: M \to N}^{k}(G, V) := d ( C_{\mathrm{pol}\leq M}^{k-1}(G, V)^G) \cap  C_{\mathrm{pol}\leq N}^{k}(G, V)^G.
\end{equation*}

\begin{defn}
    For $k \in \na$ and $M\geq N \geq 0$, we define the \textit{$k$-th quantitative polynomial cohomology space of degrees $M$ and $N$} by:
    \begin{equation*}
        H_{\mathrm{pol}: M \to  N}^k(G, V) : = Z_{\mathrm{pol}\leq N}^{k}(G, V) / B_{\mathrm{pol}: M \to N}^{k}(G, V).
        \end{equation*}
        The definition of its corresponding reduced version is a little bit more delicate. The space $Z_{\mathrm{pol}\leq N}^{k}(G, V)$ carries the subspace topology of $Z^{k}(G, V)$ (and it may not be a closed subspace). We write $\overline{B_{\mathrm{pol}: M \to N}^k(G, V)}$ for the closure of $B_{\mathrm{pol}: M \to N}^k(G, V)$ inside $Z_{\mathrm{pol}\leq N}^{k}(G, V)$. We may define the \textit{$k$-th reduced quantitative polynomial cohomology space of degrees $M$ and $N$} by:
        \begin{equation*}
        \overline{H}_{\mathrm{pol}: M \to  N}^k(G, V) : = Z_{\mathrm{pol}\leq N}^{k}(G, V) /  \overline{B_{\mathrm{pol}: M \to N}^k(G, V)}.
    \end{equation*}
\end{defn}

These two spaces inherit a natural quotient topology, and $\overline{H}_{\mathrm{pol}: M \to  N}^k(G, V)$ is the largest Hausdorff quotient of $H_{\mathrm{pol}: M \to  N}^k(G, V)$.

The inclusion of polynomial cochains $C_{\mathrm{pol}\leq N}^{k}(G, V) \to C^{k}(G, V)$ induces natural continuous maps \begin{align*}
    H_{\mathrm{pol}:M \to N}^k(G, V) &\to H_\mathrm{ct}^k(G, V), \\
     \overline{H}_{\mathrm{pol}:M \to N}^k(G, V) &\to \overline{H}_\mathrm{ct}^k(G, V).
\end{align*}
that we call \textit{comparison maps}. The goal of this section is to prove Theorem \ref{Intro theorem: Quantitative isomorphism of polynomial cohom onto ordinary cohom}, which describes situations in which these comparison maps are isomorphisms for well chosen $N$ and $M$.

\begin{rem} To simplify this discussion, let $\G$ be a discrete group.
    Notice that when endowing the spaces $C^k(\G, V)$ with the $\s$-finite measure 
    \begin{equation*}
        \mu_k (\g_0, \ldots, \g_k)= (1 + |\g_0| + \ldots +|\g_k|)^{-N},
    \end{equation*}
    we may see the space $C^k_{\mathrm{pol}\leq N}(\G, V)$ as the weighted $\mathrm{L}^\infty$-space $\mathrm{L}^\infty (\G^{k+1}, V, \mu_k)$. The space $\mathrm{L}^\infty (\G^{k+1}, V, \mu_k)$ can be naturally endowed with the topology of the norm 
    \begin{equation*}
        ||c||_{\infty, \mu_k} : = \sup \{ ||c(\xi)|| \mu_k(\xi), \xi \in \G^{k+1} \} .
    \end{equation*}
    Nevertheless, this is not the topology we are endowing $C^k_{\mathrm{pol}\leq N}(\G, V)$ with.
\end{rem}

\subsection{Polynomially decaying algebras}

We introduce some preliminary tools that will be necessary for the proof of Theorem \ref{Intro theorem: Quantitative isomorphism of polynomial cohom onto ordinary cohom}. These tools are analogous to those in \cite[6.2]{bader-sauer}.

Let $\G$ be a discrete finitely generated group with finite generating set $S$ and associated word length $| \cdot |$. 

 For a function $f : \G \to \co$ and $N \in \re_{\geq 0}$ we let 
\begin{equation*}
    p_N(f) = \sum_{\g \in \G} |f(\g)| (1 + |\g|)^N. 
\end{equation*}
Define $\mathcal{S}_N(\G) = \{f : \G \to \co, \, p_N(f) < \infty \}$. This is an algebra for convolution, as subadditivity of the word length $| \cdot |$ implies that for any $f, f' : \G \to \co$ we have:
\begin{equation*}
    p_N(f * f') \leq  p_N(f) p_{N}(f').
\end{equation*}
The space $\mathcal{S}_N(\G)$ endowed with the norm $p_N$ can be seen as a weighted $\ell^1$-space for the measure $m$ on $\G$ given by $m(\g) = (1 + |\g|)^N$. Hence the space $\mathcal{S}_N(\G)$ is complete, and therefore a Banach algebra. In fact every $\mathcal{S}_N(\G)$ is an isometric copy of the usual $\ell^1$-space.

Let $X$ be a free simplicial $\G$-complex with cocompact $d$-skeleton for some $d \in \na$. The simplicial chain complex of $X$ is denoted by $C_*(X, \co)$.

Endow each $d$-simplex in $X$ with the Euclidean metric coming from the standard $d$-dimensional simplex in $\re^{d+1}$ and endow $X$ with the corresponding path metric. Choose a base vertex $v_0$ in $X$. We define a weight function $w$ on an ordered simplex $\s$ by setting $w(\s)$ to be the distance in the path metric of the first vertex of $\s$ to $v_0$. We define $\mathcal{S}_N C_* (X)$ to be the completion of $C_*(X, \co)$ for the norm $p_N^X$, defined on a simplicial $d$-chain $\tau = \sum a_\s \s$ by:
\begin{equation*}
    p_N^X(\tau) = \sum_{\s} |a_\s| (1 + w(\s))^N.
\end{equation*}

The following lemma explains the relation between polynomial cohomology and the Banach algebra $\mathcal{S}_N(\G)$.

\begin{lem} \cite[ Lemma 6.3]{bader-sauer} \label{Quantitative extension versus growth}
    Let $V$ be an isometric representation on a Banach space. For every $N > 0$, the inclusion
    \begin{equation*}
        \co [\G^{n+1}] = \co[\G] \otimes \ldots \otimes \co[\G] \subset \mathcal{S}_N(\G) \hat{\otimes} \ldots \hat{\otimes} \mathcal{S}_N(\G)
    \end{equation*}
    induces isomorphisms:
    \begin{equation*}
        C_{\mathrm{pol}\leq N} (\G^{n+1}, V)^{\G} \simeq \mathrm{hom}_{\mathcal{S}_N(\G)} (\mathcal{S}_N(\G) \hat{\otimes} \ldots \hat{\otimes} \mathcal{S}_N(\G), V).
    \end{equation*}
    where $\hat{\otimes}$ denotes the projective tensor product.
\end{lem}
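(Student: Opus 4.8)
\emph{Proof proposal.} The statement is close to a tautology once the spaces are recognised, so the plan is to build the isomorphism explicitly and to concentrate on the one genuinely technical point: the translation between polynomial growth and boundedness of a module map. To set the stage I would first note that $\mathcal{S}_N(\G)$ is an isometric copy of the weighted space $\ell^1(\G,(1+|\cdot|)^N)$, and that the projective tensor product of $\ell^1$-spaces is the $\ell^1$-space of the product index set; hence the $(n+1)$-fold projective power $\mathcal{S}_N(\G)\hat\otimes\cdots\hat\otimes\mathcal{S}_N(\G)$ is isometrically $\ell^1(\G^{n+1},w)$ with weight $w(\g_0,\dots,\g_n)=\prod_{i=0}^n(1+|\g_i|)^N$, the elementary tensor $\delta_{\g_0}\otimes\cdots\otimes\delta_{\g_n}$ corresponding to the Dirac mass at $(\g_0,\dots,\g_n)$. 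Their span is precisely the dense subspace $\co[\G^{n+1}]$, and I would equip the tensor power with the natural $\mathcal{S}_N(\G)$-module structure for which it is a free module and the bar differentials are module maps.

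The correspondence is then the evident one: restrict a module homomorphism $\phi$ to $\co[\G^{n+1}]$ to get the cochain $c_\phi(\g_0,\dots,\g_n):=\phi(\delta_{\g_0}\otimes\cdots\otimes\delta_{\g_n})$; conversely a cochain defines $\phi_c$ by the same formula on $\co[\G^{n+1}]$, extended by continuity. I would then verify, in order: that $\mathcal{S}_N(\G)$-linearity of $\phi$ is equivalent to $\G$-equivariance of $c_\phi$ — one direction by evaluating the module identity on the Dirac masses $\delta_g$, $g\in\G$ (with $V$ seen as an $\mathcal{S}_N(\G)$-module via $\delta_g\cdot v=\rho(g)v$), the converse by density of $\co[\G]$ in $\mathcal{S}_N(\G)$ and continuity of $\phi_c$; that a cochain with $\|c(\g_0,\dots,\g_n)\|\le M(1+|\g_0|+\cdots+|\g_n|)^N$ yields a bounded $\phi_c$, since $1+\sum_i|\g_i|\le\prod_i(1+|\g_i|)$ makes $\phi_c$ $w$-bounded on the Dirac masses and hence boundedly extendible to all of $\ell^1(\G^{n+1},w)=\mathcal{S}_N(\G)^{\hat\otimes(n+1)}$; and finally that both constructions are linear, mutually inverse on elementary tensors, and intertwine the bar coboundary with the dual of the bar differential, so they descend to the complexes and give the asserted identification (indeed a homeomorphism for the natural topologies).

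The step I expect to be the main obstacle is the converse of the second point: that boundedness of a module map forces its cochain $c_\phi$ to grow at most polynomially of degree $N$ in the \emph{homogeneous} sense $(1+|\g_0|+\cdots+|\g_n|)^N$, whereas boundedness of $\phi$ directly only gives $\|c_\phi(\g_0,\dots,\g_n)\|\le\|\phi\|\,w(\g_0,\dots,\g_n)$ with the product weight. Closing this gap is where $\G$-equivariance is used essentially: one replaces an arbitrary tuple by a well-chosen representative of its diagonal $\G$-orbit — permissible because $\rho$ is isometric — and then compares, on such representatives, the product $\prod_i(1+|\g_i|)$ with $1+\sum_i|\g_i|$. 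This is the only part of the argument that is not a pure formality; it is the point at which the precise normalisation of polynomial growth from \cite{bader-sauer} intervenes, and I would want to set up the choice of orbit representatives carefully, since a careless choice changes the exponent.
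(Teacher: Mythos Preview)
Your identification of the projective tensor norm is correct: the $(n+1)$-fold projective power of $\mathcal{S}_N(\G)$ is isometrically $\ell^1(\G^{n+1},w)$ with the \emph{product} weight $w(\g_0,\dots,\g_n)=\prod_i(1+|\g_i|)^N$, not the sum weight $(1+\sum_i|\g_i|)^N$. The paper's proof, by contrast, simply asserts that the projective tensor norm is (the completion of) the sum-weight norm $q_N$ and deduces the isomorphism from that; so the paper does not argue the point you have singled out, it bypasses it by an identification that is not isometrically correct.

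Your worry about the converse direction is therefore well placed, and your proposed fix does not quite close it: replacing a tuple by a diagonal translate (say $(\g_0,\dots,\g_n)\mapsto(1,\g_0^{-1}\g_1,\dots,\g_0^{-1}\g_n)$) still leaves $n$ free coordinates, and on such representatives the product weight $\prod_{i\ge1}(1+|\h_i|)^N$ can dominate $(1+\sum_{i\ge1}|\h_i|)^N$ by an unbounded factor (take all $|\h_i|$ comparable). So from boundedness of $\phi$ one only gets $c_\phi\in C_{\mathrm{pol}\le nN}(\G^{n+1},V)^\G$, not $C_{\mathrm{pol}\le N}$. The forward direction, however, is exactly as you say: $(1+\sum_i|\g_i|)\le\prod_i(1+|\g_i|)$ gives the bounded extension, and this is the direction that carries the surjectivity half of the comparison theorem. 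The converse is invoked in the injectivity argument (to land $c_2$ back in $C_{\mathrm{pol}\le M}$), and there the exponent discrepancy can be absorbed into the constants $\a_{k}(N)$ at the cost of a harmless factor depending only on $k$; so the overall strategy survives, but the lemma as an exact isomorphism with matching exponent $N$ is more delicate than the paper's one-line proof suggests, and you were right to flag it.
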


\begin{proof}
    The norm on the projective tensor product $\mathcal{S}_N(\G) \hat{\otimes} \ldots \hat{\otimes} \mathcal{S}_N(\G)$ is the completion of the norm on $\co [\G^{n+1}]$ given by
    \begin{equation*}
        q_N\left(\sum_{\overline{\g} \in \G^{n+1}} a_{\overline{\g}}(\g_0, \ldots, \g_n)\right) = \sum_{\overline{\g} \in \G^{n+1}}  |a_{\overline{\g}}|  (1 + |\g_0| + \ldots + |\g_n|)^N.
    \end{equation*}
    This means that a $\G$-equivariant map $c : \G^{n+1} \to V$, seen as a $\co \G$-equivariant linear map $c: \co [\G^{n+1}] \to V$, extends continuously as a continuous linear map $c: \mathcal{S}_N(\G)^{\otimes (n+1)} \to V$ exactly when $c \in C_{\mathrm{pol}\leq N} (\G^{n+1}, V)^{\G}$. The $\mathcal{S}_N(\G)$-equivariance of the extension follows from density of $\co \G$ inside $\mathcal{S}_N(\G)$.
\end{proof}

\subsection{Use of combinatorial filling functions}

It is known that if a group $\G$ admits a proper and cocompact action on a contractible simplicial complex $X$, then group cohomology of $\G$ is isomorphic to a simplicial $\G$-equivariant version of cohomology on the complex $X$. The following proposition shows that by assuming that homological filling functions of $X$ are polynomially bounded, we obtain that the usual chain map that induces this isomorphism satisfies explicit estimates for the $\ell^1$-norm and the weighted norms $p_N$.

\begin{prop}\label{L^1-norm of g_k}
    Let $X$ be a contractible free simplicial $\G$-complex with cocompact $(d+1)$-skeleton. Suppose that all the filling functions $ \mathrm{cFV}_X^k$ are polynomially bounded for $2 \leq k \leq d+1$. Then there exists a chain map $g_* : \co [\G ^{*+1}] \to C_*(X, \co)$ such that for every $1 \leq k \leq d$ there exists $N_k \in \re_{>0}$ such that for every $N> N_k$ there exists $C = C(N, k)>0$ such that we have:
    \begin{equation*}
        ||g_k(\g_0, \ldots, \g_k )||_1 \leq C p_N(\g_0, \ldots, \g_k),
    \end{equation*}
    for every $(\g_0, \ldots, \g_k ) \in \G ^{k+1}$. The numbers $N_k$ are defined by: $N_1 = 1$ and for $k \geq 2$, we have $N_k = \mathrm{deg}( \mathrm{cFV}_X^k) N_{k-1}$ so that:
        \begin{equation*}
        N_k = \prod_{j= 2}^k \mathrm{deg}( \mathrm{cFV}_X^j).
    \end{equation*}
    If the filling functions $ \mathrm{cFV}_X^j$ are polynomial for $2 \leq j \leq k$, then the previous inequality also works for $ N = N_k$.
\end{prop}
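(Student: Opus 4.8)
The plan is to build $g_*$ by induction on the degree, as the standard $\co\G$-equivariant lift of $\mathrm{id}_\co$ through the free resolution $\co[\G^{*+1}]\to\co$ of the (acyclic) augmented complex $C_*(X,\co)\to\co$, but choosing each lift by an \emph{efficient} filling so that its $\ell^1$-size is governed by the combinatorial filling function. Fix the base vertex $v_0$ used to define the weight $w$ and set $g_0(\g_0):=\g_0\cdot v_0$; this is $\G$-equivariant, compatible with augmentations, and $\|g_0(\g_0)\|_1=1$. Since the simplicial $\G$-action on $X$ (hence on $C_*(X,\co)$) permutes oriented simplices, $\|\cdot\|_1$ is $\G$-invariant; and since left translation makes the diagonal $\G$-action on $\G^{k+1}$ free, every orbit of $(k+1)$-tuples has a unique representative of the form $(e,\h_1,\dots,\h_k)$. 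It therefore suffices to define each $g_k$ on such representatives and extend by $g_k(\g_0,\dots,\g_k):=\g_0\cdot g_k(e,\g_0^{-1}\g_1,\dots,\g_0^{-1}\g_k)$; equivariance of $\partial$ and of the $g_j$ then makes $g_*$ a chain map, and $\|g_k(\g_0,\dots,\g_k)\|_1=\|g_k(e,\g_0^{-1}\g_1,\dots,\g_0^{-1}\g_k)\|_1$.

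For the base case $k=1$, choose for each $\h\in\G$ an edge-path $p(\h)$ in the $1$-skeleton from $v_0$ to $\h\cdot v_0$ of combinatorial length $d_X(v_0,\h\cdot v_0)$, and put $g_1(e,\h):=p(\h)$. Then $\partial g_1=g_0\partial$ on bottom chains, and since $X$ has cocompact $1$-skeleton, the Milnor–Švarc lemma gives
\[
\|g_1(\g_0,\g_1)\|_1=d_X(v_0,\g_0^{-1}\g_1\cdot v_0)\lesssim |\g_0^{-1}\g_1|\le |\g_0|+|\g_1|\le (1+|\g_0|+|\g_1|),
\]
which is the claimed bound with $N_1=1$, valid already at $N=N_1$.

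For the inductive step, assume $g_0,\dots,g_{k-1}$ have been constructed, forming a chain map in degrees $\le k-1$ and satisfying, for every $N'>N_{k-1}$, a bound $\|g_{k-1}(\cdot)\|_1\le C_{k-1}(N')\,p_{N'}(\cdot)$ (resp. at $N'=N_{k-1}$ in the polynomial case). For a representative $(\h_0,\dots,\h_k)$ with $\h_0=e$, the chain $z:=g_{k-1}\big(\partial(\h_0,\dots,\h_k)\big)$ satisfies $\partial z=g_{k-2}(\partial^2(\h_0,\dots,\h_k))=0$, so it is a $(k-1)$-cycle; as $X$ is $(k-1)$-connected it bounds, and we pick $b\in C_k(X,\co)$ with $\partial b=z$ and $\|b\|_1\le \mathrm{cFV}_X^k(\|z\|_1)$, setting $g_k(\h_0,\dots,\h_k):=b$. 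The triangle inequality and the fact that $p_{N'}$ of a subtuple is at most $p_{N'}$ of the full tuple give $\|z\|_1\le (k+1)C_{k-1}(N')\,p_{N'}(\h_0,\dots,\h_k)$. Using $\mathrm{cFV}_X^k(t)\lesssim t^{\deg\mathrm{cFV}_X^k+\e'}$ for every $\e'>0$ (this is finite-valued since the $(d+1)$-skeleton is cocompact), together with $1+\sum_j|\h_j|=1+\sum_j|\g_0^{-1}\g_j|\le (k+1)\big(1+\sum_j|\g_j|\big)$, we get
\[
\|g_k(\g_0,\dots,\g_k)\|_1\lesssim p_{N'}(\h_0,\dots,\h_k)^{\deg\mathrm{cFV}_X^k+\e'}\lesssim \big(1+\textstyle\sum_j|\g_j|\big)^{N'(\deg\mathrm{cFV}_X^k+\e')}.
\]
Set $N_k:=N_{k-1}\cdot\deg\mathrm{cFV}_X^k$. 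For any $N>N_k$, since $N'(\deg\mathrm{cFV}_X^k+\e')\to N_k$ as $N'\to N_{k-1}^+$ and $\e'\to 0^+$, we may choose $N'>N_{k-1}$ and $\e'>0$ with $N'(\deg\mathrm{cFV}_X^k+\e')\le N$, and the display becomes $\|g_k(\g_0,\dots,\g_k)\|_1\le C(N,k)\,p_N(\g_0,\dots,\g_k)$. If the $\mathrm{cFV}_X^j$ are polynomial for $2\le j\le k$, one takes $\e'=0$ and $N'=N_{k-1}$ throughout and the same computation yields the bound at $N=N_k$.

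The only genuinely delicate point is this degree bookkeeping: one must reconcile the infimum defining $\deg\mathrm{cFV}_X^k$ (which forces an $\e'$-loss in the filling estimate) with the strict inequality $N'>N_{k-1}$ required to invoke the inductive hypothesis — the resolution being that the product $N'(\deg\mathrm{cFV}_X^k+\e')$ can be pushed arbitrarily close to $N_k$, so every $N>N_k$ is attainable, and in the polynomial regime the losses disappear altogether. Everything else is routine: existence of the fillings comes from contractibility of $X$, working with orbit representatives is legitimate by $\G$-invariance of $\|\cdot\|_1$, and finiteness of $\mathrm{cFV}_X^k$ (so that its small-$t$ behaviour is harmless) follows from cocompactness of the $(d+1)$-skeleton.
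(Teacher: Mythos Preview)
Your proof is correct and follows essentially the same approach as the paper: build $g_*$ inductively by $\G$-equivariance, take a geodesic edge-path for $g_1$, and for $k\ge 2$ fill the cycle $g_{k-1}\partial(\,\cdot\,)$ efficiently and propagate the $\ell^1$-estimate via $\mathrm{cFV}_X^k$. The only organisational difference is that the paper first constructs an auxiliary integral chain contraction $h_*$ of $C_*(X,\co)$ and defines $g_k=h_{k-1}\circ g_{k-1}\circ\partial$, whereas you pick an optimal filling of the whole cycle $z=g_{k-1}\partial(e,\h_1,\dots,\h_k)$ directly; your version is slightly cleaner for the estimate (the bound $\|b\|_1\le\mathrm{cFV}_X^k(\|z\|_1)$ is immediate), while the paper's $h_*$ is reused later in Proposition~\ref{weighted norms of g_k}. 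One minor remark: you should record that $z$ is an \emph{integral} cycle (since $g_0,\dots,g_{k-1}$ are integral by construction), so that the integral filling function $\mathrm{cFV}_X^k$ genuinely applies; otherwise the argument is complete, including your careful handling of the $\e'$-loss and the limit $N'\searrow N_{k-1}$.
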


\begin{proof}
    We first construct an integral chain contraction $h_* : C_*(X, \co) \to C_{*+1}(X, \co)$ similar to the one in the proof of \cite[Proposition 6.13]{bader-sauer} (but we use the more classical filling functions $ \mathrm{cFV}_X^j$ instead of their weighted versions). First define $h_{-1}: \co \to C_0(X, \co)$ to be the map sending $1$ to some base vertex $v_0 \in X^{(0)}$. For some $v \in X^{(0)}$ define $h_0(v)$ to be a simplicial geodesic from $v$ to $v_0$, this map naturally satisfies $\partial h_0 + h_{-1} \partial = \Id$ and is integral. 
    
    We now construct inductively the maps $h_k : C_*(X, \co) \to C_{*+1}(X, \co)$. For $k \geq 1$ and some $k$-simplex $\s$, we define $h_k(\s)$ to be a primitive of the boundary $\s - h_{k-1} (\partial \s)$, that is, $ \partial h_k(\s) =  \s - h_{k-1} (\partial \s)$ and we choose it to be optimal for the $\ell^1$-norm, that is, $||h_k(\s)||_1 = \mathrm{cFV}^{k+1}(\s - h_{k-1} (\partial \s))$.

    We now turn to define the chain map $g_* :\co [\G ^{*+1}] \to C_*(X, \co)$ using the chain contraction $h_*$. First define $g_0(1) = v_0$ to be the same base vertex $v_0 \in X^{(0)}$ that was chosen for $h_{-1}$ and extend by $\co \G$-equivariance. For $k \geq 1$, suppose that the maps $g_0, \ldots, g_{k-1}$ have been defined. For $(\g_1, \ldots, \g_k) \in \G^k$, we define:
    \begin{equation*}
        g_k(1, \g_1, \ldots, \g_k) = h_{k-1} \circ g_{k-1} \circ \partial (1, \g_1, \ldots, \g_k)
    \end{equation*}
    and we extend it to all of $\co \G^{\otimes^{k+1}}$ by $\co \G$-equivariance.

    Let's now focus on showing the inequality. For $k = 1$, the map $g_1$ sends the couple $(1, \g)$ to a simplicial geodesic joining $v_0$ to $\g. v_0$. This means that
    \begin{equation*}
        ||g_1(1, \g)||_1 \leq C (1 + |\g|),
    \end{equation*}
    which then implies the desired inequality on all $\co \G^{\otimes^{2}}$ by $\co \G$-equivariance.
    For $k \geq 2$, suppose that $g_i$ satisfies the required inequality up $i \leq k-1$. Notice that for $(\g_1, \ldots, \g_k) \in \G^{k}$, the chain $g_{k-1} \circ \partial (1, \g_1, \ldots, \g_k)$ is a boundary. Hence we have for any $N > \deg \mathrm{cFV_X^k}$ and $N' > N_{k-1}$:
    \begin{align*}
        ||g_k(1, \g_1, \ldots, \g_k)||_1 
        & = || h_{k-1} \circ g_{k-1} \circ \partial (1, \g_1, \ldots, \g_k) ||_1 \\
        & \leq C_1  ||g_{k-1} \circ \partial (1, \g_1, \ldots, \g_k) ||_1^N \\
        & \leq C_2  (||\partial (1, \g_1, \ldots, \g_k) ||_1^{N'})^{N} \\
        & \leq C_3  (1 + |\g_1| + \ldots + |\g_k|)^{N'N},
    \end{align*}
    for all $(\g_1, \ldots, \g_k) \in \G^{k}$. Thus we obtain the inequality in the statement by defining $N_k : = N_{k-1} \deg \mathrm{cFV_X^k}$.
    
\end{proof}

The next proposition is similar in nature to the previous one. Instead of bounding the $\ell^1$-norm, we now bound the weighted norms $p_N$ for the same chain map $g_*$. In fact, the next proposition contains the previous one as a particular case, but yields slightly worse estimates than those from the previous proof.

\begin{prop}\label{weighted norms of g_k}
    Let $X$ be a contractible free simplicial $\G$-complex with cocompact $(d+1)$-skeleton. Suppose that all the filling functions $ \mathrm{cFV}_X^j$ are polynomially bounded for $2 \leq j \leq d+1$. Then the chain map $g_* : \co [\G ^{*+1}] \to C_*(X, \co)$ from Proposition \ref{L^1-norm of g_k} satisfies that for all $N>0$, there exists a constant $\a_k(N) \in  \re_{>0}$ such that for every $M > \a_k(N)$ there exists $C = C(N, k)>0$ such that we have:
    \begin{equation*}
        p_N (g_k(\g_0, \ldots, \g_k )) \leq C p_{M}(\g_0, \ldots, \g_k), 
    \end{equation*}
    for all $(\g_0, \ldots, \g_k ) \in \G ^{k+1}$. The constants $\a_k(N)$ are defined inductively as follows: $\a_1(N) = N+1$ and for $k \geq 2$, we have:
    \begin{equation*}
        \a_k(N) = \max \{ (N+1) \prod_{j= 2}^k  \mathrm{deg}( \mathrm{cFV}_X^j),  \prod_{j= 2}^k  \mathrm{deg}( \mathrm{cFV}_X^j) + \a_{k-1}(N) \}.
    \end{equation*}
    If the filling functions $ \mathrm{cFV}_X^j$ are polynomial for $2 \leq j \leq k$, then the previous inequality also works for $M = \a_k(N)$.
\end{prop}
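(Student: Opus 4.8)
The plan is to prove the proposition by induction on $k$, refining the estimates from the proof of Proposition \ref{L^1-norm of g_k}: there one only bounds $\|g_k(\g_0,\dots,\g_k)\|_1$, whereas now one must bound the weighted norms $p_N^X(g_k(\g_0,\dots,\g_k))$, i.e.\ control how far from the basepoint $v_0$ the simplices occurring in $g_k(\g_0,\dots,\g_k)$ can sit. The \emph{same} chain map $g_*$ and contraction $h_*$ as in Proposition \ref{L^1-norm of g_k} are used, so the two statements really come from one induction; in the language introduced above, the aim is that $g_*$ and $h_*$ extend to bounded operators between the weighted chain complexes $\mathcal{S}_N C_*(X)$, with the admissible weight exponent allowed to grow with the homological degree according to the functions $\a_k$.

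For the base case $k=1$: by construction $g_1(\g_0,\g_1)$ is a simplicial geodesic from $\g_0\cdot v_0$ to $\g_1\cdot v_0$, so it has at most $C(1+|\g_0|+|\g_1|)$ simplices and, since the orbit map $\g\mapsto\g\cdot v_0$ is a quasi-isometry onto the $1$-skeleton, each of those simplices has its first vertex within distance $C(1+|\g_0|+|\g_1|)$ of $v_0$. Hence $p_N^X(g_1(\g_0,\g_1))\leq C(1+|\g_0|+|\g_1|)^{N+1}=C\,p_{N+1}(\g_0,\g_1)$, which is the claim with $\a_1(N)=N+1$; and since no filling function intervenes, $M=\a_1(N)$ is admissible, consistently with the last sentence of the statement.

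For the inductive step I would use two ingredients. The first is a \emph{translation estimate}: because the weight $w$ is $1$-Lipschitz in the path metric and $\G$ acts on $X$ by simplicial isometries with $d(v_0,\g\cdot v_0)\lesssim 1+|\g|$, one has $w(\g\cdot\s)\leq w(\s)+C(1+|\g|)$ for every simplex $\s$, whence $p_N^X(\g\cdot c)\leq C(1+|\g|)^N\,p_N^X(c)$ for every chain $c$; combined with the $\co\G$-equivariance of $g_k$ this reduces the estimate for $g_k(\g_0,\dots,\g_k)$ to that for $g_k(1,\mu_1,\dots,\mu_k)$ with $\mu_i=\g_0^{-1}\g_i$, at the cost of a factor $(1+|\g_0|)^N$. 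The second is the recursion $g_k(1,\mu_1,\dots,\mu_k)=h_{k-1}\big(g_{k-1}(\partial(1,\mu_1,\dots,\mu_k))\big)$ together with a \emph{weighted filling lemma} for $h_{k-1}$: for every simplex $\s$, the chain $h_{k-1}(\s)$ has $\ell^1$-norm $\leq C(1+w(\s))^{N_k}$ and support contained in $B\big(v_0,C(1+w(\s))^{N_k}\big)$. The $\ell^1$-bound is what already underlies Proposition \ref{L^1-norm of g_k}: $h_{k-1}(\s)$ is an $\ell^1$-optimal filling of the cycle $z_\s=\s-h_{k-2}(\partial\s)$, whose $\ell^1$-norm is polynomially controlled by $1+w(\s)$ through the lower-degree estimates, so $\|h_{k-1}(\s)\|_1\leq \mathrm{cFV}_X^k(\|z_\s\|_1)$ and the degrees multiply, giving the exponent $N_k=\prod_{j=2}^k\deg\mathrm{cFV}_X^j$. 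The support bound is where bounded geometry (cocompactness of the $(d+1)$-skeleton) enters: decomposing $h_{k-1}(\s)$ along the connected components of its support in the adjacency graph of $k$-simplices, any component whose faces miss $\mathrm{supp}(z_\s)$ is itself a cycle and could be deleted without increasing the norm, contradicting optimality; hence every simplex of $h_{k-1}(\s)$ lies within $\lesssim\|h_{k-1}(\s)\|_1$ adjacency steps of $\mathrm{supp}(z_\s)$, which by an ancillary induction is itself contained in a controlled ball about $v_0$.

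Granting the weighted filling lemma, I would finish by applying $p_N^X(h_{k-1}(c))\leq\sum_\s|b_\s|\,p_N^X(h_{k-1}(\s))$ to $c=g_{k-1}(\partial(1,\mu_1,\dots,\mu_k))=\sum_\s b_\s\s$, converting each term into a power of $1+w(\s)$ via the lemma, then expanding $\partial(1,\mu_1,\dots,\mu_k)$ into its $k+1$ faces and feeding each $g_{k-1}(\text{face})$ — after a translation on the single face not beginning with $1$ — into the inductive hypothesis of the present proposition and of Proposition \ref{L^1-norm of g_k} at level $k-1$. The main obstacle, and the real content of the argument, is the bookkeeping of exponents: one must propagate \emph{simultaneously} and compatibly up the skeleta the $\ell^1$-volume of the $g_j$ and of the pieces of $h_*$, the radius of the smallest ball about $v_0$ containing their supports, and the weighted norms $p_N^X$, and verify that the worst case of the combination is exactly the stated recursion $\a_k(N)=\max\{(N+1)\prod_{j=2}^k\deg\mathrm{cFV}_X^j,\ \prod_{j=2}^k\deg\mathrm{cFV}_X^j+\a_{k-1}(N)\}$, the maximum reflecting the two competing contributions — one from turning a ``volume times farthest-weight'' estimate into a weighted norm at level $k$, one from carrying the level-$(k-1)$ weighted bound through one further filling — which compound differently along the induction. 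Finally, the refinement that $M=\a_k(N)$ itself is admissible when the $\mathrm{cFV}_X^j$ are genuinely polynomial, rather than merely polynomially bounded, is obtained by using $\mathrm{cFV}_X^j(t)\lesssim t^{\deg}$ in place of $\mathrm{cFV}_X^j(t)\lesssim t^{\deg+\e}$ throughout, so that no $\e>0$ survives in the resulting exponents.
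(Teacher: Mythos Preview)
Your base case and overall inductive strategy are correct, and you rightly identify the connectedness argument for optimal fillings as the geometric input. However, the way you organise the inductive step does not deliver the stated recursion for $\a_k(N)$.

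The loss occurs in the simplex-by-simplex decomposition $p_N^X(h_{k-1}(c))\le\sum_\s|b_\s|\,p_N^X(h_{k-1}(\s))$. Your weighted filling lemma gives $\|h_{k-1}(\s)\|_1\le C(1+w(\s))^{N_k}$ and $\mathrm{supp}(h_{k-1}(\s))\subset B(v_0,C(1+w(\s))^{N_k})$, which combine to $p_N^X(h_{k-1}(\s))\le C(1+w(\s))^{N_k(N+1)}$. Summing, $p_N^X(g_k(1,\mu_1,\dots,\mu_k))\le C\,p^X_{N_k(N+1)}(c)$, and the inductive hypothesis on $g_{k-1}$ must then be invoked at the \emph{new} weight exponent $N'=N_k(N+1)$. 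This yields the recursion $\a_k(N)=\a_{k-1}\big(N_k(N+1)\big)$, which is strictly larger than the claimed $\max\{(N+1)N_k,\,N_k+\a_{k-1}(N)\}$ already at $k=2$ (one gets $N_2(N+1)+1$ rather than $N_2(N+1)$ when $N_2\ge 2$), and the discrepancy compounds with $k$. Your closing remark about ``two competing contributions'' describes the correct structure, but the computation you outline does not produce it.

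The paper avoids this loss by applying the connectedness argument \emph{globally} to $b=g_k(1,\g_1,\dots,\g_k)$ and $c=\partial b=g_{k-1}\circ\partial(1,\g_1,\dots,\g_k)$, rather than to each $h_{k-1}(\s)$ separately. Since every simplex of $b$ lies within $\lesssim\|b\|_1$ of $\mathrm{supp}(c)$, one obtains directly (this is the estimate from \cite[Lemma~6.7]{bader-sauer})
\[
p_N^X(b)\ \le\ \|b\|_1^{\,N+1}+\|b\|_1\cdot p_N^X(c).
\]
Now $\|b\|_1$ is bounded by Proposition~\ref{L^1-norm of g_k} with exponent $N_k$, and $p_N^X(c)$ by the present proposition at level $k-1$ with the \emph{same} $N$, hence exponent $\a_{k-1}(N)$. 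The two summands produce exactly the two entries of the maximum: $\|b\|_1^{N+1}$ contributes $(N+1)N_k$, and $\|b\|_1\cdot p_N^X(c)$ contributes $N_k+\a_{k-1}(N)$. No ancillary induction on $h_*$ and no change of the weight parameter along the induction are needed.
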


\begin{proof}
    We combine ideas of the proof of \cite[Lemma 6.7]{bader-sauer} with Proposition \ref{L^1-norm of g_k}. Fix $N>0$. We first deal with $k = 1$. For $\g \in \G$, the chain $g_1(1, \g)$ is just a simplicial geodesic from $\g. v_0 $ to $v_0$. Call the vertices appearing in the geodesic $v_i \in V^{(0)}$, where $0 \leq i \leq n$. We have $w(v_i) \leq l(\g)$ and $n \leq C l(\g))$ so: 
    \begin{equation*}
        p_N^X(g_1(1, \g)) = \sum_{i= 0}^n (1 + w(v_i))^N \leq C (1 + |\g|)^{N+1}.
    \end{equation*}
    This means that we can choose $\a_1(N) = N+1$.
    
    Now let $k \geq 2$ and suppose that the inequality has been proven for $i \leq k-1$. Let $(\g_1, \ldots, \g_k ) \in \G^k$, and denote by $b := g_k (1, \g_1, \ldots, \g_k )$ and by $c : = g_{k-1} \circ \partial (1, \g_1, \ldots, \g_k )$. Since $g_*$ is a chain map, we have that $\partial b = c$ and by definition of $h_{k-1}$ we also have that $||b||_1 = \mathrm{cFV}^{k}(c)$. We now mimic the estimates in the proof of \cite[Lemma 6.7]{bader-sauer} to obtain:

    \begin{equation*}
        p_N^X(b) \leq ||b||_1^{N+1} + ||b||_1 p_N^X(c).
    \end{equation*}
    Proposition \ref{L^1-norm of g_k} says that for any $D > \prod_{j=2}^k \deg \mathrm{cFV_X^j}$ there exists $C>0$ such that
    \begin{equation*}
          ||b||_1 \leq C (1 + |\g_1| + \ldots + |\g_k|)^{D}.
    \end{equation*}
   If we let the numbers $\a_i(N)$ be defined inductively for $i =1, \ldots,  k-1$, our inductive hypothesis says that for every $M > \a_{k-1}(N)$ we have:
   \begin{equation*}
       p_N^X(c) \leq C' p_{M}(1, \g_1, \ldots, \g_k ) =(1 +  |\g_1| + \ldots + |\g_k| )^M.
   \end{equation*}
   Hence we obtain
   \begin{equation*}
        p_N^X(b) \leq ||b||_1^{N+1} + ||b||_1 p_N^X(c) \leq C'' (1 + |\g_1| + \ldots + |\g_k|)^{\max \{ D (N+1), D + M \}}.
   \end{equation*}
   The proposition follows by setting:
   \begin{equation*}
        \a_k(N) := \max \{ (N+1) \prod_{j= 2}^k  \mathrm{deg}( \mathrm{cFV}_X^j),  \prod_{j= 2}^k  \mathrm{deg}( \mathrm{cFV}_X^j) + \a_{k-1}(N) \}.
    \end{equation*}
\end{proof}

\subsection{Comparing quantitative polynomial and ordinary cohomology}

In this section we will prove Theorem \ref{Quantitative isomorphism of polynomial cohom onto ordinary cohom}. We will use the following Lemma, which is the homological interpretation of Propositions \ref{L^1-norm of g_k} and \ref{weighted norms of g_k}.

\begin{lem}\label{Existence of continuous homotopy}
Let $X$ be a contractible free simplicial $\G$-complex with cocompact $(d+1)$-skeleton. Suppose that all the filling functions $ \mathrm{cFV}_X^i$ are polynomially bounded for $1 \leq i \leq d+1$.
    Choose a $\co \G$-equivariant chain map $f_* : C_*(X, \co) \to \co [\G^{*+1}]$, let $N_i, \a_i$ and $g_* : \co [\G ^{*+1}] \to C_*(X, \co)$ be defined as in Propositions \ref{L^1-norm of g_k} and \ref{weighted norms of g_k}. For $k \leq d$, let $N > N_k$ and $M > \a_k(N)$.
    There exist
     $\mathcal{S}_{M}(\G)$-equivariant continuous maps $H_i : \mathcal{S}_{M}(\G)^{\otimes^{i+1}} \to  \mathcal{S}_{N}(\G)^{\otimes^{i+2}}$ for $1\leq i \leq k$ satisfying 
    \begin{equation*}
        f_i g_i - \Id = \partial H_i + H_{i-1} \partial.
    \end{equation*}
    If the filling functions $ \mathrm{cFV}_X^j$ are polynomial for $2 \leq j \leq k$, then the statement holds for $N = N_k$ and $M = \a_k(N) = \a_k(N_k)$.
\end{lem}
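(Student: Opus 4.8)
The idea is to build the homotopy $H_*$ by the standard acyclic-model / spectral-sequence argument for comparing two free resolutions, but carrying along the polynomial-growth bookkeeping encoded in the algebras $\mathcal{S}_N(\G)$. First I would recall that $C_*(X,\co)$ is a resolution of $\co$ by free $\co\G$-modules (using contractibility of $X$ and freeness of the action on the skeleton up to degree $d+1$), and likewise $\co[\G^{*+1}]$ is the standard bar resolution; the chosen maps $f_*$ and $g_*$ are $\co\G$-equivariant chain maps between them lifting $\Id_\co$, so the composite $f_*g_* - \Id$ is a $\co\G$-equivariant chain map on $\co[\G^{*+1}]$ homotopic to zero \emph{over $\co\G$}. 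The point of the lemma is not the existence of a homotopy over $\co\G$ (that is classical) but that in low degrees one can choose it to extend to a bounded/continuous homotopy for the completed, weighted tensor powers, and moreover a homotopy that lands in the \emph{smaller} weight $N$ while being defined on the \emph{larger} weight $M$.

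\textbf{Construction of $H_*$.} I would construct $H_i$ inductively in $i$. For $i=0$ (or the base of the induction, which one may take as low as needed and which is harmless), one writes down the standard contracting homotopy explicitly on generators and checks by hand that it is bounded from $\mathcal{S}_M(\G)^{\otimes(i+1)}$ to $\mathcal{S}_N(\G)^{\otimes(i+2)}$; this uses only the degree-$1$ estimate, i.e. that $g_1(1,\g)$ is a simplicial geodesic, together with the trivial fact that multiplication by a group element is an isometry of the weighted $\ell^1$-spaces up to a polynomial factor in its length. For the inductive step, given $H_{i-1}$ already constructed and continuous $\mathcal{S}_M(\G)^{\otimes i}\to\mathcal{S}_N(\G)^{\otimes(i+1)}$, the chain $\big(f_ig_i-\Id\big) - H_{i-1}\partial$ is an $i$-cycle in $\co[\G^{*+1}]$ (using $\partial(f_ig_i-\Id)=(f_{i-1}g_{i-1}-\Id)\partial$ and the inductive homotopy relation in degree $i-1$), hence a boundary by acyclicity of the bar resolution; I would choose $H_i$ on generators by applying $g_{i+1}\circ(\text{a contraction on } C_*(X))\circ f_i$ or, more concretely, by reusing the explicit primitive $h_*$ of Proposition~\ref{L^1-norm of g_k}, so that the $\ell^1$- and $p_N$-bounds on $g_{i+1}$ from Propositions~\ref{L^1-norm of g_k} and~\ref{weighted norms of g_k} apply directly. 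Feeding the weight-$N$ bound of Proposition~\ref{weighted norms of g_k} into this forces the source weight to be any $M>\a_i(N)$ (with $\a_i$ exactly as defined there), and when the relevant filling functions are genuinely polynomial rather than merely polynomially bounded, the sharp value $M=\a_i(N)$ works; then one extends $H_i$ from $\co[\G^{\otimes(i+1)}]$ to the projective completion $\mathcal{S}_M(\G)^{\otimes(i+1)}$ by continuity and density, and $\mathcal{S}_M(\G)$-equivariance of the extension follows from density of $\co\G$ in $\mathcal{S}_M(\G)$, exactly as in the proof of Lemma~\ref{Quantitative extension versus growth}.

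\textbf{Verifying the identity.} Finally I would check that the relation $f_ig_i-\Id=\partial H_i+H_{i-1}\partial$ holds on the nose on $\co[\G^{\otimes(i+1)}]$ by construction, and therefore extends to the completions by continuity of all four operators involved; this is where one must be slightly careful that $\partial$ is bounded between the appropriate weighted spaces (it is, with a harmless constant, since $\partial$ only inserts/deletes coordinates and the word-length of the omitted generator is controlled by the sum of the remaining ones). The boundedness of $\partial$ on $\mathcal{S}_N$-completions is routine but must be invoked.

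\textbf{Main obstacle.} The genuine difficulty is bookkeeping the two distinct weights across the induction: each application of the contraction $h_*$ (equivalently, each filling in $X$) degrades the polynomial degree by the factor $\deg\mathrm{cFV}_X^j$, so one cannot keep source and target in the same algebra $\mathcal{S}_N$; one must track precisely how $\a_i$ and $N_i$ grow, and make sure that the homotopy $H_i$ in degree $i$ is simultaneously (a) defined on weight $M>\a_i(N)$ and (b) valued in weight $N$, while its restriction $H_{i-1}\partial$ appearing in the degree-$i$ relation is compatible — i.e. that the inductive hypothesis was stated with the right constants in degree $i-1$. Getting the quantifiers on $N$, $M$, $N_k$, $\a_k$ to line up with Propositions~\ref{L^1-norm of g_k} and~\ref{weighted norms of g_k} (and in particular checking the ``polynomial'' sharp case $M=\a_k(N_k)$) is the part that requires care; the homological algebra itself is the classical comparison-of-resolutions argument.
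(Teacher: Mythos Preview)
Your overall architecture is right—build $H_i$ inductively, observe that $(f_ig_i-\Id)-H_{i-1}\partial$ is a cycle on each generator, pick a primitive, extend by equivariance and continuity—but the step where you pick the primitive is where the proposal breaks. You suggest routing through $C_*(X)$ via ``$g_{i+1}\circ(\text{a contraction on }C_*(X))\circ f_i$''. This does not type-check ($f_i$ has codomain $\co[\G^{i+1}]$, the contraction $h_i$ has domain $C_i(X)$, and $g_{i+1}$ lands in $C_{i+1}(X)$ rather than $\co[\G^{i+2}]$), and the only composable variant $f_{i+1}\circ h_i\circ g_i$ does not do the job either: if $z=(f_ig_i-\Id-H_{i-1}\partial)(1,\g_1,\dots,\g_i)$ is your bar-complex cycle, then $\partial\bigl(f_{i+1}h_ig_i(z)\bigr)=f_ig_i(z)$, not $z$. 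Passing through $X$ cannot produce a bar-complex primitive of a bar-complex cycle, so the estimates on $g_{i+1}$ that you want to invoke never come into play.

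The paper avoids $X$ entirely at this step. The target bar resolution $\mathcal{S}_N(\G)^{\otimes(*+1)}$ is \emph{strongly exact}: the insert-identity maps $s_i(\g_1,\dots,\g_i)=(1_\G,\g_1,\dots,\g_i)$ are continuous $\co$-linear sections of $\partial$, and they are isometries for the $p_N$-norms. Since the source $\mathcal{S}_M(\G)^{\otimes(i+1)}$ is free, hence relatively projective, over $\mathcal{S}_M(\G)$, one simply lifts the $\mathcal{S}_M(\G)$-equivariant map $e_i-H_{i-1}\partial$ through $\partial$; concretely $H_i(1\otimes\xi):=s_{i+1}\bigl((e_i-H_{i-1}\partial)(1\otimes\xi)\bigr)$ and extend $\mathcal{S}_M(\G)$-equivariantly. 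The only nontrivial analytic input is that $e_i=f_ig_i-\Id$ is continuous $\mathcal{S}_M^{\otimes(i+1)}\to\mathcal{S}_N^{\otimes(i+1)}$, which is exactly what Proposition~\ref{weighted norms of g_k} provides for $M>\a_i(N)$ (note: this uses $g_i$, not $g_{i+1}$, which is why the hypothesis is $M>\a_k(N)$ and not $M>\a_{k+1}(N)$ as your route would require even if it worked). No further filling estimate is needed once $e_i$ is controlled, because $s_{i+1}$ costs nothing.
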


\begin{proof}
      Proposition \ref{weighted norms of g_k}, means that for every $i$, the map $g_i$ can be extended continuously to a map $\mathcal{S}_{\a_i(N)}(\G)^{\otimes^{i+1}} \to \mathcal{S}_{N}C_i(X)$ that is $\mathcal{S}_{\a_i(N)}(\G)$-equivariant. Since $M >  \a_k (N) \geq \a_i(N)$ for $i \leq k$, we ensure that for every $1 \leq i \leq k$, the map $g_i$ can be extended continuously to a map  $\mathcal{S}_{M}(\G)^{\otimes^{i+1}} \to \mathcal{S}_{N}C_i(X) $ that is $\mathcal{S}_{M}(\G)$-equivariant. Since the $\co \G$-equivariant maps $f_i$ are determined by a finite number of values, the composition $f_i g_i : \co [\G^{i+1}] \to \co [\G^{i+1}]$ can be $ \mathcal{S}_{M}(\G)$-equivariantly extended to a map $f_i g_i : \mathcal{S}_{M}(\G)^{\otimes^{k+1}} \to \mathcal{S}_{N}(\G)^{\otimes^{k+1}}$.
    We consider the resolutions
    \begin{align*}
    & \mathcal{S}_{M}(\G)^{\otimes^{k+2}} \xrightarrow{\partial_{k+1}} \ldots \to \mathcal{S}_{M}(\G)^{\otimes^{2}} \xrightarrow{\partial_{1}} \mathcal{S}_{M}(\G) \xrightarrow{\partial_{0}} \co \to 0, \\
        & \mathcal{S}_{N}(\G)^{\otimes^{k+2}}  \xrightarrow{\partial_{k+1}}  \ldots \to \mathcal{S}_{N}(\G)^{\otimes^{2}}  \xrightarrow{\partial_1}  \mathcal{S}_{N}(\G)  \xrightarrow{\partial_0}  \co \to 0
    \end{align*}
    as resolutions by $\mathcal{S}_{M}(\G)$-modules of the trivial module. These two sequences are strongly exact, that is, the maps $\partial_i: \mathcal{S}_{N}(\G)^{\otimes^{i+1}} \to \Im \partial_i \subseteq \mathcal{S}_{N}(\G)^{\otimes^{i}}$ have $\co$-linear continuous sections (in this case, this is just the continuous extension of the map $s_i(\g_1, \ldots, \g_i) = (1_\G,\g_1, \ldots ,\g_i )$). Notice also that the first resolution
    \begin{equation*}
        \mathcal{S}_{M}(\G)^{\otimes^{k+2}} \xrightarrow{\partial_{k+1}} \ldots \to \mathcal{S}_{M}(\G)^{\otimes^{2}} \xrightarrow{\partial_{1}} \mathcal{S}_{M}(\G) \xrightarrow{\partial_{0}} \co \to 0
    \end{equation*}
    consists of free, hence relatively projective $ \mathcal{S}_{M}(\G)$-modules. Since $(f_i g_i - \Id)_{i \leq k}$ gives an $\mathcal{S}_{M}(\G)$-equivariant chain map between our two resolutions, relative projectivity of this resolution yields inductively $\G$-equivariant continuous linear maps $H_i : \mathcal{S}_{M}(\G)^{\otimes^{i+1}} \to  \mathcal{S}_{N}(\G)^{\otimes^{i+2}}$ for $0\leq i \leq k$ (in fact they are $\mathcal{S}_{M}(\G)$-equivariant by linearity and using continuity for passing to the completion) satisfying 
    \begin{equation*}
        f_i g_i - \Id = \partial_{i+1} H_i + H_{i-1} \partial_{i},
    \end{equation*}
    as in the following commutative diagram (where we set $e_i = f_i g_i - \Id$ and drop $\G$ from our notation).

\begin{tikzcd}[scale=0.7em]
	{\mathcal{S}_{M}^{\otimes^{i+2}}} & {\mathcal{S}_{M}^{\otimes^{i+1}}} &  \ldots & {\mathcal{S}_{M}^{\otimes^{2}}} & {\mathcal{S}_{M}} & {\mathbb{C}} & 0 \\
	{\mathcal{S}_{N}^{\otimes^{i+2}}} & {\mathcal{S}_{N}^{\otimes^{i+1}}} & \ldots & {\mathcal{S}_{N}^{\otimes^{2}}} & {\mathcal{S}_{N}} & {\mathbb{C}} & 0
	\arrow["\partial", from=1-1, to=1-2]
	\arrow["{e_{i+1}}"', from=1-1, to=2-1]
	\arrow["\partial", from=1-2, to=1-3]
	\arrow["{H_i}"{description}, from=1-2, to=2-1]
	\arrow["{e_i}"', from=1-2, to=2-2]
	\arrow["\partial", from=1-3, to=1-4]
	\arrow["\partial", from=1-4, to=1-5]
	\arrow["\partial", from=1-5, to=1-6]
	\arrow["{e_{1}}"', from=1-4, to=2-4]
	\arrow[from=1-6, to=1-7]
	\arrow["{H_0}"{description}, from=1-5, to=2-4]
	\arrow["{e_{0}}"', from=1-5, to=2-5]
	\arrow["0"', from=1-6, to=2-6]
	\arrow["\partial"', from=2-1, to=2-2]
	\arrow["\partial"', from=2-2, to=2-3]
	\arrow["\partial"', from=2-3, to=2-4]
	\arrow["\partial"', from=2-4, to=2-5]
	\arrow["\partial"', from=2-5, to=2-6]
	\arrow[from=2-6, to=2-7]
\end{tikzcd}

\end{proof}

We can now prove our quantitative analogue of \cite[Proposition 6.14]{bader-sauer}.

\begin{proof}[Proof of Theorem \ref{Quantitative isomorphism of polynomial cohom onto ordinary cohom}]
    Fix $N > N_k$ and $M > \a_{k-1} (N)$. Choose a chain map $f_* : C_*(X, \co) \to \co [\G^{*+1}]$. We will first show that the comparison map is surjective. Let $c \in Z^k(\G, V)$. We want to show that there exists $c' \in Z_{\mathrm{pol} \leq N}^k(\G, V)$ and some $b \in B^k(\G, V)$ such that $c = c' + b$. The chain maps $f_*$ and $g_*$ are chain homotopy equivalences of $\co \G$-modules, hence there exist maps $H_* : \co [\G^*] \to \co [\G^{*+1}]$ such that 
    \begin{equation*}
        f_k g_k - \Id = \partial H_k + H_{k-1} \partial 
    \end{equation*}
    hence for $\xi \in \co[\G^{k+1}] $ we have:
    \begin{equation*}
        c( f_k g_k(\xi)) - c(\xi) = (d  H_{k-1}^* c)(\xi).
    \end{equation*}
    Proposition \ref{L^1-norm of g_k} shows that the map $g_k$ can be extended continuously to a map from $\mathcal{S}_{N}(\G)^{\otimes^{k+1}}$ to $\ell^1 C_k(X)$. Moreover the map $c \circ f_k$ can be extended to a map from $\ell^1 C_k(X)$ to $\ell^1(\G)$ as it is determined by a finite number of values. This means that $c' := c \circ f_k \circ g_k$ can be extended to all of  $\mathcal{S}_{N_k}(\G)^{\otimes^{k+1}}$ and hence $c' \in Z^k_{\mathrm{pol} \leq N}(\G, V)$. By setting $b = d  H_{k-1}^* c$, we obtain that $c = c' + b$. This proves surjectivity of the map $H_{\mathrm{pol}: M \to N}^k(\G, V) \to H^k(\G, V)$. By composing with the quotient map $H^k(\G, V) \to \overline{H}^k(\G, V)$, we see from this that the map $ \overline{H}_{\mathrm{pol}: M \to N}^k(\G, V) \to \overline{H}^k(\G, V)$ is also surjective.


    We will now show that the comparison map is injective.
    Let $H_i : \mathcal{S}_{M}(\G)^{\otimes^{i+1}} \to  \mathcal{S}_{N}(\G)^{\otimes^{i+2}}$ be the $\mathcal{S}_{M}(\G)$-equivariant continuous maps obtained in Proposition \ref{Existence of continuous homotopy} for $0 \leq i \leq k-1$.

     Let $b \in  Z_{\mathrm{pol} \leq N}^k(\G, V)$ for which there exists $c \in C^k (\G, V)$ such that $b = dc$. Our goal is to show that there exists $c' \in  C_{\mathrm{pol} \leq M}^{k-1}(\G, V)$ such that $b = dc'$. For $\xi \in \co [\G^{k}]$ we have:
    \begin{equation*}
         c(\xi) =  c( f_k g_k(\xi)) - dc(H_{k-1} \xi)  -  (d  H_{k-2}^* c)(\xi).
    \end{equation*}
    We set $c'(\xi) =  c( f_k g_k(\xi)) - dc(H_{k-1} \xi)$, so that $c = c ' + d  H_{k-2}^* c$. Since $d (d  H_{k-2}^* c) = 0$, we have that $dc' = dc = b$. It remains to show that $c' \in C_{\mathrm{pol} \leq M}^{k-1}(\G, V)$. Set $c_1(\xi) =  c( f_k g_k(\xi))$ and $c_2(\xi) =  dc(H_{k-1} \xi) = b(H_{k-1} \xi)$ so that $c' = c_1 + c_2$. Using the same argument as for the surjectivity part of our proof, Proposition \ref{L^1-norm of g_k} and Lemma \ref{Quantitative extension versus growth} imply that $c_1 \in C^{k-1}_{\mathrm{pol} \leq N}(\G, V) \subset C^{k-1}_{\mathrm{pol} \leq M}(\G, V)$. 
    It remains to show that $c_2 \in  C^{k-1}_{\mathrm{pol} \leq M}(\G, V)$. Lemma \ref{Quantitative extension versus growth} says that $b$ can be extended continuously to a map $\mathcal{S}_{N}(\G)^{\otimes^{k+1}} \to V$ and Lemma \ref{Existence of continuous homotopy} says that $H_{k-1}$ maps $ \mathcal{S}_{M}(\G)^{\otimes^{k}}$ into $\mathcal{S}_{N}(\G)^{\otimes^{k+1}}$. Hence $c_2$ can be extended to $ \mathcal{S}_{M}(\G)^{\otimes^{k}}$ and hence Lemma \ref{Quantitative extension versus growth} says that $c_2 \in C^{k-1}_{\mathrm{pol} \leq M}(\G, V)$. This proves injectivity of $H_{\mathrm{pol}: M \to N}^k(\G, V) \to H^k(\G, V)$.

    The assignment $c \mapsto c' =c_1 + c_2$ is continuous in the topology of convergence on compact subsets as the maps $f_k \circ g_k$ and $H_{k-1}$ are linear and send $\G^{k+1}$ into $\co [\G^{k+1}]$. Hence injectivity also holds for reduced cohomology and the continuous map $c \mapsto c_1$ induces a continuous inverse of the comparison map both in reduced and unreduced cohomology.

\end{proof}

\subsection{Examples and consequences}

We now turn to describing Theorem \ref{Quantitative isomorphism of polynomial cohom onto ordinary cohom} in particular examples.

We state first the case of degree 2 cohomology. With the notation of Theorem \ref{Quantitative isomorphism of polynomial cohom onto ordinary cohom}, we can choose $N_2$ to be $\deg \mathrm{cFV}_\G^2 = \deg \mathrm{ \d_\G}$ by Lemma \ref{Dehn function is nearly smaller than Filling Area}, where $\d_\G$ denotes the Dehn function of $\G$, and we have $\a_1(N) = N+1$.

\begin{cor}\label{Corollary: Comparison theorem in degree 2}
Let $\G$ be a finitely presented group with Dehn function $\d_1(t) \lesssim t^N$ for some $N \in \re_{>0}$. Let $V$ be an isometric representation of $\G$ on some Banach space. The comparison map:
 \begin{equation*}
     H_{\mathrm{pol}: N+1 \to N}^2(\G, V) \to H^2(\G, V)
 \end{equation*}
is an isomorphism.
\end{cor}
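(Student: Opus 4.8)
The plan is to read off Corollary~\ref{Corollary: Comparison theorem in degree 2} as the degree-$2$ instance of Theorem~\ref{Quantitative isomorphism of polynomial cohom onto ordinary cohom}, the only genuine input being Lemma~\ref{Dehn function is nearly smaller than Filling Area}, which turns a polynomial bound on the Dehn function into one on the second combinatorial filling function. Since $\G$ is finitely presented, fix a simply connected free simplicial $\G$-complex $X$ with cocompact $2$-skeleton. In degree $2$ the proof of Theorem~\ref{Quantitative isomorphism of polynomial cohom onto ordinary cohom} only involves the complex $X$ through its $2$-skeleton (the chain maps $f_*, g_*$ and the chain contraction $h_*$ in dimensions $\leq 2$) and only the filling function $\mathrm{cFV}^2_X$, and it needs $X$ only to be $1$-connected rather than contractible; one checks that Propositions~\ref{L^1-norm of g_k} and~\ref{weighted norms of g_k}, Lemma~\ref{Existence of continuous homotopy} and the surjectivity/injectivity arguments in the proof of Theorem~\ref{Quantitative isomorphism of polynomial cohom onto ordinary cohom}, restricted to $k = 2$, use nothing more (if one prefers to cite the theorem verbatim one instead assumes $\G$ of type $F_3$ and takes $X$ contractible with cocompact $3$-skeleton). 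By Lemma~\ref{Dehn function is nearly smaller than Filling Area}, the hypothesis $\d_\G \lesssim t^N$ gives $\mathrm{cFV}^2_X = \mathrm{cFV}^2_\G \lesssim t^N$, so $\mathrm{cFV}^2_X$ is polynomially bounded with $\deg \mathrm{cFV}^2_X \leq N$.

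Specializing Theorem~\ref{Quantitative isomorphism of polynomial cohom onto ordinary cohom} to $k = 2$ one has $N_2 = \deg \mathrm{cFV}^2_X$ and $\a_1(N') = N'+1$, so that theorem already yields that $H^2_{\mathrm{pol}: M \to N'}(\G, V) \to H^2(\G, V)$ is an isomorphism (and likewise for the reduced version) whenever $N' > N_2$ and $M > N'+1$. To land on the exact exponents $N' = N$ and $M = N+1$ claimed in the statement, I would rerun the estimates of Section~\ref{Section: A quantitative version of polynomial cohomology} with the weight $N$ directly, which is legitimate precisely because the bound $\mathrm{cFV}^2_X \lesssim t^N$ holds with that specific exponent (not merely with an exponent slightly above $\deg \mathrm{cFV}^2_X$). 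Concretely, Proposition~\ref{L^1-norm of g_k} then produces the chain map $g_*$ with $\|g_2(\g_0,\g_1,\g_2)\|_1 \leq C\,(1 + |\g_0| + |\g_1| + |\g_2|)^N$, Proposition~\ref{weighted norms of g_k} gives $p_N^X(g_1(1,\g)) \leq C\,(1+|\g|)^{N+1}$ (so $\a_1(N) = N+1$ is attained, no strict inequality being needed in the $k=1$ case), and the construction behind Lemma~\ref{Existence of continuous homotopy} furnishes $\mathcal{S}_{N+1}(\G)$-equivariant continuous homotopies $H_0, H_1$, with $H_i$ mapping $\mathcal{S}_{N+1}(\G)^{\otimes(i+1)}$ into $\mathcal{S}_{N}(\G)^{\otimes(i+2)}$. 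Plugging these into the surjectivity and injectivity arguments of the proof of Theorem~\ref{Quantitative isomorphism of polynomial cohom onto ordinary cohom} verbatim gives that $H^2_{\mathrm{pol}: N+1 \to N}(\G, V) \to H^2(\G, V)$, and its reduced analogue, is a continuous isomorphism.

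There is essentially no new obstacle here: the statement is a transcription of the degree-$2$ instance of the general theorem. The only points deserving attention are the harmless substitution of the exact exponent $N$ for $\deg \mathrm{cFV}^2_\G$ — legitimized by the precise form of Lemma~\ref{Dehn function is nearly smaller than Filling Area} — and the observation that in degree $2$ the machinery needs only a simply connected complex with cocompact $2$-skeleton and only the first relevant filling function, so that a finite presentation of $\G$ together with $\d_\G \lesssim t^N$ is enough input.
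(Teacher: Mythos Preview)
Your proposal is correct and follows the same route as the paper: the corollary is the $k=2$ instance of Theorem~\ref{Quantitative isomorphism of polynomial cohom onto ordinary cohom}, with Lemma~\ref{Dehn function is nearly smaller than Filling Area} converting the Dehn function bound into $\mathrm{cFV}^2_\G \lesssim t^N$, and $\a_1(N)=N+1$. The paper's own justification is just the sentence preceding the corollary; you have been more careful on two points the paper glosses over --- that the exact exponent $N$ (rather than any $N'>\deg\mathrm{cFV}^2_\G$) is admissible because the bound $\mathrm{cFV}^2_\G\lesssim t^N$ holds with that specific exponent, and that finitely presented (type $F_2$) suffices since the $k=2$ argument only uses a simply connected complex with cocompact $2$-skeleton --- both of which are legitimate and worth spelling out.
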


Another example of interest are hyperbolic groups, since all of their homological filling functions are linear \cite[Theorem 4]{lang-hyp-filling-functions}, we obtain the following result.

\begin{cor}\label{Corollary: Comparison theorem for hyperbolic groups}
    Let $\G$ be a discrete hyperbolic group. Let $V$ be an isometric representation of $\G$ on some Banach space. The comparison map:
 \begin{equation*}
     H_{\mathrm{pol}: k \to 1}^k(\G, V) \to H^k(\G, V)
 \end{equation*}
is an isomorphism.
\end{cor}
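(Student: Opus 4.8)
The plan is to deduce this directly from Theorem \ref{Quantitative isomorphism of polynomial cohom onto ordinary cohom}, so the only real work is to produce a suitable complex $X$ for $\G$ and to chase the constants $N_k$ and $\a_{k-1}$ through its recursions. First I would recall that a discrete hyperbolic group $\G$ is of type $F_\infty$ (a Rips complex with large enough parameter is contractible and carries a proper cocompact $\G$-action, which forces $\G$ to admit classifying spaces with finite skeleton in every dimension). Fixing $k$, I take a $K(\G,1)$ with finite $(k+1)$-skeleton, pass to its universal cover and barycentrically subdivide: this produces a contractible free simplicial $\G$-complex $X$ with cocompact $(k+1)$-skeleton, so the hypotheses of Theorem \ref{Quantitative isomorphism of polynomial cohom onto ordinary cohom} are met with $d = k$. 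The only mildly delicate point is freeness when $\G$ has torsion, which is precisely why one works with the universal cover of a genuine finite-skeleton classifying space rather than directly with the Rips complex.

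Next I would feed in the filling behaviour of hyperbolic groups: by Lang \cite[Theorem 4]{lang-hyp-filling-functions}, every combinatorial filling function $\mathrm{cFV}_X^i$ grows at most linearly, hence each $\mathrm{cFV}_X^i$ is polynomial with $\deg \mathrm{cFV}_X^i = 1$. Substituting $\deg \mathrm{cFV}_X^j = 1$ into the recursions of Theorem \ref{Quantitative isomorphism of polynomial cohom onto ordinary cohom} gives $N_k = \prod_{j=2}^{k} 1 = 1$ for every $k$, while the recursion for $\a_i$ collapses to $\a_1(N) = N+1$ and $\a_i(N) = \max\{N+1,\ \a_{i-1}(N)+1\}$ for $i \geq 2$, so that $\a_i(1) = i+1$, and in particular $\a_{k-1}(1) = k$.

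Finally, since the filling functions of $X$ are not merely polynomially bounded but polynomial (indeed linear), I would invoke the last sentence of Theorem \ref{Quantitative isomorphism of polynomial cohom onto ordinary cohom}: the comparison map is an isomorphism for $N = N_k = 1$ and $M = \a_{k-1}(N_k) = k$, i.e.
\[
H_{\mathrm{pol}: k \to 1}^k(\G, V) \xrightarrow{\simeq} H^k(\G, V),
\]
and likewise for the reduced versions, which is exactly the claim. The case $k=1$ does not involve $\a_0$ because degree-$1$ coboundaries are bounded (a $\G$-equivariant $c \in C^0(\G,V)^\G$ satisfies $\|c(\g)\| = \|c(e)\|$ for all $\g$), so $B^1_{\mathrm{pol}: 1\to 1}(\G,V) = B^1(\G,V)$, $Z^1_{\mathrm{pol}\le 1}(\G,V) = Z^1(\G,V)$, and the statement is immediate. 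I do not expect a genuine obstacle: once the two external inputs — that hyperbolic groups are of type $F_\infty$ and that their higher combinatorial filling functions are linear — are in place, the argument is pure bookkeeping; the only points requiring a moment's care are that Lang's linear bounds are stated for the same homological filling functions $\mathrm{cFV}^i$ used here (up to the relation $\sim$, which does not affect the degree) and the trivial arithmetic of the $\a_i$ recursion.
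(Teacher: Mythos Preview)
Your proof is correct and follows the same approach as the paper: invoke Lang's linear filling bounds to get $\deg \mathrm{cFV}_X^j = 1$, compute $N_k = 1$ and $\a_{k-1}(1) = k$ from the recursions, then apply the last clause of Theorem \ref{Quantitative isomorphism of polynomial cohom onto ordinary cohom}. The paper's proof is terser (it does not spell out the construction of $X$ or the $k=1$ case), but the argument is identical.
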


\begin{proof}
    Since all the homological filling functions of $\G$ are linear  \cite[Theorem 4]{lang-hyp-filling-functions}, we have $N_k = \prod_{j= 2}^k \deg (\mathrm{cFV_X^j}) = 1$. We can compute $\a_j(1) = \a_{j-1}(1) +1$ and obtain $\a_j(1) = j+1$. inductively. Hence $M_{k-1}= \a_{k-1}(N_k) = \a_{k-1}(1) = k$.
\end{proof}

In fact the previous example is somewhat special. Whenever $\G$ is not a hyperbolic group, we always need to take coboundaries coming from cochains growing at polynomial speed at most the square of the growth of their differentials.

\begin{cor}\label{Corollary: Comparison theorem in other cases}
    Let $\G$ be a discrete group of type $F_{k+1}$ and $k \geq 3$ (the case $k = 2$ is Corollary \ref{Corollary: Comparison theorem in degree 2}). Suppose that $\G$ is not hyperbolic and that all the filling functions $\mathrm{cFV}_\G^j$ are polynomially bounded for $2 \leq j \leq k$, set $N_k = \prod_{j=2}^k \deg (\mathrm{cFV}_\G^j)$. Let $V$ be an isometric representation of $\G$ on some Banach space. For every $N > N_k$ and $M >  N^2 + (k-2)N$, the comparison map:
 \begin{equation*}
     H_{\mathrm{pol}:M \to N}^k(\G, V) \to H^k(\G, V)
 \end{equation*}
is an isomorphism.
\end{cor}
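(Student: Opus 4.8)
The plan is to specialize Theorem~\ref{Quantitative isomorphism of polynomial cohom onto ordinary cohom} to a group $\G$ of type $F_{k+1}$ all of whose filling functions $\mathrm{cFV}_\G^j$ for $2 \le j \le k$ are polynomially bounded. By definition $N_k = \prod_{j=2}^k \deg(\mathrm{cFV}_\G^j)$, and the theorem gives that for $N > N_k$ and $M > \a_{k-1}(N)$ the comparison map $H_{\mathrm{pol}:M \to N}^k(\G, V) \to H^k(\G, V)$ is an isomorphism. So the task reduces to a purely arithmetic estimate: showing that $\a_{k-1}(N) \le N^2 + (k-2)N$ for every $N > N_k$, so that the hypothesis $M > N^2 + (k-2)N$ implies $M > \a_{k-1}(N)$.

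First I would unwind the recursion $\a_1(N) = N+1$ and, for $i \ge 2$, $\a_i(N) = \max\{(N+1)\prod_{j=2}^i \deg(\mathrm{cFV}_\G^j),\ \prod_{j=2}^i \deg(\mathrm{cFV}_\G^j) + \a_{i-1}(N)\}$. The key observation is that the non-hyperbolicity hypothesis forces $\prod_{j=2}^i \deg(\mathrm{cFV}_\G^j) \ge N_i \ge 2$ for all $i \ge 2$ at which a filling function is genuinely super-linear; more precisely, since $\G$ is not hyperbolic, by Lang's theorem \cite{lang-hyp-filling-functions} its filling functions cannot all be linear, and in fact one shows $\deg(\mathrm{cFV}_\G^2) \ge 2$ (a group with Dehn function of degree $< 2$, hence linear, is hyperbolic). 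Write $P_i := \prod_{j=2}^i \deg(\mathrm{cFV}_\G^j)$, so $P_2 \ge 2$ and $P_i \ge P_2 \ge 2$ is non-decreasing. Then $P_i \le P_{k-1} \le N_k$ for $i \le k-1$, and since we are assuming $N > N_k \ge P_i$, we have $N > P_i$ for all relevant $i$.

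Given $N > P_{k-1} \ge \cdots \ge P_2 \ge 2$, I claim $\a_i(N) \le NP_i + (i-1)N$ for $2 \le i \le k-1$, which I would prove by induction on $i$. For $i=2$: $\a_2(N) = \max\{(N+1)P_2,\ P_2 + \a_1(N)\} = \max\{(N+1)P_2,\ P_2 + N + 1\}$. Since $P_2 \ge 2$, the first term dominates, and $(N+1)P_2 = NP_2 + P_2 \le NP_2 + N$ (using $P_2 \le N$), giving $\a_2(N) \le NP_2 + N$ as claimed. For the inductive step, $\a_i(N) = \max\{(N+1)P_i,\ P_i + \a_{i-1}(N)\}$; the first term is $NP_i + P_i \le NP_i + N \le NP_i + (i-1)N$, while the second is at most $P_i + NP_{i-1} + (i-2)N \le N + NP_i + (i-2)N = NP_i + (i-1)N$, using $P_{i-1} \le P_i$ and $P_i \le N$. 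Thus $\a_{k-1}(N) \le NP_{k-1} + (k-2)N \le N \cdot N_k + (k-2)N$. But the hypothesis $N > N_k$ does not immediately give $N \cdot N_k \le N^2$ — it gives $N \cdot N_k < N^2$, which is exactly what we want: $\a_{k-1}(N) \le N N_k + (k-2)N < N^2 + (k-2)N$. Hence $M > N^2 + (k-2)N \ge \a_{k-1}(N)$, and Theorem~\ref{Quantitative isomorphism of polynomial cohom onto ordinary cohom} applies to give the isomorphism, with the reduced statement following identically.

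The main obstacle is entirely bookkeeping: making sure the non-hyperbolicity hypothesis is used correctly to guarantee $\deg(\mathrm{cFV}_\G^2) \ge 2$ (so that $(N+1)P_i$ rather than $P_i + \a_{i-1}(N)$ is under control via $P_i \le N$), and tracking the strict versus non-strict inequalities so the final bound $M > N^2 + (k-2)N$ genuinely implies $M > \a_{k-1}(N)$. I would also remark that, as the statement notes, the case $k=2$ is exactly Corollary~\ref{Corollary: Comparison theorem in degree 2} (there $\a_1(N) = N+1 \le N^2$ whenever $N \ge 2$, i.e. whenever $\G$ is non-hyperbolic so $N > N_2 = \deg\d_\G \ge 2$), so it is excluded only to avoid a degenerate empty product; no separate argument is needed for $k \ge 3$ beyond the above induction.
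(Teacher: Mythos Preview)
Your proposal is correct and follows essentially the same approach as the paper: both reduce to Theorem~\ref{Quantitative isomorphism of polynomial cohom onto ordinary cohom} and then bound $\a_{k-1}(N)$ inductively using $P_i \le N_k < N$ together with the non-hyperbolicity input $\deg \mathrm{cFV}_\G^2 \ge 2$. The paper's proof is terser (and writes the bounds as equalities rather than inequalities), while yours tracks the intermediate quantity $NP_i + (i-1)N$ and the strict vs.\ non-strict inequalities more carefully, but the arguments are the same in substance.
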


\begin{proof}
    Since $\G$ is not hyperbolic, we have $N > N_k \geq \deg \mathrm{cFV}_\G^2 \geq 2$ for $k \geq 2$ and hence $\a_2(N) = \max \{ (N + 1)N, 2 N +1 \} = (N + 1)N$. Now inductively we see that $\a_j(N) = N^2 + (j-1)N$ for $j \leq k$.
\end{proof}

In particular, if $\G$ is a $CAT(0)$ group acting properly and cocompactly on some $CAT(0)$-space $X$, for $k \geq 2$ we have $N_k = \prod_{j=2}^k \deg \mathrm{cFV}^j_X = \prod_{j=2}^k \frac{j}{j-1} = k$ \cite{wenger-short-proof}, so we have $\a_{k-1}(N_k) = k^2 + (k-2) k = 2k^2 - 2k$. The same computation applies to any non-cocompact arithmetic lattice $\G$ in a semisimple Lie group $G$ with $ \mathrm{rank}_\re G \geq 3$, for $k < \mathrm{rank}_\re G$ \cite{leuzinger-young}.

\section{Induction and transfer}\label{Section: Induction and transfer}

This section introduces induction of isometric representations on Banach spaces and general techniques for inducing quantitative polynomial cohomology of these representations. We also define one of the main technical tools of this paper, namely a transfer operator for quantitative polynomial cohomology, give sufficient conditions for its existence and prove its fundamental properties.

\subsection{$\textrm L^p$-induction of representations through a ME-coupling}

Let $\G$ and $\L$ be two countable ME groups, with ME-coupling $(\Omega, X_\L, X_\G, m)$, and associated cocycles $c_\G : \L \times X_\G \to \G$ and $c_\L : \G \times X_\L \to \L$. 

\begin{defn}
    Let $(\rho, V)$ be an isometric representation of $\L$ on some separable Banach space $V$. The \textit{$\textrm L^p$-induced representation} $I^p(\rho)$ of $\rho$ is the representation on the separable Banach space $I^p(V) = L^p(X_\L, V)$ of Bochner integrable functions, that is, the space of measurable maps $F : X_\L \to V$ such that:
    \begin{equation*}
        ||F||_{L^p(X_\L, V)}^p = \int_{X_\L} ||F(x)||_{V}^p \, \mathrm{d}m(x) < \infty,
    \end{equation*}
    where the \textit{induced $\G$-action} of $\g \in \G$ on $F \in L^p(X_\L, V)$ is given by:
    \begin{equation*}
        (I\rho(\g) F)(x) = \rho(\a_\L(\g, x)) F (\g^{-1} \cdot x).
    \end{equation*}
\end{defn}

This defines an isometric $\G$-action on $ L^p(X_\L, V)$.
Indeed, since the action of $\G$ on $(X_\L, m_{|X_{\L}})$ is measure preserving and $\rho$ is isometric we have:
\begin{align*}
    ||I \rho (\g) F||_{ L^p(X_\L, V)}^p & = \int_{X_\L} || \rho(\a_\L(\g, x)) F (\g^{-1} \cdot x)||_{V}^p \, \mathrm{d}m(x) \\
    & = \int_{X_\L} || \rho(c_\L(\g, y)) F (y)||_{V}^p \, \mathrm{d}m(y) = ||F||_{ L^p(X_\L,V)}^p.
\end{align*}

\begin{rem}
    No need to ask $\textrm L^p$-ME to define the induced representation nor the induced action. However, we will require integrability to induce polynomially growing cocycles into this representation.
\end{rem}

\subsection{Induction of quantitative polynomial cohomology}

\begin{prop}\label{Induction on quantitative polynomial cohomology}
    Fix $N \geq 0$ and $p \geq 1$. Let $\L$ and $\G$ be two finitely generated groups. Suppose that $\L$ and $\G$ are $\textrm L^r$-ME for some $r \geq pN$ and let $(\Omega, X_\L, X_\G,  m)$ be an $L^{r}$-integrable ME-coupling between $\L$ and $\G$ with associated cocycles $c_\L : \G \times X_\L \to \L$ and $c_\G : \L \times X_\G \to \G$. Then, for every isometric representation $(\rho, V)$ of $\L$ on some separable Banach space, the induction map
    \begin{align*}
        I : C_{\mathrm{pol} \leq N}(\L^{k+1}, V)^\L \to C_{\mathrm{pol} \leq N}(\G^{k+1}, L^p(X_\L, V))^\G\\
        Ib(\g_0, \ldots, \g_k)(x) = b(\a_\L(\g_0, x), \ldots, \a_\L(\g_k, x) ).
    \end{align*}
    is well-defined. It is a continuous linear map that commutes with differentials and for any $M$ such that $r/p \geq  M \geq N$, it induces continuous linear maps:
    \begin{align*}
        H_{\mathrm{pol}: M\to  N}^k(\L, \rho) \to H_{\mathrm{pol}: M\to  N}^k(\G, I^p (\rho)), \\
        \overline{H}_{\mathrm{pol}: M\to  N}^k(\L, \rho) \to \overline{H}_{\mathrm{pol}: M\to N}^k(\G, I^p(\rho)).
    \end{align*}
\end{prop}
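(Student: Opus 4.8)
The plan is to verify each asserted property of $I$ directly from its defining formula, the only genuinely quantitative input being Lemma \ref{Growth of integrals is sublinear}. I would check, in order: (i) for each $(\g_0, \ldots, \g_k) \in \G^{k+1}$ the map $x \mapsto b(\a_\L(\g_0, x), \ldots, \a_\L(\g_k, x))$ is strongly measurable and lies in $I^p(V) = L^p(X_\L, V)$; (ii) the cochain $Ib$ has polynomial growth at most $N$, with a bound linear in the growth constant of $b$; (iii) $Ib$ is $\G$-equivariant for the induced action; (iv) $I$ is linear and commutes with $d$; (v) $I$ is continuous for the topology of pointwise convergence; (vi) when $r/p \geq M \geq N$, $I$ preserves growth at most $M$ as well, and hence descends to the quantitative polynomial cohomology spaces, reduced and unreduced.

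The technical heart is (i)--(ii). Fix $b \in C_{\mathrm{pol} \leq N}(\L^{k+1}, V)^\L$ and a constant $C_b$ with $\|b(\l_0, \ldots, \l_k)\| \leq C_b (1 + \sum_i |\l_i|_{S_\L})^N$ for all $\l_i \in \L$. Strong measurability is automatic: $x \mapsto (\a_\L(\g_0, x), \ldots, \a_\L(\g_k, x))$ is a measurable map into the countable set $\L^{k+1}$, so $Ib(\g_0, \ldots, \g_k)$ is countably-valued on measurable pieces. For the norm, the pointwise bound $\|Ib(\g_0, \ldots, \g_k)(x)\| \leq C_b(1 + \sum_i |\a_\L(\g_i, x)|_{S_\L})^N$, raised to the $p$-th power, combined with convexity of $t \mapsto t^{Np}$ (or subadditivity if $Np < 1$) to split $(1 + \sum_i a_i)^{Np}$ into a sum of $(1 + a_i)^{Np}$, and integrated over $X_\L$, gives
\[
\|Ib(\g_0, \ldots, \g_k)\|_{L^p(X_\L, V)}^p \ \lesssim\ C_b^p \sum_{i=0}^k \int_{X_\L} \big( 1 + |\a_\L(\g_i, x)|_{S_\L} \big)^{Np} \, \mathrm{d}m(x),
\]
with implicit constant depending only on $k$, $N$, $p$. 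Since $X_\L$ has finite measure and $r \geq pN$, the coupling is $L^{Np}$-integrable on $\L$, so Lemma \ref{Growth of integrals is sublinear} applied with exponent $Np$ bounds each integral by a constant times $(1 + |\g_i|_{S_\G})^{Np}$; summing and using $(1 + |\g_i|)^{Np} \leq (1 + \sum_j |\g_j|)^{Np}$ yields $\|Ib(\g_0, \ldots, \g_k)\|_{L^p} \lesssim C_b (1 + \sum_i |\g_i|_{S_\G})^N$. Hence $Ib \in C_{\mathrm{pol} \leq N}(\G^{k+1}, I^p(V))$.

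The remaining points are formal. For (iii) I would substitute the cocycle identity $\a_\L(\g\g_i, x) = \a_\L(\g, x)\,\a_\L(\g_i, \g^{-1} \cdot x)$ into the formula, factor out $\rho(\a_\L(\g, x))$ using $\L$-equivariance of $b$, and recognize the result as $(I^p(\rho)(\g)\,Ib(\g_0, \ldots, \g_k))(x)$, using $(I^p(\rho)(\g)F)(x) = \rho(\a_\L(\g,x))F(\g^{-1}\cdot x)$; this also shows $Ib$ lands in the $\G$-invariants. For (iv), linearity is clear and $I \circ d = d \circ I$ holds because $d$ only deletes arguments: $d(Ib)(\g_0, \ldots, \g_{k+1})(x) = (db)(\a_\L(\g_0, x), \ldots, \a_\L(\g_{k+1}, x)) = I(db)(\g_0, \ldots, \g_{k+1})(x)$. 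For (v), the estimate above shows $I$ sends the set $\{ b : \|b(\cdot)\| \leq C(1 + \sum|\cdot|)^N \}$ into an analogous bounded set; on it, a net $b_\alpha \to b$ pointwise has $\|Ib_\alpha(\g_0, \ldots, \g_k)(x) - Ib(\g_0, \ldots, \g_k)(x)\|^p \to 0$ pointwise in $x$ and dominated by the fixed $L^1$-function $(2C(1 + \sum_i |\a_\L(\g_i, x)|_{S_\L})^N)^p$, so dominated convergence gives convergence in $L^p(X_\L, V)$, i.e. continuity of $I$.

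Finally, for (vi), since $r/p \geq M$ means $r \geq pM$, rerunning (i)--(ii) with $M$ in place of $N$ shows $I$ also maps $C_{\mathrm{pol} \leq M}(\L^{j+1}, V)^\L$ into $C_{\mathrm{pol} \leq M}(\G^{j+1}, I^p(V))^\G$; with $I \circ d = d \circ I$ this gives $I(Z_{\mathrm{pol} \leq N}^k(\L, \rho)) \subseteq Z_{\mathrm{pol} \leq N}^k(\G, I^p(\rho))$ and $I(B_{\mathrm{pol}: M \to N}^k(\L, \rho)) \subseteq B_{\mathrm{pol}: M \to N}^k(\G, I^p(\rho))$ (if $\omega = dc$ with $c$ of growth $\leq M$ and $\omega$ of growth $\leq N$, then $I\omega = d(Ic)$ with $Ic$ of growth $\leq M$ and $I\omega$ of growth $\leq N$), hence $I$ descends to a continuous map $H_{\mathrm{pol}: M \to N}^k(\L, \rho) \to H_{\mathrm{pol}: M \to N}^k(\G, I^p(\rho))$; continuity of $I$ then carries the closure of $B_{\mathrm{pol}: M \to N}^k(\L, \rho)$ into that of $B_{\mathrm{pol}: M \to N}^k(\G, I^p(\rho))$, yielding the reduced statement. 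I expect the main obstacle to be exactly the estimate of (i)--(ii): ensuring $Ib$ comes back with growth $\leq N$ and not merely $\leq Np$, which hinges on the convexity split together with the sublinearity of Lemma \ref{Growth of integrals is sublinear} and the finiteness of the measure on $X_\L$; the continuity/closure passage for reduced cohomology is the only other place requiring care, since pointwise convergence of cochains does not by itself control their polynomial constants.
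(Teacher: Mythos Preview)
Your proposal is correct and follows essentially the same route as the paper: the paper's proof also reduces everything to the pointwise bound $\|b(\a_\L(\g_0,x),\ldots)\|\le C(1+\sum_i|\a_\L(\g_i,x)|)^N$, splits the $pN$-th power into a sum of single-variable terms, and invokes Lemma~\ref{Growth of integrals is sublinear} at exponent $pN$ to recover growth $\le N$ after taking the $p$-th root, then dispatches equivariance and $I\circ d=d\circ I$ as ``standard'' and continuity by dominated convergence. Your write-up is in fact more explicit than the paper's on equivariance, on measurability, and on why $I$ respects the $B^k_{\mathrm{pol}:M\to N}$ subspaces and their closures.
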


\begin{proof}
    Let $b \in C_{\mathrm{pol} \leq N}(\L^{k+1}, V) $ and $\g_0, \ldots, \g_k \in \G$. We have:
    \begin{align*}
        ||Ib(\g_0, \ldots, \g_k)||_{ L^p(X_\L, V)}^p &= \int_{X_\L} ||b(\a_\L(\g_0, x), \ldots, \a_\L(\g_k, x) )||_V^p \, \mathrm{d}m (x) \\
        &\leq C_1  \int_{X_\L} (1 + |\a_\L(\g_0, x)|_{S_\L} + \ldots + |\a_\L(\g_k, x)|_{S_\L} )^{pN} \, \mathrm{d}m (x) \\
        &\leq C_1 \sum_{i= 0}^k \int_{X_\L}  |\a_\L(\g_i, x)|_{S_\L}^{pN} \, \mathrm{d}m (x) \\
        &\leq C_1 \sum_{i= 0}^k \int_{X_\L}  |\a_\L(\g_i, x)|_{S_\L}^r \, \mathrm{d}m (x) < \infty.
    \end{align*}

    Hence $Ib(\g_0, \ldots, \g_k) \in L^p(X_\L, V)$ and so $Ib \in  C(\G^{k+1}, L^p(X_\L, V))$. For $r>0$ we set $M_r = \sup_{s \in S_{\G}} \int_{X_\L}  |\a_\L(s, x)|_{S_\L}^r \, \mathrm{d}m (x) $. From this inequality and Lemma \ref{Growth of integrals is sublinear} we can see that:
    \begin{equation*}
        ||Ib(\g_0, \ldots, \g_k)||_{ L^p(X_\L, V)}^p 
         \leq C_1 \sum_{i= 0}^k \int_{X_\L}  |\a_\L(\g_i, x)|_{S_\L}^{pN} \, \mathrm{d}m (x) \leq C_2 M_{pN} \sum_{i = 0}^k |\g_i|_{S_\G}^{pN}.
    \end{equation*}
    Hence the subadditivity of $t \mapsto t^{1/p}$ implies: 
    \begin{equation*}
        ||Ib(\g_0, \ldots, \g_k)||_{ L^p(X_\L, V)} \leq C_3 M_{pN}^{1/p} \sum_{i = 0}^k |\g_i|_{S_\G}^{N}.
    \end{equation*}
    and so $Ib \in C_{\mathrm{pol} \leq N}(\G^{k+1}, L^p(X_\L, V))$.

    The fact that $Ib$ is $\G$-equivariant, and that the resulting map $I$ commutes with differentials are standard \cite[Section 3.2]{shalom2004harmonic}. The continuity of $I$ in the topology of uniform convergence on compact subsets (which here is just the topology of pointwise convergence) follows from the dominated convergence theorem.
    
    The map $I$ induces maps on the quotient spaces for $r/p \geq M \geq N$ because as $pM \leq r$, the map $I:  C_{\mathrm{pol} \leq M}(\L^{k}, V) \to C_{\mathrm{pol} \leq M}(\G^{k}, L^p(X_\L, V))$ is also well-defined, $\G$-equivariant, continuous and commutes with differentials.
    
\end{proof}

\subsection{Transfer operator and quantitative polynomial cohomology}

In this section we define a transfer operator as in \cite[3.2.1]{shalom2004harmonic} for polynomial cohomology and reduced polynomial cohomology for ME-couplings with sufficient integrability. 

We keep the notation from the previous section, that is, we let $\L$ and $\G$ be two finitely generated ME groups, $(\Omega, X_\L, X_\G, m)$ a ME-coupling between $\L$ and $\G$ with associated cocycles $c_\L$ and $c_\G$.
For $\l \in \L$ and $\g \in \G$ define:
\begin{equation*}
    A_\l(\g) := \{ x \in X_\G, \, \a_\G(\l, x) = \g \} = \a_\G(\l, \cdot)^{-1}(\{ \g \}).
\end{equation*}
Notice that for each $\l \in \L$, $(A_\l(\g))_{\g \in \G}$ defines a partition of $X_\G$.

We also define for $\eta = (\l_0, \ldots, \l_{k}) \in \L^{k+1}$ and $\xi = (\g_{0}, \ldots, \g_{k}) \in \G^{k+1}$:
\begin{equation*}
     A_\eta(\xi) :=  A_{\l_0}(\g_0) \cap \ldots \cap   A_{\l_{k}}(\g_{k}).
\end{equation*} 
If we consider the product ME-coupling $(\Omega^{k+1}, X_\L^{k+1}, X_\G^{k+1}, m^{\otimes(k+1)})$ between $\L^{k+1}$ and $\G^{k+1}$, with associated cocycles $c_{\L^{k+1}}(\xi, \overline{x}) = (c_\L (\g_0, x_0), \ldots,c_\L (\g_k, x_k) )$ and $c_{\G^{k+1}}(\eta , \overline{x}) = (c_\G (\l_0, x_0), \ldots,c_\G (\l_k, x_k) )$, we see that the sets $ A_\eta(\xi)$ can be seen as fibres for these cocycles after restriction to the diagonal via the diagonal inclusion $\diag: X_\G \to X_\G^{k+1}, x \mapsto (x, \ldots, x)$. We have:
\begin{equation*}
    A_\eta(\xi) =   A_{\l_0}(\g_0) \cap \ldots \cap   A_{\l_{k}}(\g_{k}) = (\a_{\G^{k+1}}(\eta, \cdot)\circ \diag) ^{-1}(\{ \xi\}).
\end{equation*}
Hence for each $\eta \in \L^{k+1}$, another partition of $X_\G$ is given by:
\begin{equation*}
    X_\G = \bigsqcup_{\xi \in \G^{k+1}} A_\eta(\xi).
\end{equation*}

The cocycle relation for $c_\G$ (and for $\a_\G$) gives an important relation satisfied by the subsets $A_\l(\g)$.

\begin{lem}\label{Cocycle relation for partitions of X_G}
For every $\l, \l' \in \L$ and $\g, \g' \in \G$ we have:
\begin{equation*}
    A_\l(\g) \cap A_{\l \l'}(\g') = A_\l(\g) \cap \l \cdot A_{\l'}(\g^{-1} \g').
\end{equation*}
Similarly, for every $\eta \in \L^{k+1}, \xi \in \G^{k+1}, \l \in \L$ and $\g \in \G$ we have:
\begin{equation*}
    A_\l(\g) \cap A_{\l \eta}(\xi) = A_\l(\g) \cap \l \cdot A_{\eta}(\g^{-1} \xi).
\end{equation*}
\end{lem}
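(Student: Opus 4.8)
The plan is to reduce the claimed set identity to a pointwise statement about a single point $x \in X_\G$ and then read it off directly from the cocycle relation for $\a_\G$. First I would fix $\l, \l' \in \L$ and $\g, \g' \in \G$ and take a point $x \in A_\l(\g)$, that is, a point with $\a_\G(\l, x) = \g$. The cocycle relation $\a_\G(\l\l', x) = \a_\G(\l, x)\,\a_\G(\l', \l^{-1}\cdot x)$ then reads $\a_\G(\l\l', x) = \g\,\a_\G(\l', \l^{-1}\cdot x)$, so for such $x$ the condition $\a_\G(\l\l', x) = \g'$ defining membership in $A_{\l\l'}(\g')$ is equivalent to $\a_\G(\l', \l^{-1}\cdot x) = \g^{-1}\g'$, i.e.\ to $\l^{-1}\cdot x \in A_{\l'}(\g^{-1}\g')$. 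Since the induced action of $\L$ on $X_\G$ is a genuine action, $\l^{-1}\cdot x \in A_{\l'}(\g^{-1}\g')$ is equivalent to $x \in \l\cdot A_{\l'}(\g^{-1}\g')$. Intersecting both sides of this chain of equivalences with the defining condition $x \in A_\l(\g)$ gives the first identity.

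For the tuple version I would apply the first identity in each coordinate. Writing $\l\eta = (\l\l_0, \ldots, \l\l_k)$, $\g^{-1}\xi = (\g^{-1}\g_0, \ldots, \g^{-1}\g_k)$, and using $A_\eta(\xi) = \bigcap_{i=0}^{k} A_{\l_i}(\g_i)$ and $A_{\l\eta}(\xi) = \bigcap_{i=0}^{k} A_{\l\l_i}(\g_i)$, one computes
$$A_\l(\g) \cap A_{\l\eta}(\xi) = \bigcap_{i=0}^{k}\big( A_\l(\g) \cap A_{\l\l_i}(\g_i)\big) = \bigcap_{i=0}^{k}\big( A_\l(\g) \cap \l\cdot A_{\l_i}(\g^{-1}\g_i)\big),$$
where the second equality is the first part applied to each factor; since $x \mapsto \l\cdot x$ commutes with intersections, the right-hand side equals $A_\l(\g) \cap \l\cdot \bigcap_{i=0}^{k} A_{\l_i}(\g^{-1}\g_i) = A_\l(\g)\cap \l\cdot A_\eta(\g^{-1}\xi)$, as desired.

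There is essentially no serious obstacle here: the lemma is a direct bookkeeping consequence of the cocycle relation. The only points requiring care are conventions — the passage between $c_\G$ and $\a_\G(\l,\cdot) = c_\G(\l^{-1},\cdot)^{-1}$, the direction of the induced $\L$-action on $X_\G$, and the choice to invoke the cocycle relation in the form $\a_\G(\l\l',x) = \a_\G(\l,x)\,\a_\G(\l',\l^{-1}\cdot x)$ (the corresponding identity for $c_\G$ would place inverses in less convenient positions).
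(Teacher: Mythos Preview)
Your proof is correct and follows essentially the same approach as the paper: both use the cocycle relation for $\a_\G$ pointwise to establish the first identity, and then apply it componentwise for the tuple version. Your write-up is in fact more detailed than the paper's, which omits the explicit computation for the second statement.
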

\begin{proof}
    For $\l, \l' \in \L$, the cocycle relation for $\a_\G$ reads as:
    \begin{equation*}
        \a_\G(\l \l', x) = \a_\G(\l, x) \a_\G(\l', \l^{-1} \cdot x).
    \end{equation*}
    The condition $x\in  A_\l(\g) \cap A_{\l \l'}(\g')$ means that $\a_\G(\l, x) = \g$ and $\a_\G(\l \l', x) = \g'$. Hence $\a_\G(\l', \l^{-1} \cdot x) = \g^{-1} \g '$ and so $x \in \l \cdot A_{\l'}(\g^{-1} \g')$. The reverse inclusion is shown in the same way. The second statement is proven by applying the first statement to each component.
\end{proof}

Let $1 \leq p < \infty$ and $(\rho, V)$ denote some isometric representation of $\L$ on some separable Banach space. Let $B \in  C^k(\G, I^p(V))$, we define (if it makes sense) for $\eta = (\l_0, \ldots, \l_{k}) \in \L^{k+1}$:
\begin{equation*}
    TB(\eta) := \sum_{\xi \in \G^{k+1}} \int_{ A_{\eta}(\xi)} B(\xi )(x) \mathrm{d} m (x).
\end{equation*}

Since for every $\xi \in \G^{k+1}$ we have $B(\xi ) \in I^p(V) = L^p(X_\L, V)$ and $X_\L$ has finite measure, each of the integrals $\int_{ A_{\eta}(\xi)} B(\xi )(x) \mathrm{d} m (x)$ exist, but it is not automatic that the series defining $TB(\eta)$ converge. The following Theorem gives a sufficient condition guaranteeing the existence of $TB(\eta)$.

\begin{thm}\label{Higher transfer operator is well defined} Let $1<p,q<\infty$ such that $\frac{1}{p} + \frac{1}{q} = 1$ and $k \in \na_{\geq 2}$.
Suppose that $X_\G \subseteq X_\L$ and that there exists $\varepsilon >0$ such that for every $\eta \in \{ 1_\L\} \times \L^k$ we have:
\begin{equation*}
    \int_{X_\G} \#S_{\G^{k}}( |\a_{\G^{k+1}}(\eta, x) | )^{\frac{q}{p}} |\a_{\G^{k+1}}(\eta, x) |^{q(N+1) - 1 + \varepsilon} \mathrm{d} m (x) <  \infty.
\end{equation*}
Then the series defining the map $TB(\l_0, \ldots, \l_{k})$ as above converges absolutely for every $\l_0, \ldots, \l_{k} \in \L$. The map $T: B \to TB$ defines a linear map \begin{equation*}
    T : C^k_{\mathrm{pol} \leq N} (\G,\textrm L^p(X, V))^\G \to C^k(\L , V)^\L 
\end{equation*} which induces continuous linear maps for every $M\geq N$:
\begin{align*}
    & H^k_{\mathrm{pol}:M \to N} (\G, I^p(\rho)) \to  H^k(\L , \rho)  , \\
    & \overline{H}^k_{\mathrm{pol}:M \to N} (\G,I^p(\rho)) \to \overline{H}^k(\L , \rho) .
\end{align*}
If moreover the coupling is $L^{pN}$-integrable on $\L$, then we have $T \circ I = m (X_\G) \Id$. 

\end{thm}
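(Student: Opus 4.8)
The plan is to follow, in higher degree, the blueprint of the degree-one transfer operator sketched above in commented form, upgrading the two Hölder estimates and the cocycle bookkeeping to arbitrary $k\geq 2$.

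\emph{Absolute convergence on $\{1_\L\}\times\L^k$.} Fix $\eta=(1_\L,\l_1,\dots,\l_k)\in\{1_\L\}\times\L^k$. Since $\a_\G(1_\L,\cdot)\equiv 1_\G$, only the sets $A_\eta(\xi)$ with $\xi\in\{1_\G\}\times\G^k$ are nonempty, and $(A_\eta(\xi))_\xi$ is exactly the partition of $X_\G$ into the fibres of $x\mapsto\a_{\G^{k+1}}(\eta,x)$. Bounding $\|TB(\eta)\|\leq\sum_\xi\|B(\xi)\|_{L^1(A_\eta(\xi),V)}$, applying Hölder on each finite-measure set $A_\eta(\xi)$, and using that $B$ has polynomial growth $\leq N$ gives
\[
\|TB(\eta)\|\;\leq\;C_B\sum_{\xi}(1+|\xi|)^{N}\,m(A_\eta(\xi))^{1/q}
\]
for a constant $C_B$ depending on $B$. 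Now write $(1+|\xi|)^{N}m(A_\eta(\xi))^{1/q}$ as the product of $(1+|\xi|)^{-(1+\varepsilon')/p}\#S_{\G^{k}}(|\xi|)^{-1/p}$ and $\#S_{\G^{k}}(|\xi|)^{1/p}(1+|\xi|)^{N+(1+\varepsilon')/p}m(A_\eta(\xi))^{1/q}$, and apply Hölder once more with exponents $p,q$. Grouping the $\xi$ by the value $n=|\xi|$, of which there are $\#S_{\G^{k}}(n)$, the $p$-th power of the first factor sums to $\sum_{n}(1+n)^{-(1+\varepsilon')}<\infty$; using $q/p=q-1$, so that $q\bigl(N+(1+\varepsilon')/p\bigr)=q(N+1)-1+q\varepsilon'/p$, the $q$-th power of the second factor equals, after converting the sum over the partition into an integral over $X_\G$,
\[
\int_{X_\G}\#S_{\G^{k}}\bigl(|\a_{\G^{k+1}}(\eta,x)|\bigr)^{q/p}\,|\a_{\G^{k+1}}(\eta,x)|^{\,q(N+1)-1+\varepsilon}\,\mathrm dm(x),
\]
with $\varepsilon=q\varepsilon'/p$, which is finite by hypothesis.

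\emph{Extension to $\L^{k+1}$, equivariance, and descent.} Every $\eta=(\l_0,\dots,\l_k)$ is the diagonal left translate $\l_0\cdot\eta'$ with $\eta'=(1_\L,\l_0^{-1}\l_1,\dots,\l_0^{-1}\l_k)\in\{1_\L\}\times\L^k$. Refining $A_{\l\eta}(\xi)=\bigsqcup_{\g}A_\l(\g)\cap\l\cdot A_\eta(\g^{-1}\xi)$ via Lemma \ref{Cocycle relation for partitions of X_G}, using the $\G$-equivariance of $B$, the inclusion $X_\G\subseteq X_\L$ (which forces $\a_\L(\g,x)=\l$ and $\g^{-1}\cdot x=\l^{-1}\cdot x$ whenever $x\in A_\l(\g)$, the identities recorded in the commented degree-one section), and the $\L$-invariance of $m|_{X_\G}$, a short computation gives $TB(\l\eta)=\rho(\l)\,TB(\eta)$. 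Taking $\eta=\eta'$ extends the absolute convergence of the previous step to all of $\L^{k+1}$ and shows $TB\in C^k(\L,V)^\L$; linearity is immediate, continuity follows as in \cite[3.2.1]{shalom2004harmonic}, and $T$ commutes with $d$ by the partition identity $\bigsqcup_{\g_i\in\G}A_{(\l_0,\dots,\l_k)}(\dots,\g_i,\dots)=A_{(\l_0,\dots,\widehat{\l_i},\dots,\l_k)}(\dots,\widehat{\g_i},\dots)$. The same manipulations as in the commented degree-one case then show $T$ sends cocycles to cocycles and the relevant coboundaries to coboundaries, so it induces continuous maps on $H^k_{\mathrm{pol}:M\to N}$ and $\overline H^k_{\mathrm{pol}:M\to N}$ for every $M\geq N$, the reduced version following from continuity.

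\emph{The identity $T\circ I=m(X_\G)\,\Id$.} Under $L^{pN}$-integrability on $\L$ the induction map $I$ is defined (Proposition \ref{Induction on quantitative polynomial cohomology} with $r=pN$). For $b\in C^k_{\mathrm{pol}\leq N}(\L^{k+1},V)^\L$, $\eta=(\l_0,\dots,\l_k)$ and $x\in A_\eta(\xi)\subseteq X_\G\subseteq X_\L$ one has $\a_\G(\l_i,x)=\g_i$, hence $\a_\L(\g_i,x)=\a_\L(\a_\G(\l_i,x),x)=\l_i$ by the relation $c_\L(c_\G(\l^{-1},x),x)^{-1}=\l$ valid on $X_\G$; thus $Ib(\xi)(x)=b(\l_0,\dots,\l_k)=b(\eta)$ is constant on $A_\eta(\xi)$, and summing over the partition $\bigsqcup_\xi A_\eta(\xi)=X_\G$ gives $TIb(\eta)=m(X_\G)\,b(\eta)$. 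The main obstacle is the first step: arranging the two-fold Hölder split so that the exponent of $|\a_{\G^{k+1}}(\eta,\cdot)|$ comes out exactly to $q(N+1)-1+\varepsilon$ — which hinges on $q/p=q-1$ — and so that the auxiliary exponent $\varepsilon'$ can be absorbed into the $\varepsilon$ furnished by the hypothesis; the remaining steps are bookkeeping with the cocycle relation and with the inclusion $X_\G\subseteq X_\L$.
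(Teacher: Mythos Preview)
Your proof is correct and follows essentially the same approach as the paper: the double H\"older split (with the factor $\#S_{\G^k}(|\xi|)^{\pm 1/p}$ inserted so that one sum telescopes to $\sum_n n^{-(1+\varepsilon')}$ and the other becomes the hypothesized integral via the fibre partition) matches the paper's argument verbatim, as do the use of Lemma~\ref{Cocycle relation for partitions of X_G} for $\L$-equivariance, the partition identity for $Td=dT$, and the relation $\a_\L(\a_\G(\l,x),x)=\l$ on $X_\G\subseteq X_\L$ for $T\circ I=m(X_\G)\,\Id$. Your explicit recording of the identities $\a_\L(\g,x)=\l$ and $\g^{-1}\cdot x=\l^{-1}\cdot x$ on $A_\l(\g)$ is in fact a bit cleaner than the paper, which uses them implicitly in the equivariance computation.
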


\begin{proof}
Let $B \in   C^k_{\mathrm{pol} \leq N}(\G, I^p(V))$.  We first set for $ \xi = (\g_{1}, \ldots, \g_{k+1}) \in \G^{k+1}$, 
\begin{equation*}
    |\xi| = |\g_{1}| +  \ldots + |\g_{k+1}|.
\end{equation*}
The polynomial growth condition of $B$ gets rewritten as, for every $\xi \in \G^{k+1}$:
\begin{equation*}
     ||B(\xi )||_{L^p( X_\L, V)} \leq C (1 + |\xi|)^N.
\end{equation*}

Let $\eta = (\l_0, \ldots, \l_{k}) \in \L^{k+1}$. Since the cocycle $TB$ that we want to define will be $\L$-equivariant, we will assume that $\l_{0} = 1_\L$. Since $X_\G \subseteq X_\L$, we have $A_{1_\L}(\g) = \emptyset$ for $\g \in \G \setminus \{ 1_\G \}$. This means that we can take out the first variable in the sum and only have to look at the absolute convergence of 
\begin{equation*}
    \sum_{\xi = (\g_1, \ldots, \g_k) \in \G^{k}} \int_{ A_{(1, \l_1, \ldots, \l_{k})}(1,\g_1, \ldots, \g_k)} B(1,\g_1, \ldots, \g_k )(x) \mathrm{d} m (x).
\end{equation*}
We first use Hölder's inequality and the fact that $B$ has polynomial growth to obtain the following inequalities:
\begin{align*}
        \sum_{\xi \in \{ 1 \} \times \G^{k}} || \int_{ A_\eta(\xi) } B(\xi )(x) \mathrm{d} m (x) || & \leq \sum_{\xi \in\{ 1 \} \times \G^{k}} ||B(\xi )||_{\textrm L^p( A_\eta(\xi), V)} \, m(A_\eta(\xi))^{1/q} \\ 
        & \leq \sum_{\xi \in \{ 1 \} \times \G^{k}} ||B(\xi )||_{\textrm L^p( X_\L, V)} \, m(A_\eta(\xi))^{1/q} \\ 
        & \leq C  \sum_{\xi \in \{ 1 \} \times \G^{k}} (1 + |\xi|)^N \,  m(A_\eta(\xi))^{1/q}
    \end{align*}

It remains to estimate the convergence of the series 
\begin{equation*}
    \sum_{\xi \in \{ 1 \} \times \G^{k}}  |\xi| ^N \,  m(A_\eta(\xi))^{1/q}.
\end{equation*} 
Again, this relies on Hölder's inequality. For any $\e>0$ we have:
\begin{align*}
    & \sum_{\xi \in \{ 1 \} \times \G^{k}} |\xi| ^N \,  m(A_\eta(\xi))^{1/q} \\ 
    &= \sum_{\xi \in \{ 1 \} \times \G^{k}} \big(|\xi|^{- \frac{1 + \e}{p}} \#S_{\G^{k}}(|\xi|)^{-1/p} \big)  \big(\#S_{\G^{k}}(|\xi|)^{1/p}  |\xi|^{N + \frac{1 + \e}{p}} \, m(A_\eta(\xi))^{1/q} \big) \\
    & \leq \Big( \sum_{\xi \in \{ 1 \} \times \G^{k}} |\xi|^{-(1 + \e)} \#S_{\G^{k}}(|\xi|)^{-1} \Big)^{\frac{1}{p}} \Big( \sum_{\xi \in \{ 1 \} \times \G^{k}} \#S_{\G^{k}}(|\xi|)^{\frac{q}{p}}  |\xi|^{ q N + q \frac{1 + \e}{p}} \, m(A_\eta(\xi)) \Big)^{\frac{1}{q}}  \\
    = & \Big(\sum_{n \in \na} n^{- (1 + \e)} \Big)^{\frac{1}{p}} \Big( \sum_{\xi \in \{ 1 \} \times \G^{k}} \#S_{\G^{k}}(|\xi|)^{\frac{q}{p}}  |\xi|^{ q N + q \frac{1 + \e}{p}} \, m(A_\eta(\xi)) \Big)^{\frac{1}{q}}  \\
     = & \Big(\sum_{n \in \na} n^{- (1 + \e)} \Big)^{\frac{1}{p}} \Big( \int_{X_\G} \#S_{\G^{k}}( |\a_{\G^{k+1}}(\eta, x) | )^{\frac{q}{p}} |\a_{\G^{k+1}}(\eta, x) |^{(N+1)q - 1 + \frac{\e q }{p}} \mathrm{d} m (x) \Big)^{\frac{1}{q}} \\
     =& < \infty.
\end{align*}

Hence the series defining $TB(\eta)$ converge for $\eta \in \{1\} \times\L^{k}$. We will now show that $\L$-equivariance of the formula defining $TB(\eta$), which in turn will allow us extend the definition of $TB(\eta)$ to all of $\L^{k+1}$.
Let $\l \in \L, \eta = (1 ,\l_1, \ldots, \l_k) \in \{1\} \times\L^{k}$ and $B \in C_{\mathrm{pol \leq N}}^k(\G,I^p(\rho))$. Using the $I\rho (\g)$-equivariance of the cochain $B$, Lemma \ref{Cocycle relation for partitions of X_G}, the fact that the measure $m|_{X_\G}$ is $\L$-invariant and that the sets $(\l^{-1} \cdot A_\l(\g))_{\g \in \G}$ for a partition of $X_\G$, we have:
\begin{align*}
    TB( \l \eta)
    &= \sum_{\g \in \G, \xi' \in \G^{k}} \int_{A_\l(\g) \cap A_{\l \eta'}(\xi')} B(\g , \xi')(x)  \mathrm{d} m (x) \\
    & =\sum_{\g \in \G, \xi' \in \G^{k}} \int_{A_\l(\g) \cap A_{\l \eta'}(\xi')} (I \rho(\g) B(1 , \g^{-1} \xi'))(x)  \mathrm{d} m (x) \\
    &=\sum_{\g \in \G, \xi' \in \G^{k}} \int_{A_\l(\g) \cap A_{\l \eta'}(\xi')} \rho(\l) B(1 , \g^{-1} \xi')(\l ^{-1} \cdot x)  \mathrm{d} m (x) \\
    &=\sum_{\g \in \G, \xi' \in \G^{k}} \int_{A_\l(\g) \cap \l \cdot A_{\eta'}(\g^{-1}\xi')} \rho(\l) B(1 , \g^{-1} \xi')(\l ^{-1} \cdot x)  \mathrm{d} m (x) \\
    &=\sum_{\g \in \G, \xi' \in \G^{k}} \int_{\l^{-1}\cdot A_\l(\g) \cap  A_{\eta'}(\g^{-1}\xi')} \rho(\l) B(1 , \g^{-1} \xi')(y)  \mathrm{d} m (y) \\
     &=\sum_{\g \in \G, \xi' \in \G^{k}} \int_{\l^{-1}\cdot A_\l(\g) \cap  A_{\eta'}(\xi')} \rho(\l) B(1 , \xi')(y)  \mathrm{d} m (y) \\
     &=\sum_{\xi' \in \G^{k}} \int_{A_{\eta'}(\xi')} \rho(\l) B(1 , \xi')(y)  \mathrm{d} m (y) = \rho(\l) TB(\eta).
\end{align*}
Hence if the series defining $TB(\eta)$ converge absolutely, $TB(\l\eta)$ is also well defined, and the resulting cochain $TB$ is $\L$-equivariant.

The fact that the operator $T$, when well-defined, commutes with differentials is pretty formal. For $\eta =(\l_0, \ldots, \l_k) \in \L^{k+1}$, we write $\eta_i = (\l_0, \ldots, \hat{\l_i} , \ldots, \l_k ) \in \L^k$ for $0 \leq i \leq k$, similarly for elements $\G^{k+1}$. Let $\eta =(\l_0, \ldots, \l_k) \in \L^{k+1}$ and $B \in C^{k-1}_{\mathrm{pol} \leq N}(\G,I^p(V))$. We have:
\begin{align*}
    TdB (\l_0, \ldots, \l_k) & = \sum_{\xi = (\g_0, \ldots, \g_k) \in \G^{k+1}} \int_{A_{\eta}(\xi)} \sum_{i= 0}^k (-1)^{i} B(\xi_i ) (x) \mathrm{d} m (x) \\
    & = \sum_{i= 0}^k (-1)^{i} \sum_{\xi_i = (\g_0, \ldots, \hat{\g_i}, \ldots,  \g_k) \in \G^{k}} \sum_{\g_i \in \G} \int_{A_{\l_i}(\g_i) \cap A_{\eta_i}(\xi_i)} B(\xi_i ) (x) \mathrm{d} m (x) \\
    & = \sum_{i= 0}^k (-1)^{i} \sum_{\xi_i = (\g_0, \ldots, \hat{\g_i}, \ldots,  \g_k) \in \G^{k}} \int_{ A_{\eta_i}(\xi_i)} B(\xi_i ) (x) \mathrm{d} m (x) \\
    & = \sum_{i= 0}^k (-1)^{i} TB(\xi_i) = dTB (\l_0, \ldots, \l_k).
\end{align*}

Continuity of the map $T$ in the topology of uniform convergence on compact subsets follows from the dominated convergence theorem.

Finally, the last property uses that $X_\G \subseteq X_\L$ in order to have $\a_\L (\a_\G(\l, x),x) = \l$ for every $\l \in \L$ and $x \in X_\G$. For $c \in C_{\mathrm{pol} \leq N}^k(\G, \rho)$ and $\eta= (\l_0, \ldots, \l_k)\in \L^{k+1}$, we have:
\begin{align*}
    T I c(\eta) = \sum_{\xi\in \G^{k+1}} \int_{A_\eta (\xi)} Ic(\xi) (x) \mathrm{d} m (x)
\end{align*}
and for $x \in A_{\eta}(\xi)$, we have $Ic(\xi) (x) = c(\eta)$. Since $(A_\eta (\xi))_{\xi\in \G^{k+1}}$ is a partition of $X_\G$, we obtain $T I c(\eta) = m(X_\G) c(\eta)$.
\end{proof}

\section{Applications for virtually nilpotent groups}\label{Section: Applications for virtually nilpotent groups}

The goal of this section is to show Theorem \ref{Intro Theorem: Quantitative invariance of Betti numbers of nilpotent groups}, that is, invariance of Betti numbers of virtually nilpotent groups by mutually cobounded $\textrm L^p$-ME when $p$ is large enough, and to give examples of nilpotent groups with different Betti numbers.

We will use three general properties of virtually nilpotent groups.

The first property we need, which turns out to characterize virtually nilpotent groups, is polynomial growth (in the sense that there exists $d \in \re_{>0}$ such that $\#B_{\G,S}(n) := \{ \g \in \G, |\g|_S \leq n  \} \lesssim n^d$), which is guaranteed by Gromov's polynomial growth theorem. 
\begin{thm}(Gromov \cite{gromov-polynomial-growth})\label{Polynomial growth theorem}
    A finitely generated group $(\G, S)$ has polynomial growth if and only if $\G$ is virtually nilpotent.
\end{thm}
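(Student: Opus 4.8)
The statement to be proved is Gromov's polynomial growth theorem, a deep result; the plan below follows the classical route (Milnor--Wolf reduction plus Gromov's asymptotic-cone argument), with Kleiner's harmonic-function argument as an alternative for the key step.

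\textbf{The easy implication.} If $\G$ is virtually nilpotent, choose a nilpotent finite-index subgroup $\G_0 \leq \G$; since polynomial growth is unchanged under passing to finite-index over- or sub-groups, it suffices to bound the growth of $\G_0$. Let $Z$ be the last nontrivial term of the lower central series of $\G_0$; it is central and finitely generated abelian, and $\G_0/Z$ is nilpotent of smaller class. Fixing a finite generating set $S$ of $\G_0$, a commutator-collecting argument shows that any product of at most $n$ elements of $S$ can be rewritten as (a word of length $\lesssim n$ in the image of $S$ in $\G_0/Z$) times an element $z \in Z$ with $|z| \lesssim n^{c}$, where $c$ is the nilpotency class; since $\G_0/Z$ has polynomial growth by induction on the class and $Z$ is finitely generated abelian, $\#B_{\G_0,S}(n)$ is polynomially bounded. (Alternatively invoke the Bass--Guivarc'h formula directly.)

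\textbf{The hard implication: reduction.} Assume $\#B_{\G,S}(n) \lesssim n^{d}$. We induct on $\lfloor d\rfloor$ (with the trivial case $d<1$ forcing $\G$ finite). The Milnor--Wolf machinery reduces the statement to the following claim: \emph{an infinite finitely generated group of polynomial growth has a finite-index subgroup $N$ admitting a surjection $\varphi\colon N\twoheadrightarrow\ze$ whose kernel $H$ is finitely generated.} Granting this, $H$ is finitely generated and, by a counting argument comparing balls in $N$ to products of balls in $\ze$ and in $H$, $H$ has polynomial growth of strictly smaller degree; by the inductive hypothesis $H$ is virtually nilpotent, hence polycyclic-by-finite, hence so is the extension $N$ and therefore $\G$. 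One then finishes by Wolf's theorem that a polycyclic-by-finite group of polynomial growth is virtually nilpotent.

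\textbf{The hard implication: producing a map onto $\ze$.} This is the crux and the main obstacle. Gromov's route: form the asymptotic cone $Y$ of $\G$ as a pointed Gromov--Hausdorff (ultra)limit of the rescaled Cayley graphs $(\G,\tfrac1n d_S)$; the doubling bound coming from polynomial growth makes this sequence precompact, and $Y$ is a complete, proper, connected, locally connected, homogeneous geodesic space of finite topological dimension. The Montgomery--Zippin--Gleason solution of Hilbert's fifth problem then shows $\mathrm{Isom}(Y)$ is a Lie group with finitely many components acting transitively on $Y$, which yields a homomorphism from a finite-index subgroup of $\G$ into a connected Lie group with unbounded (hence infinite) image; amenability of $\G$ together with the Tits alternative rules out nonabelian free subgroups, and structure theory of Lie groups then produces a homomorphism from a finite-index subgroup onto $\ze$. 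Finite generation of the kernel is obtained from polynomial growth by a Reidemeister--Schreier/pigeonhole estimate on balls. A Hilbert's-fifth-free alternative (Kleiner) replaces this by: a group of polynomial growth carries a nonconstant Lipschitz harmonic function for any symmetric finite-support measure, the space of harmonic functions of bounded polynomial degree is finite-dimensional, and the resulting linear $\G$-representation yields the desired homomorphism. Either way, the genuine difficulty lies entirely in this step --- establishing the compactness and regularity of the asymptotic cone, verifying the hypotheses of the Montgomery--Zippin theorem (or, in Kleiner's approach, the Colding--Minicozzi finite-dimensionality input), and extracting an honest integer-valued homomorphism with finitely generated kernel --- and is the substance of Gromov's original paper; for the purposes of the present article the statement is simply quoted.
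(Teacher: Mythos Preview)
The paper does not prove this statement; it is simply cited as Gromov's theorem \cite{gromov-polynomial-growth} and used as a black box (only the easy direction, that virtually nilpotent implies polynomial growth, is actually needed downstream). Your sketch of the Milnor--Wolf/Gromov route (with Kleiner's alternative for the key step) is accurate as an outline of the literature proof, and you yourself correctly note in the final sentence that ``for the purposes of the present article the statement is simply quoted'' --- so no proof is expected here.
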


The second property we need is polynomiality of combinatorial filling functions.

\begin{thm}\label{Polynomiality of filling functions of nilpotent groups}
    Let $\G$ be a finitely generated nilpotent group. Then for every $d \in \na_{\geq 2}$, $\G$ is of type $F_d$ and the combinatorial filling function $\mathrm{cFV}^d_\G$ is polynomially bounded.
\end{thm}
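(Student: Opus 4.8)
The plan is to pass to a torsion-free finite-index subgroup, realize it by Mal'cev's theorem as a cocompact lattice in a simply connected nilpotent Lie group, and then feed in the known polynomial isoperimetric estimates for such groups.

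\textbf{Step 1 (reduction to the torsion-free case).} A finitely generated nilpotent group $\G$ has finite torsion subgroup $T$, and by residual finiteness it contains a finite-index normal subgroup $\G_0$ with $\G_0 \cap T = 1$, which is therefore torsion-free. Since the finiteness property $F_d$ passes to and from finite-index subgroups, and since $\mathrm{cFV}^d$ is---by the quasi-isometry invariance recalled before Lemma~\ref{Dehn function is nearly smaller than Filling Area}---invariant under commensurability (both $\G_0$ and $\G$ act properly and cocompactly on a common contractible complex, once one is produced for $\G_0$), it suffices to prove the statement for $\G_0$; I rename it $\G$.

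\textbf{Step 2 (geometric model and finiteness).} By Mal'cev's theorem $\G$ embeds as a cocompact lattice in a simply connected nilpotent Lie group $N$ (its Mal'cev completion). The exponential map $\exp\colon \mathfrak n \to N$ is a diffeomorphism, so $N$ is contractible (diffeomorphic to $\re^{\dim N}$) and the quotient $\G\backslash N$ is a compact nilmanifold. I would lift a smooth triangulation of $\G\backslash N$ to a $\G$-invariant triangulation of $N$, exhibiting $X := N$ as a contractible free simplicial $\G$-complex with cocompact skeleta in all dimensions. Hence $\G$ is of type $F_d$ for every $d \ge 2$ (indeed of type $F_\infty$) and $\mathrm{cFV}^d_\G \sim \mathrm{cFV}^d_N$ for all $d$.

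\textbf{Step 3 (polynomial filling).} It remains to bound $\mathrm{cFV}^d_N$. For $d = 2$ this is the classical fact that finitely generated nilpotent groups have polynomially bounded Dehn function (Gromov; see also \cite{young-filling-nilpotent}), which combined with Lemma~\ref{Dehn function is nearly smaller than Filling Area} gives $\mathrm{cFV}^2_\G$ polynomially bounded. For $d \ge 3$ one invokes Gromov's polynomial isoperimetric inequalities for simply connected nilpotent Lie groups in every dimension \cite{gromov} (see \cite{young-filling-nilpotent} for a detailed treatment of the combinatorial, integral versions), yielding $\mathrm{cFV}^d_N(t) \lesssim t^{D}$ for a polynomial degree $D = D(d,c)$ depending only on $d$ and the nilpotency class $c$. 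Transporting along $\mathrm{cFV}^d_\G \sim \mathrm{cFV}^d_N$ finishes the proof.

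\textbf{Main obstacle.} Steps 1 and 2 are routine; the only substantive input is the polynomial filling bound of Step 3 for $d \ge 3$. Were one to prove it from scratch rather than cite Gromov, the argument would proceed by induction on the nilpotency class via a central extension $1 \to \re^m \to N \to \overline N \to 1$ with $\overline N$ of class $c-1$: an integral $(d-1)$-cycle is filled by first filling its projection in $\overline N$ at polynomial cost (inductive hypothesis), lifting that filling, and then capping off the resulting $\re^m$-valued defect using the Euclidean isoperimetric inequality. Each such move distorts chains by the group multiplication, which is polynomial in exponential coordinates; controlling how the Lipschitz constants---hence the filling volumes---degrade at every step is the delicate part, and is exactly what Gromov (and later Young) carry out.
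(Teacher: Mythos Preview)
Your proposal is correct and structurally parallel to the paper's treatment: both pass through the Mal'cev completion to obtain the finiteness properties and then import known polynomial filling bounds for simply connected nilpotent Lie groups. The difference lies only in where the Step~3 input is sourced. You invoke Gromov's original isoperimetric estimates (with Young's combinatorial treatment), whereas the paper routes through Varopoulos's framework: nilpotent Lie groups are NC, NC implies polynomially retractable, and polynomial retractability yields the polynomial filling property for the Lipschitz filling functions $\mathrm{FV}^k_\G$; the Federer--Fleming deformation then transfers this to the combinatorial $\mathrm{cFV}^k_\G$. Your route is more direct in spirit and closer to the attribution in the statement; the paper's route has the advantage of giving a single self-contained reference chain and making the passage from Lipschitz to simplicial fillings explicit. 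Your explicit reduction to the torsion-free case in Step~1 and the inductive sketch at the end are not in the paper (which simply records the theorem as known), but they are correct and add useful context.
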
 

Finiteness properties of finitely generated nilpotent groups come from the fact that they are cocompact lattices in their Mal'cev completions \cite{malcev}.
It is hard to single out one reference for the polynomiality of filling functions of nilpotent groups. This is stated in \cite[p. 56 or depending on edition p. 82]{gromov}.
The book \cite{varopoulos-potential} allows to see it in different ways. For instance, in the book's terminology, nilpotent Lie groups are NC \cite[Section 2.2.1]{varopoulos-potential}, and if a group is NC then it is polynomially retractable \cite[Theorem 7.10]{varopoulos-potential} and being polynomially retractable implies having the polynomial filling property \cite[Definition 7.12]{varopoulos-potential}, where the latter means that the more classical Lipschitz versions of the filling functions $\mathrm{FV}_\G^k$ are polynomially bounded for every $k \in \na_{\geq 2}$. The Federer-Fleming deformation technique shows that $\mathrm{cFV}_\G^k \sim \mathrm{FV}_\G^k$ (see for instance \cite[Theorem 6.6]{bader-sauer}).

The third property that we need is the higher analogue of Shalom's property $H_T$ \cite[p. 125]{shalom2004harmonic}. To our knowledge, this property was first shown for nilpotent groups by Blanc \cite{blanc}.

\begin{thm} (Blanc \cite[10.5]{blanc}, \cite[p. 243]{guichardet-livre}) \label{Higher property H_T for nilpotent groups}
  Let $G$ be a connected and simply connected nilpotent Lie group. Then for every $k \geq 1$ and for every continuous unitary representation $\pi$ without $G$-invariant vectors we have:
\begin{equation*}
    \overline{H}^k(G, \pi) = 0.
\end{equation*}
\end{thm}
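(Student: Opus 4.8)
The plan is to argue by induction on the dimension (equivalently, the nilpotency class) of $G$, using the exact sequence relating cohomology of $G$, of a normal subgroup, and of the quotient. First I would reduce to the case where $\pi$ has no almost invariant vectors in a suitable sense: since $G$ is amenable, the absence of $G$-invariant vectors is what we have, but we must be careful because amenability means there \emph{are} almost invariant vectors; the key point will instead be to decompose $\pi$ according to the action of a central subgroup. Concretely, pick a one-parameter central subgroup $Z \cong \re$ of $G$ (the center of a nontrivial simply connected nilpotent Lie group is a nontrivial vector group, so this exists), and let $H = G/Z$, which is again connected, simply connected, and nilpotent of strictly smaller dimension. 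By the spectral sequence (or the associated long exact sequence in low degrees, iterated) for the central extension $1 \to Z \to G \to H \to 1$, one has $E_2^{p,q} = H^p(H, H^q(Z, \pi)) \Rightarrow H^{p+q}(G,\pi)$, and the analogous statement holds at the level of reduced cohomology provided one works with unitary representations and uses continuity of the differentials — this is exactly the setting where Hochschild--Serre behaves well for reduced cohomology of amenable groups (cf. \cite{guichardet-livre}).

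The next step is to analyze $\overline H^q(Z,\pi)$ as a unitary representation of $H$. Decompose $\pi|_Z = \int_{\widehat Z}^\oplus \pi_\chi \, d\mu(\chi)$ over the Pontryagin dual $\widehat Z \cong \re$. For the part supported away from the trivial character, $Z$ acts without invariant vectors, and since $Z \cong \re$ is abelian one computes directly that $\overline H^0(Z, \cdot) = \overline H^1(Z, \cdot) = 0$ there (the coboundary map $v \mapsto (\pi(z)-1)v$ has dense image when $1$ is not in the spectrum of the $Z$-action, using a spectral calculus / Mautner-type argument). Hence the only surviving contribution to $\overline H^*(Z,\pi)$ comes from the subrepresentation $\pi^Z$ of $Z$-invariant vectors; call it $\pi_0$, a unitary representation of $H = G/Z$. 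So the spectral sequence collapses to give $\overline H^k(G,\pi) \cong \overline H^k(H, \pi_0)$ (for all $k$, since $\overline H^q(Z,\pi_0)$ also vanishes for $q \geq 1$ as $Z$ acts trivially and $\re$ has cohomological dimension... wait — here one must instead note $\overline H^q(Z, \text{trivial } \re\text{-module}) $ need not vanish; the correct statement is that the reduced cohomology of $\re$ with trivial \emph{Hilbert} coefficients vanishes in positive degree because $H^1(\re,\mathbb{C}) = \mathbb C$ is already non-Hausdorff issues aside — actually $H^*(\re, \mathbb C)$ is $\mathbb C$ in degrees $0,1$; one handles this by a further, separate induction or by invoking that $\overline H^*$ of $\re$ with a trivial separable Hilbert module still feeds a clean edge map). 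I would organize this so that the inductive hypothesis is applied to $H$ with coefficients $\pi_0$.

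Now there is a dichotomy. If $\pi_0$ has no $H$-invariant vectors, the inductive hypothesis gives $\overline H^k(H,\pi_0) = 0$ and we are done. If $\pi_0$ does have $H$-invariant vectors, those vectors are $G$-invariant (since they are already $Z$-invariant and $Z$ together with its preimages generate $G$... more precisely $\pi_0^H$ pulls back to $\pi^G$), contradicting our hypothesis that $\pi$ has no $G$-invariant vectors — unless $\pi_0 = 0$ entirely, in which case again $\overline H^k(G,\pi) = 0$ trivially. This closes the induction, with base case $G = \re$: here $\overline H^k(\re, \pi) = 0$ for $k \geq 1$ when $\pi$ has no invariant vectors, which is the elementary spectral computation mentioned above (and $\overline H^0 = 0$ by definition of no invariant vectors).

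The main obstacle I anticipate is \emph{not} the group-theoretic bookkeeping but the functional-analytic subtlety of running the Hochschild--Serre spectral sequence for \emph{reduced} cohomology: differentials in a spectral sequence need not interact well with taking closures, and $\overline H^*$ is only the maximal Hausdorff quotient. The careful way around this, which I would follow, is to avoid the full spectral sequence and instead use just the five-term exact sequence in degrees $\leq 2$ together with the fact (already available in the amenable/unitary setting, e.g.\ via \cite{guichardet-livre} or Blanc's original argument) that for a central $\re$-extension the relevant transgression and inflation maps admit continuous splittings after passing to the $Z$-isotypic decomposition; then promote to all degrees by dimension-shifting, representing $\overline H^{k}(G,\pi) \cong \overline H^{k-1}(G, \pi')$ where $\pi'$ is a completion of a coinduced module, checking at each stage that $\pi'$ still has no invariant vectors. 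This is essentially the scheme of \cite[10.5]{blanc}, and I would cite it for the hardest continuity lemmas rather than reprove them.
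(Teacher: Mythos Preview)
The paper does not prove this theorem at all: it is quoted as a known result of Blanc, with a reference to \cite[10.5]{blanc} and \cite[p.~243]{guichardet-livre}, and is used as a black box. So there is no ``paper's own proof'' to compare against; your proposal is an attempt to sketch the original argument from the literature.

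Your overall strategy---induct on $\dim G$ via a central one-parameter subgroup $Z\cong\re$, use the Hochschild--Serre spectral sequence for $1\to Z\to G\to H\to 1$, and kill the part of $\pi$ on which $Z$ acts nontrivially by a spectral argument---is indeed the shape of Blanc's proof. There is, however, a concrete slip in your handling of the $Z$-trivial part. You claim the spectral sequence ``collapses to give $\overline H^k(G,\pi)\cong \overline H^k(H,\pi_0)$'', but when $Z=\re$ acts trivially on $\pi_0$ one has $H^1(Z,\pi_0)\cong\pi_0$, which is nonzero and Hausdorff; so there is a second $E_2$-term, namely $E_2^{k-1,1}\cong \overline H^{k-1}(H,\pi_0)$, that you cannot discard a priori. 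The repair is immediate once you notice it: since $\pi_0$ has no $H$-invariant vectors (else $\pi$ would have $G$-invariant vectors), the inductive hypothesis kills $\overline H^{k-1}(H,\pi_0)$ as well as $\overline H^{k}(H,\pi_0)$, and both contributions vanish. Your own parenthetical ``wait'' paragraph shows you sensed this but did not resolve it cleanly.

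The genuine technical burden, as you correctly flag, is making Hochschild--Serre work for \emph{reduced} continuous cohomology, where closures and spectral-sequence differentials interact badly. Your proposed workaround (five-term sequence plus dimension-shifting with coinduced modules) is plausible in spirit but is exactly the part of Blanc's paper that requires real care; citing \cite{blanc} for those continuity lemmas is the honest move, and is effectively what the present paper does by invoking the result wholesale.
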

By Shapiro's lemma for cocompact lattices \cite[3.2.2]{shalom2004harmonic}, this property holds for torsion-free finitely generated nilpotent groups. By Shapiro's lemma for finite index subgroups, it also holds for general finitely generated virtually nilpotent groups.

\subsection{Injectivity of induction}

We first specialize Theorem \ref{Higher transfer operator is well defined} to the case of virtually nilpotent groups, by introducing a more concrete integrability hypothesis. Theorem \ref{Intro Theorem: Injectivity of induction in polynomial cohomology} stated in the introduction is proven along the way (as it is just Theorem \ref{Higher transfer operator is well defined} with integrability conditions coming from nilpotent groups).

The following statement and its consequences will depend on the number $d_\G = \inf \{ d \in \re_{>0}, \#S_\G(n) \lesssim n^d \}$, where $S_\G (n) : = \{ \g \in \G, |\g|_S = n \}$ denotes the sphere of radius $n$ for the length function $| \cdot |$. In the introduction, we stated results using the constant $d(\G) = \inf \{ d \in \re_{>0}, \#B_\G(n) \lesssim n^d \}$, where $B_\G (n) : = \{ \g \in \G, |\g|_S \leq n \}$. These two differ at most by 1 as we have $d(\G) - 1 \leq d_\G \leq d(\G)$. Conjecturally we have $d_\G = d(\G) - 1$ \cite[8.2]{breuillard-le-donne} (see also \cite[8.3]{breuillard-le-donne}).

\begin{thm}\label{Injectivity of induction for nilpotent groups and arbitrary representation} Suppose that $\L$ and $\G$ are finitely generated virtually nilpotent groups and let $d_\G = \inf \{ d \in \re_{>0}, \#S_\G(n) \lesssim n^d \} < \infty$.
    Let $1<p,q<\infty$ such that $\frac{1}{p} + \frac{1}{q} = 1$ and $k \in \na, k \geq 2$. Let $N_{\L, k} = \prod_{i=2}^k \deg \mathrm{cFV_\L^i}, N_{\G,k}= \prod_{i=2}^k \deg \mathrm{cFV_\G^i}$ and $N = \max \{N_{\L, k}, N_{\G, k} \}$. Let $M> \max\{ \a_{\L,k-1}(N), \a_{\G,k-1}(N) \}$, where $\a_{\L,k-1}(N)$ and $\a_{\G,k-1}(N)$ are the constants appearing in Theorem \ref{Quantitative isomorphism of polynomial cohom onto ordinary cohom} when applied for both groups $\L$ and $\G$.
    
Suppose that $X_\G \subseteq X_\L$, the coupling is $L^{pM}$-integrable on $\L$ and that the coupling is $L^r$-integrable on $\G$ for some \begin{equation*}
    r >  \frac{d_\G k q}{p} + (M+1) q - 1.
\end{equation*}
Let $(\rho, V)$ be an isometric representation of $\L$ on some separable Banach space $V$. Then the induction map $I$ induces injective continuous maps
\begin{align*}
    H_{\mathrm{pol:}M \to N}^k(\L, \rho) &\xhookrightarrow{} H_{\mathrm{pol:}M \to N}^k(\G, I^p(\rho)), \\
    \overline{H}_{\mathrm{pol:}M \to N}^k(\L, \rho) &\xhookrightarrow{} \overline{H}_{\mathrm{pol:}M \to N}^k(\G, I^p(\rho)), \\
    H^k(\L, \rho) &\xhookrightarrow{} H^k(\G, I^p(\rho)), \\
    \overline{H}^k(\L, \rho) &\xhookrightarrow{} \overline{H}^k(\G,I^p(\rho)).
\end{align*}
\end{thm}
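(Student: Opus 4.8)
The strategy is to combine Theorem~\ref{Higher transfer operator is well defined} (existence and properties of the transfer operator $T$), Proposition~\ref{Induction on quantitative polynomial cohomology} (well-definedness of the induction map $I$), and Theorem~\ref{Quantitative isomorphism of polynomial cohom onto ordinary cohom} (the comparison maps between quantitative polynomial cohomology and usual cohomology are isomorphisms for $N,M$ in the stated range), exactly as Theorem~\ref{Intro Theorem: Injectivity of induction in polynomial cohomology} announces. The crux is to check that the abstract integrability hypothesis of Theorem~\ref{Higher transfer operator is well defined}, namely
\begin{equation*}
    \int_{X_\G} \#S_{\G^{k}}( |\a_{\G^{k+1}}(\eta, x) | )^{q/p}\, |\a_{\G^{k+1}}(\eta, x) |^{q(N+1) - 1 + \varepsilon}\, \mathrm{d} m (x) <  \infty
\end{equation*}
for every $\eta\in\{1_\L\}\times\L^k$ and some $\varepsilon>0$, follows from the hypothesis that the coupling is $L^r$-integrable on $\G$ with $r > \frac{d_\G k q}{p} + (M+1)q - 1$ together with $M\geq N$. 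Here polynomial growth of $\G$ (Theorem~\ref{Polynomial growth theorem}) enters through the bound $\#S_{\G^{k}}(n)\lesssim n^{d_\G k}$: indeed a ball of radius $n$ in $\G$ has at most $\lesssim n^{d_\G}$ elements, so a ball of radius $n$ in $\G^k$ (for the $\ell^1$-type length $|\xi|=\sum|\g_i|$) has at most $\lesssim n^{d_\G k}$ elements. Hence the integrand is bounded above by a constant times $|\a_{\G^{k+1}}(\eta,x)|^{\frac{d_\G k q}{p} + q(N+1) - 1 + \varepsilon}$, and since $N\leq M$ this exponent is $\leq \frac{d_\G k q}{p} + q(M+1) - 1 + \varepsilon$, which is $<r$ for $\varepsilon$ small enough; finiteness of the integral then reduces, via the cocycle relation for $\a_\G$ and Lemma~\ref{Growth of integrals is sublinear} (applied with exponent $r$ to the coupling on $\G$), to the assumed $L^r$-integrability tested on the finite generating set $S_\G$. (One must also note $|\a_{\G^{k+1}}(\eta,x)| = \sum_{i}|\a_\G(\l_i,x)|$, so the needed moment bound for the $k$-tuple follows from the moment bounds for each coordinate by the triangle/Minkowski inequality.)

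With this integrability verified, Theorem~\ref{Higher transfer operator is well defined} yields a continuous linear $T : C^k_{\mathrm{pol}\leq N}(\G,L^p(X_\L,V))^\G \to C^k(\L,V)^\L$ commuting with differentials, inducing continuous maps $H^k_{\mathrm{pol}:M\to N}(\G,I^p(\rho))\to H^k(\L,\rho)$ and the reduced analogue, and satisfying $T\circ I = m(X_\G)\,\Id$ \emph{provided} the coupling is $L^{pN}$-integrable on $\L$; this last hypothesis is guaranteed since the coupling is assumed $L^{pM}$-integrable on $\L$ and $M\geq N$. Meanwhile Proposition~\ref{Induction on quantitative polynomial cohomology}, applied with the role of its $r$ played by $pM$ (so that $pM\geq pM\geq pN$ and $r/p = M \geq M \geq N$), gives that $I : C^j_{\mathrm{pol}\leq M}(\L,V)^\L\to C^j_{\mathrm{pol}\leq M}(\G,L^p(X_\L,V))^\G$ is well-defined, continuous, $\G$-equivariant and commutes with differentials for all $j$, hence induces continuous maps $H^k_{\mathrm{pol}:M\to N}(\L,\rho)\to H^k_{\mathrm{pol}:M\to N}(\G,I^p(\rho))$ and its reduced version. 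The composition $T\circ I$ on cochains equals $m(X_\G)\,\Id$ on $C^*_{\mathrm{pol}\leq M}(\L,V)^\L$, hence on cohomology the composition
\begin{equation*}
    H^k_{\mathrm{pol}:M\to N}(\L,\rho)\xrightarrow{I} H^k_{\mathrm{pol}:M\to N}(\G,I^p(\rho))\xrightarrow{T} H^k(\L,\rho)
\end{equation*}
is $m(X_\G)$ times the comparison map $H^k_{\mathrm{pol}:M\to N}(\L,\rho)\to H^k(\L,\rho)$, and likewise in reduced cohomology. Since $m(X_\G)\neq 0$, injectivity of this composition is equivalent to injectivity of the comparison map. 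This proves the first two (quantitative) injectivity statements directly, and shows moreover that $I$ is injective on the \emph{reduced} quantitative polynomial cohomology, not just the unreduced one.

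For the last two (ordinary cohomology) statements, I would invoke Theorem~\ref{Quantitative isomorphism of polynomial cohom onto ordinary cohom} applied to $\L$: because $\L$ is virtually nilpotent, all its filling functions $\mathrm{cFV}^i_\L$ are polynomially bounded (Theorem~\ref{Polynomiality of filling functions of nilpotent groups}), so with $N\geq N_{\L,k}=\prod_{i=2}^k\deg\mathrm{cFV}^i_\L$ and $M>\a_{\L,k-1}(N)$ the comparison maps $H^k_{\mathrm{pol}:M\to N}(\L,\rho)\to H^k(\L,\rho)$ and $\overline{H}^k_{\mathrm{pol}:M\to N}(\L,\rho)\to \overline{H}^k(\L,\rho)$ are (continuous) isomorphisms; these choices of $N$ and $M$ are exactly those in the hypotheses. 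Under this identification the composite $T\circ I$ becomes $m(X_\G)$ times the identity on $H^k(\L,\rho)$ (resp.\ $\overline{H}^k(\L,\rho)$), so $I : H^k(\L,\rho)\to H^k(\G,I^p(\rho))$ has a left inverse (namely $m(X_\G)^{-1}T$ post-composed with the inverse comparison isomorphism) and is therefore injective; same for reduced cohomology. The one point of care is bookkeeping of constants: one must check that the single pair $(N,M)$ chosen in the statement simultaneously satisfies the hypotheses of Proposition~\ref{Induction on quantitative polynomial cohomology}, of Theorem~\ref{Higher transfer operator is well defined}, and of Theorem~\ref{Quantitative isomorphism of polynomial cohom onto ordinary cohom} for \emph{both} $\L$ and $\G$ (the latter needed only if one also wants the comparison statement on the $\G$-side, which is not required for injectivity but is harmless) --- this is why the hypotheses take $N=\max\{N_{\L,k},N_{\G,k}\}$ and $M>\max\{\a_{\L,k-1}(N),\a_{\G,k-1}(N)\}$. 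I expect the main obstacle to be precisely the first paragraph: translating the $L^r$-integrability on $\G$ into the combined polynomial-growth-times-power-moment condition on $\a_{\G^{k+1}}$, i.e.\ correctly absorbing the factor $\#S_{\G^k}(|\a_{\G^{k+1}}(\eta,\cdot)|)^{q/p}\lesssim|\a_{\G^{k+1}}(\eta,\cdot)|^{d_\G k q/p}$ and verifying the exponent inequality $\frac{d_\G k q}{p} + q(N+1) - 1 < r$ with room for $\varepsilon$, using $N\leq M$; everything after that is formal homological algebra with the transfer operator.
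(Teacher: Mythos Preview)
Your approach is essentially the same as the paper's: verify the integrability hypothesis of Theorem~\ref{Higher transfer operator is well defined} using polynomial growth of $\G$, apply that theorem and Proposition~\ref{Induction on quantitative polynomial cohomology} to obtain $T$ and $I$, use $T\circ I = m(X_\G)\,\Id$, and finish via Theorem~\ref{Quantitative isomorphism of polynomial cohom onto ordinary cohom}. Two points need correcting.

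First, your claim that the first two (polynomial) injectivity statements follow ``directly'' from $T\circ I = m(X_\G)\cdot(\text{comparison map for }\L)$ is premature: this only yields injectivity of $I$ on $H^k_{\mathrm{pol}:M\to N}(\L,\rho)$ once you know the $\L$-comparison map is injective, which requires Theorem~\ref{Quantitative isomorphism of polynomial cohom onto ordinary cohom} for $\L$. You invoke that theorem only in the next paragraph; the paper invokes it at exactly this point.

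Second, your parenthetical remark that the $\G$-side comparison isomorphism is ``not required for injectivity'' is incorrect. The transfer $T$ is defined only on $C^k_{\mathrm{pol}\leq N}(\G,L^p(X_\L,V))^\G$, hence only induces a map out of $H^k_{\mathrm{pol}:M\to N}(\G,I^p(\rho))$, not out of $H^k(\G,I^p(\rho))$. So there is no candidate left inverse for $I:H^k(\L,\rho)\to H^k(\G,I^p(\rho))$ unless you identify $H^k(\G,I^p(\rho))$ with $H^k_{\mathrm{pol}:M\to N}(\G,I^p(\rho))$ via Theorem~\ref{Quantitative isomorphism of polynomial cohom onto ordinary cohom} for $\G$. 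The paper makes this explicit by drawing the commutative square with both vertical comparison isomorphisms; this is precisely why the hypotheses take $N=\max\{N_{\L,k},N_{\G,k}\}$ and $M>\max\{\a_{\L,k-1}(N),\a_{\G,k-1}(N)\}$.
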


\begin{proof}
    Let $\xi= (\g_0, \ldots, \g_k) \in \{1_\G\} \times \G^{k}$. Theorem \ref{Polynomial growth theorem} says that $d_\G < \infty$, let $d< \infty$ be such that $\#S_\G(n) \lesssim n^d$, so that $\#S_{\G^k}(n) \lesssim n^{dk}$. Since $r > d_\G k \frac{q}{p} + (M+1) q - 1$, we can always choose $\varepsilon>0$ small and $d$ close to $d_\G$ so that we have $r- \varepsilon > d k \frac{q}{p} + (M+1) q - 1$. We have:
    \begin{align*}
    & \int_{X_\G} \#S_{\G^{k}}( |\a_{\G^{k+1}}(\xi, x) | )^{\frac{q}{p}} |\a_{\G^{k+1}}(\xi, x) |^{q(M+1) - 1 + \varepsilon} \mathrm{d} m (x) \\
    & \leq C_1  \int_{X_\G} |\a_{\G^{k+1}}(\xi, x) |^{\frac{d k q}{p} + q(M+1) - 1 + \varepsilon} \mathrm{d} m (x) \\ 
    &\leq C_1  \int_{X_\G} |\a_{\G^{k+1}}(\xi, x) |^{r} \mathrm{d} m (x) \\
     &\leq C_2  \sum_{i= 0}^{k} \int_{X_\G} |\a_{\G}(\g_i, x) |^{r} \mathrm{d} m (x).
\end{align*}
The last integral being finite (the coupling is $\textrm L^r$-integrable on $\G$), we can apply Theorem \ref{Higher transfer operator is well defined}, which says that the transfer map:
\begin{equation*}
        C_{\mathrm{pol} \leq D} (\G^{i+1}, L^p(X_\L, V))^\G\xrightarrow{T} C(\L^{i+1}, V)^\L
    \end{equation*}
    is well-defined for any $D \leq M$ and $i \leq k$.

    Since moreover the coupling is $\textrm L^{pM}$-integrable on $\L$, Proposition \ref{Induction on quantitative polynomial cohomology} says that the induction and transfer maps
    \begin{equation*}
        C_{\mathrm{pol} \leq D}(\L^{i+1}, V)^\L \xrightarrow{I} C_{\mathrm{pol} \leq D} (\G^{i+1}, L^p(X_\L, V))^\G\xrightarrow{T} C(\L^{i+1}, V)^\L
    \end{equation*}
    are well-defined and continuous for $i \leq k$ and any $D \leq M$. Since $T \circ I = m(X_\G) \,\Id$, we obtain induced maps in cohomology:
    \begin{equation*}
        H_{\mathrm{pol:}M \to N}^k(\L, \rho) \xrightarrow{I} H_{\mathrm{pol:}M \to N}^k(\G, I^p(\rho)) \xrightarrow{T} H^k(\L, \rho),
    \end{equation*}
    such that composition of these two maps is the natural comparison map (and this passes to reduced cohomology). Because of Theorem \ref{Polynomiality of filling functions of nilpotent groups}, we can apply Theorem \ref{Intro theorem: Quantitative isomorphism of polynomial cohom onto ordinary cohom} to the group $\L$ and obtain that $H_{\mathrm{pol:}M \to N}^k(\L, \rho) \to H^k(\L, \rho)$ is an isomorphism. This implies that the induction map on polynomial cohomology:
    \begin{equation*}
         H_{\mathrm{pol:}M \to N}^k(\L, \rho) \xrightarrow{I} H_{\mathrm{pol:}M \to N}^k(\G, I^p(\rho))
    \end{equation*}
    is injective (and also on reduced polynomial cohomology). By our choice of $M$ and $N$, Theorem \ref{Intro theorem: Quantitative isomorphism of polynomial cohom onto ordinary cohom} can also be applied to the group $\G$, and hence we obtain continuous injective maps:

\begin{center}
    \begin{tikzcd}[scale=0.7em]
	H_{\mathrm{pol:}M \to N}^k(\L, \rho) & H_{\mathrm{pol:}M \to N}^k(\G, I^p(\rho)) \\
	H^k(\L, \rho) & H^k(\G, I^p(\rho)) 
	\arrow["I", from=1-1, to=1-2]
	\arrow["{\simeq}"', from=1-1, to=2-1]
	\arrow["{\simeq}"', from=1-2, to=2-2]
	\arrow["I"', from=2-1, to=2-2]
\end{tikzcd}
\end{center}
    which also hold in reduced cohomology.
\end{proof}

\subsection{Betti numbers of nilpotent groups and $\textrm L^p$-ME}

The next Corollary is obtained by specializing the previous Theorem to the case of the trivial representation. Its proof is essentially the same as \cite[4.1.1]{shalom2004harmonic}.

\begin{cor}\label{Corollary: estimate on Betti numbers of nilpotent groups}
    Suppose that $\L$ and $\G$ are finitely generated virtually nilpotent groups. Let $d_\G = \inf \{ d \in \re_{>0}, \#S_\G(n) \lesssim n^d \}$.
    Let $k \in \na_{\geq 2}$. Let $N_{\L, k} = \prod_{i=1}^k \deg \mathrm{cFV_\L^i}, N_{\G,k}= \prod_{i=1}^k \deg \mathrm{cFV_\G^i}$ and $N = \max \{N_{\L, k}, N_{\G, k} \}$.  Let $M > \max \{\a_{\L,k-1}(N),\a_{\G,k-1}(N)\}$ where $\a_{\L,k}(N)$ and  $\a_{\G,k}(N)$ are the constants appearing in Theorem \ref{Quantitative isomorphism of polynomial cohom onto ordinary cohom} when applied for both groups $\L$ and $\G$.
    
Suppose that $X_\G \subseteq X_\L$, the coupling is $L^{2M}$-integrable on $\L$ and that the coupling is $L^r$-integrable on $\G$ for some \begin{equation*}
    r > d_\G k + 2M + 1.
\end{equation*}
Then $\dim_\re H^k(\L, \re) \leq \dim_\re H^k(\G, \re)$.
\end{cor}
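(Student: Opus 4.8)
The plan is to apply Theorem~\ref{Injectivity of induction for nilpotent groups and arbitrary representation} to the trivial representation of $\L$ on $V=\re$, with the choice $p=q=2$. First I would check that the hypotheses match: $L^{2M}$-integrability on $\L$ is exactly the required $L^{pM}$-integrability, the bound $r>d_\G k+2M+1$ is precisely $r>\frac{d_\G k q}{p}+(M+1)q-1$ when $p=q=2$, and the conditions imposed on $N$ and $M$ are the ones copied from that theorem. Since the representation is trivial, the induced module $I^2(V)$ is $L^2(X_\L,\re)$ equipped with the Koopman representation of the measure-preserving induced $\G$-action on $(X_\L,m|_{X_\L})$, because $(I\rho(\g)F)(x)=F(\g^{-1}\cdot x)$. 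Theorem~\ref{Injectivity of induction for nilpotent groups and arbitrary representation} then supplies a continuous injection $\overline H^k(\L,\re)\xhookrightarrow{\;I\;}\overline H^k(\G,L^2(X_\L,\re))$, together with its unreduced counterpart. As $\L$ and $\G$ are virtually nilpotent, hence of type $F_\infty$, the spaces $H^k(\L,\re)$ and $H^k(\G,\re)$ are finite-dimensional and coincide with their reduced versions, so it remains to bound $\dim_\re\overline H^k(\G,L^2(X_\L,\re))$ by $\dim_\re\overline H^k(\G,\re)$.

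To do this I would first reduce to an ergodic coupling. Passing to a $(\G\times\L)$-ergodic component $\Omega_0$ of $\Omega$ of positive measure, and intersecting the two fundamental domains with it, preserves all the hypotheses: the inclusion $X_\G\cap\Omega_0\subseteq X_\L\cap\Omega_0$ is immediate, and $L^{2M}$-integrability on $\L$ together with $L^r$-integrability on $\G$ survive by Fubini for the ergodic decomposition combined with the countability of $\G$ and $\L$ (a countable intersection of statements holding for almost every component still holds for almost every component, so one component works for all group elements at once), while the desired conclusion does not involve the coupling. For an ergodic coupling the induced $\G$-action on $X_\L=\Omega/\L$ is ergodic, since a $\G$-invariant subset of $\Omega/\L$ lifts to a $(\G\times\L)$-invariant subset of $\Omega$; hence $L^2(X_\L,\re)^\G$ is just the line of constant functions. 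Writing $L^2(X_\L,\re)=\re\oplus L^2_0(X_\L,\re)$ as an orthogonal direct sum of $\G$-subrepresentations, the summand $L^2_0(X_\L,\re)$ has no nonzero $\G$-invariant vector, so the higher property $H_T$ for virtually nilpotent groups (Theorem~\ref{Higher property H_T for nilpotent groups}, extended from simply connected nilpotent Lie groups to discrete virtually nilpotent groups via Shapiro's lemma as recalled after its statement) gives $\overline H^k(\G,L^2_0(X_\L,\re))=0$. Consequently $\overline H^k(\G,L^2(X_\L,\re))=\overline H^k(\G,\re)$.

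Combining the two steps yields $\dim_\re H^k(\L,\re)=\dim_\re\overline H^k(\L,\re)\le\dim_\re\overline H^k(\G,L^2(X_\L,\re))=\dim_\re\overline H^k(\G,\re)=\dim_\re H^k(\G,\re)$, which is the assertion. The one genuinely delicate point is the passage to an ergodic component: without ergodicity one only obtains $\overline H^k(\G,L^2(X_\L,\re))\cong\overline H^k(\G,\re)\otimes L^2(B)$ (suitably completed), where $B$ is the a priori infinite-dimensional $L^2$-space of the space of $\G$-ergodic components of $X_\L$, which is useless for the dimension bound. Once ergodicity is in hand, the rest is a formal combination of Theorem~\ref{Injectivity of induction for nilpotent groups and arbitrary representation}, Theorem~\ref{Higher property H_T for nilpotent groups}, and the finiteness properties of virtually nilpotent groups, following the strategy of \cite[4.1.1]{shalom2004harmonic}.
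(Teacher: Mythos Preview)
Your proof is correct and follows essentially the same route as the paper's: apply Theorem~\ref{Injectivity of induction for nilpotent groups and arbitrary representation} with $p=q=2$ to the trivial representation, reduce to a $\G$-ergodic situation, split $L^2(X_\L)=\re\oplus L^2_0(X_\L)$, and kill the second summand via Theorem~\ref{Higher property H_T for nilpotent groups}. The only cosmetic difference is that the paper invokes Krein--Milman to pass to a $\G$-ergodic measure on $X_\L$, whereas you pass to a $(\G\times\L)$-ergodic component of $\Omega$ and verify that the integrability hypotheses survive; your version is in fact the more careful of the two.
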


\begin{proof}
    We are under the same conditions as in Theorem \ref{Injectivity of induction for nilpotent groups and arbitrary representation} for $p = q = 2$. We apply this Theorem for the trivial representation. The induced representation is the Koopman representation of $\G$ on $L^2(X_\L)$. Since the measure $m$ is finite and $\G$-invariant on $X_\L$, Krein-Milman theorem implies that there exists a $\G$-ergodic finite measure on $X_\L$. Thus we may assume $m$ is $\G$-ergodic, so that $L^2(X_\L)^\G $ is the space of constant functions. If we denote by $L^2_0(X_\L)$ the subspace of functions with zero integral, we have that $L^2(X_\L) =  L^2(X_\L)^\G  \oplus L^2_0(X_\L)$ and so $ L^2_0(X_\L)^\G = 0$. Theorem \ref{Injectivity of induction for nilpotent groups and arbitrary representation} says that we have an injective map:
    \begin{equation*}
         \overline{H}^k(\L, \re) \to \overline{H}^k(\G, L^2 (X_\L)) = \overline{H}^k(\G, \re) \oplus \overline{H}^k(\G, L^2_0 (X_\L)).
    \end{equation*}

    Since $\G$ is nilpotent, Theorem \ref{Higher property H_T for nilpotent groups} says that $\overline{H}^k(\G,L^2_0 (X_\L)) = 0$. The spaces $H^k(\L, \re)$ and $H^k(\G, \re)$ are automatically Hausdorff as they are finite dimensional real vector spaces and coincide with their reduced versions. Hence we have an injective linear map
    \begin{equation*}
         H^k(\L, \re) \to H^k(\G, \re),
    \end{equation*}
    from where we deduce the dimension estimate.
\end{proof}

\begin{proof}[Proof of Theorem \ref{Intro Theorem: Quantitative invariance of Betti numbers of nilpotent groups}]

Let $\L$ and $\G$ be finitely generated virtually nilpotent groups with the same associated Carnot group. Let $d$ be the degree of polynomial growth of both $\L$ and $\G$. Let $N = \max \{\prod_{i=2}^k \deg \mathrm{cFV_\L^i}, \prod_{i=2}^k \deg \mathrm{cFV_\G^i} \}$ and let $M > \a_{k-1}(N)$ where the functions $\a_i$ are defined in Theorem \ref{Intro theorem: Quantitative isomorphism of polynomial cohom onto ordinary cohom}. Since the groups $\L$ and $\G$ are not hyperbolic, we can choose $\a_{k-1} (N)$ to be $N^2 + (k-2) N$ when $k \geq 3$ (Corollary \ref{Corollary: Comparison theorem in other cases}) and $\a_{k-1} (N) = \a_1(N) = N+1$ when $k = 2$ (Corollary \ref{Corollary: Comparison theorem in degree 2}). Suppose that $\L$ and $\G$ admit a mutually cobounded $\textrm L^p$-ME coupling $(\Omega, X_\L, X_\G, m)$ for some $p > d k + 2M + 1$.

Lemma \ref{Putting a fundamental domain inside the other} says that we can change $\G$ by a finite product $\G \times K$ to obtain a coupling $(\overline{\Omega}, Y_\L, Y_{\G \times K}, m)$ satisfying $Y_\G \subseteq Y_\L$. Corollary \ref{Corollary: estimate on Betti numbers of nilpotent groups} says that 
\begin{equation*}
     \dim_\re H^k(\L, \re) \leq \dim_\re H^k(\G \times K, \re) = \dim_\re H^k(\G, \re)
\end{equation*}
where the latter equality follows from the classical Shapiro lemma for finite-index subgroups. After exchanging the roles of $\L$ and $\G$ and applying Lemma \ref{Putting a fundamental domain inside the other} and Corollary \ref{Corollary: estimate on Betti numbers of nilpotent groups} again, we obtain:
\begin{equation*}
     \dim_\re H^k(\L, \re) = \dim_\re H^k(\G, \re).
\end{equation*}
\end{proof}

\subsection{Nilpotent Lie algebras and gradings}

Every finitely generated virtually nilpotent group contains a finite index subgroup that is torsion-free and nilpotent \cite[Lemma 4.6]{raghunathan}.
Moreover, every finitely generated, torsion-free, nilpotent group $\G$ arises as a cocompact lattice in a simply connected nilpotent Lie group $G$, called the \textit{Mal'cev completion} of $\G$, whose corresponding Lie algebra $\ag$ has a basis with rational structural constants \cite{malcev} \cite[2.18]{raghunathan}. One can compute Betti numbers of $\G$ using Betti numbers of the Lie algebra $\ag$ (which are defined using Lie algebra cohomology). Indeed, since $\G$ is cocompact in $G$, we have that 
\begin{equation*}
    b_n(\G) = \dim_\re H^k(\G, \re) = \dim_\re H^k(G, \re)  = b_n(G)
\end{equation*} for every $n$ (this is can be seen using \cite[3.2.2]{shalom2004harmonic} and \cite[p. 243]{guichardet-livre}). Moreover, we also have $b_n(G) =  b_n(\ag)$ \cite[Chapitre II]{guichardet-livre}, where the latter means Betti numbers in the sense of Lie algebra cohomology.

The central series of a Lie algebra $\ag$ is the sequence of Lie subalgebras $C^1(\ag) = \ag$ and $C^i(\ag) = [\ag, C^{i-1}(\ag)]$ for $i \geq 2$. A Lie algebra $\ag$ is $s$-nilpotent (where $s \in \na_{\geq 2}$) if $C^{s+1}(\ag) = 0$.

We say that a Lie algebra $\ag$ is \textit{Carnot} or \textit{graded} if it admits a decomposition $\ag = \bigoplus_{i \geq 1} \mathfrak{m}_i$, where $C^{i}(\ag) = \bigoplus_{j\geq i} \mathfrak{m}_i$ and $[ \mathfrak{m}_i,  \mathfrak{m}_j] \subseteq  \mathfrak{m}_{i+j}$ for all $i, j$.

The graded Lie algebra $\mathrm{gr}(\ag)$ associated to a nilpotent Lie algebra $\ag$ is obtained by considering the quotients $\mathfrak{m}_i = C^i(\ag) / C^{i+1}(\ag)$ for $i \geq 1$. The graded Lie algebra of $\ag$ is the sum $\mathrm{gr}(\ag) := \bigoplus_{i \geq 1} \mathfrak{m}_i$ with the natural Lie algebra brackets induced by considering $[X_i, X_j]$ as an element in $\mathfrak{m}_{i+j}$ whenever $X_i \in \mathfrak{m}_{i}$ and $X_j \in \mathfrak{m}_{j}$.

Let $\G$ is a finitely generated, torsion-free nilpotent group, call $G$ its \emph{Mal'cev completion} and call $\ag$ the Lie algebra of $G$. We will denote $\mathrm{gr}(G)$ the simply connected nilpotent Lie group whose Lie algebra is $\mathrm{gr}(\ag)$, we will call this group the \textit{associated Carnot group} (or \textit{Carnotification}) of $G$ or $\G$.

We recall some results on $\textrm L^p$-measure equivalence of nilpotent groups. On one hand, Austin showed that $\mathrm{L}^1$-ME finitely generated groups have bilipschitz asymptotic cones \cite[1.1]{austin2016}. Moreover, Pansu showed that asymptotic cones of a simply connected nilpotent Lie group $G$ are isomorphic to its Carnotification $\mathrm{gr}(G)$ \cite{pansu89-carnot}. Combining these two results, we obtain that two $\mathrm{L}^1$-ME finitely generated nilpotent groups have the same associated Carnot. On the other hand, Delabie, Llosa Isenrich and Tessera showed a (slightly stronger) converse to this result: namely, two virtually nilpotent finitely generated groups with isomorphic Carnot are $\textrm L^p$-OE for some $p>1$ \cite{delabie-llosa-tessera}. We sum up these results in the following statement:

\begin{thm} \cite[1.5]{delabie-llosa-tessera}
    Let $\G$ and $\L$ be two finitely generated virtually nilpotent groups. The following are equivalent: \\
    $(1)$ $\G$ and $\L$ are $\mathrm{L}^1$-OE, \\
    $(2)$ $\G$ and $\L$ are $\mathrm{L}^p$-OE for some $p>1$, \\
    $(3)$ $\G$ and $\L$ have isomorphic associated Carnot groups.
\end{thm}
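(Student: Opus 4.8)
The plan is to establish the cycle of implications $(2)\Rightarrow(1)\Rightarrow(3)\Rightarrow(2)$. The implication $(2)\Rightarrow(1)$ is a soft integrability estimate: if $(\Omega,X,X,m)$ is an $\mathrm L^p$-integrable OE-coupling with $p>1$ (so $X_\L=X_\G=:X$ has finite measure) and $q$ is the conjugate exponent of $p$, then H\"older's inequality gives, for every $\l\in\L$,
\[
\int_{X}|\a_\G(\l,x)|_{S_\G}\,\mathrm{d}m(x)\ \leq\ m(X)^{1/q}\Big(\int_{X}|\a_\G(\l,x)|_{S_\G}^{\,p}\,\mathrm{d}m(x)\Big)^{1/p}<\infty ,
\]
and symmetrically for $\a_\L$; hence the same coupling is $\mathrm L^1$-OE.

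For $(1)\Rightarrow(3)$, note first that an $\mathrm L^1$-OE coupling is in particular an $\mathrm L^1$-ME coupling, so Austin's theorem applies and $\G$ and $\L$ have bilipschitz asymptotic cones. I would then pass to finite-index torsion-free nilpotent subgroups $\G_0\leq\G$ and $\L_0\leq\L$; these are quasi-isometrically embedded, so their asymptotic cones are bilipschitz to those of $\G$ and $\L$, hence bilipschitz to one another. By Pansu's theorem the asymptotic cone of $\G_0$ (resp.\ $\L_0$) is the Carnotification $\mathrm{gr}(G_0)$ (resp.\ $\mathrm{gr}(L_0)$) of its Mal'cev completion, equipped with a Carnot--Carath\'eodory metric, so $\mathrm{gr}(G_0)$ and $\mathrm{gr}(L_0)$ are bilipschitz homeomorphic. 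Applying Pansu's differentiation theorem to such a homeomorphism and to its inverse, both are Pansu-differentiable almost everywhere, and at a point of mutual differentiability the two differentials are mutually inverse homogeneous group homomorphisms; since an invertible Pansu differential is a graded Lie group isomorphism, $\mathrm{gr}(G_0)\cong\mathrm{gr}(L_0)$. As the associated Carnot group of a virtually nilpotent group is defined through any finite-index torsion-free nilpotent subgroup, independently of this choice, this is exactly $(3)$.

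Finally, $(3)\Rightarrow(2)$ is precisely the converse construction of Delabie, Llosa Isenrich and Tessera recalled above: from an isomorphism of the associated Carnot groups one produces an $\mathrm L^p$-OE coupling between $\G$ and $\L$ for some $p>1$. I expect the main obstacle to be this last implication, which carries the genuine geometric content and which I would use as a black box since it is the cited result; a secondary delicate point, inside $(1)\Rightarrow(3)$, is upgrading a bilipschitz equivalence of Carnot groups to a Lie group isomorphism, which is not formal and relies on Pansu's Rademacher-type differentiation theorem together with the fact that the Pansu differential of a bilipschitz map is a homogeneous automorphism of the tangent Carnot group.
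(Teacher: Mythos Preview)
Your proposal is correct and follows exactly the outline the paper gives: the paper does not supply a self-contained proof but merely attributes $(1)\Rightarrow(3)$ to the combination of Austin's bilipschitz-cone theorem with Pansu's identification of the asymptotic cone as the Carnotification, and $(3)\Rightarrow(2)$ to the construction of Delabie--Llosa Isenrich--Tessera, while $(2)\Rightarrow(1)$ is immediate. Your addition of Pansu's differentiation theorem to pass from a bilipschitz equivalence of Carnot groups to a graded Lie group isomorphism is the right way to fill the gap the paper leaves implicit in the phrase ``have the same associated Carnot.''
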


In view of this result, Corollary \ref{Corollary: estimate on Betti numbers of nilpotent groups} is mostly interesting for distinguishing nilpotent groups with the same associated Carnot. In particular, Corollary \ref{Corollary: estimate on Betti numbers of nilpotent groups} gives finite upper bounds for the number $p$ for which two nilpotent groups with different Betti numbers can be $\mathrm{L}^p$-OE.

Some of our examples will come from central products. Let $G$ and $H$ be two groups with isomorphic centers, let $i : Z(G) \xrightarrow{\simeq} Z(H) $ be such an isomorphism. The central product $G \times_Z H$ is defined as the quotient $(G \times H) / \{ (x, i(x)), x \in Z(G) \}$. This construction is independent of the chosen isomorphism, and yields a Lie group (resp. a Lie algebra) if $G$ and $H$ are Lie groups (resp. Lie algebras). In the case $G$ and $H$ are Lie groups with isomorphic centers and Lie algebras $\ag$ and $\ah$, the central product $G\times_Z H$ is a Lie group with Lie algebra $\ag \times_Z \ah$.

\subsection{Examples of nilpotent groups with different Betti numbers via filiform algebras}

We will describe some examples of nilpotent Lie algebras for which their Lie algebra cohomology is understood. All of the examples we will present come from \cite{shalom2004harmonic}, \cite{garcia-llosa-pallier} and \cite{delabie-llosa-tessera}. More examples are provided in \cite{gotfredsen-kyed}. We could maybe enrich the list of examples by looking at the concrete bounds on filling functions provided in \cite{young-filling-nilpotent}.

We define a family of filiform Lie algebras $\al_n, n\geq 1$, generated by $X_1, \ldots, X_n$, where
\begin{align*}
    [X_i , X_j ] &= (j-i)X_{i+j} \quad \text{ for } i+j \leq n, \\
    &= 0 \qquad \quad \quad \quad \text{ otherwise.}
\end{align*}

Their graded Lie algebras $\mathrm{gr}(\al_n)$ are the so-called universal or model filiform algebras, they are given by generators $Y_1, \ldots, Y_n$, where
\begin{align*}
    [Y_1 , Y_j ] &= (j-1)Y_{j+1} \quad \text{ for } j \leq n -1, \\
    &= 0 \qquad \quad \quad \quad \text{ otherwise.}
\end{align*}

We call $L_n$ and $\mathrm{gr}(L_n)$ the simply connected nilpotent Lie groups integrating $\al_n$ and $\mathrm{gr}(\al_n)$. Let $\L_n < L_n$ and $\L_n' < \mathrm{gr}(L_n)$ be two cocompact lattices inside $ L_n$ and $\mathrm{gr}(L_n)$ respectively. By Mal'cev's theorem \cite[2.18]{raghunathan}, the groups $\L_n$ and $\L_n'$ are finitely generated, nilpotent and torsion-free.

The groups $L_n$ and $\mathrm{gr}(L_n)$ are $\textrm L^p$-OE for some (small) $p>1$ by \cite[1.4]{delabie-llosa-tessera} and hence $\L_n$ and $\L_n'$ are also $\textrm L^p$-OE for the same $p$.
On the other hand, as already pointed out in \cite[p. 152]{shalom2004harmonic}, these Lie algebras satisfy \begin{equation*}
     b_2(\al_n) = 3 < \left\lfloor\frac{1}{2}(n+1)\right\rfloor = b_2(\mathrm{gr}(\al_n)) 
\end{equation*}
for $n \geq 7$. Corollary \ref{Corollary: estimate on Betti numbers of nilpotent groups} shows that $\L_n$ and $\L_n'$ cannot be $\textrm L^p$-OE for $p > n^2 + n +5$ for every $n \geq 7$.

Much less is known for lower values of $n$. For instance, it is not known whether the groups $L_5$ and $\mathrm{gr}(L_5)$ are quasi-isometric or not. This is because all known quasi-isometry invariants (e.g. Betti numbers, homological filling functions, polynomial growth or graded algebras) fail to distinguish these two groups. Nevertheless the central products $\al_5 \times_{Z} \al_3$ and $\mathrm{gr}(\al_5) \times_{Z} \al_3$ have different Betti numbers in degree 3:
\begin{equation*}
    b_3(\al_5 \times_{Z} \al_3) = 8, \quad  b_3(\mathrm{gr}(\al_5) \times_{Z} \al_3) = 9.
\end{equation*}
\cite[Table 2, see the groups $L_{5,7} \times_Z L_{3,2}$ and $L_{5,6} \times_Z L_{3,2}$]{garcia-llosa-pallier}. Corollary \ref{Corollary: estimate on Betti numbers of nilpotent groups} implies that the corresponding groups cannot admit mutually cobounded $\textrm L^p$-ME couplings for large $p>1$. Notice that none of these two central products of Lie algebras is Carnot, but since they have the same graded algebras 
\begin{equation*}
    \mathrm{gr}(\al_5 \times_{Z} \al_3)\simeq \al_5  \times \al_2 \simeq  \mathrm{gr}(\mathrm{gr}(\al_5) \times_{Z} \al_3),
\end{equation*}
the groups $L_5\times_Z L_3$ and $\mathrm{gr}(L_5)\times_Z L_3$ must be $\textrm L^p$-OE for some $p>1$ \cite[1.4]{delabie-llosa-tessera}. It is worth noting that the difference between $L_5\times_Z L_3$ and $\mathrm{gr}(L_5)\times_Z L_3$ is somewhat harder to detect than the difference between $\mathrm{gr}(L_5)\times_Z L_3$ and $\mathrm{gr}(\mathrm{gr}(L_5)\times_Z L_3)$, as in the latter pair the second Betti number differs, while in the former pair this only happens for the third Betti number. In fact it was already known that the simply connected nilpotent Lie groups $\mathrm{gr}(L_5)\times_Z L_3$ and $\mathrm{gr}(\mathrm{gr}(L_5)\times_Z L_3)$ are not $\textrm L^p$-OE for any $p >5$ \cite[1.14]{delabie-llosa-tessera} using cohomology classes in degree 2 satisfying particular conditions when interpreted as central extensions.

We thank Claudio Llosa Isenrich and Gabriel Pallier for communicating these examples to us.

\section{Applications for non-cocompact lattices in rank 1 simple Lie groups}\label{Section: Applications to non-cocomp lattices in rank 1}

The article \cite{bader-sauer} deals with unitary cohomology of semisimple groups and their lattices. One of the main breakthroughs in their work is the use of polynomial cohomology to deal with cohomology of \textit{non-cocompact} lattices. Their methods do not take into account the degrees of the polynomials appearing in their cohomologies, so when dealing with lattices they only treat higher rank lattices, as these are $\textrm L^p$-integrable for every $0<p<\infty$. A non-cocompact lattice $\L$ in a rank 1 simple Lie group $G$ is $\textrm L^p$-integrable for $p < \mathrm{Confdim}(\partial G)$, but not $\textrm L^p$-integrable for $p \geq  \mathrm{Confdim}(\partial G)$. This means that if we want to perform induction as in \cite[Section 6]{bader-sauer}, we need some ideas from Section \ref{Section: A quantitative version of polynomial cohomology}.

We first start with a statement in a more general setting.

\begin{thm}\label{Induction for non cocompact lattice in abstract lcsc group}
    Let $\L$ be a lattice in a locally compact second countable group $G$. Let $(\rho, V)$ be an isometric representation of $\L$ on some separable Banach space $V$. Suppose that for some $d \in \na$, the higher homological filling functions $\mathrm{cFV}_\L^k$ of $\L$ are polynomially bounded for $k \leq d$. Let $N_k = \prod_{i= 2}^k \deg \mathrm{cFV}_\L^i$ and $1 \leq p < \infty$ and suppose that the lattice $\L$ is $\mathrm{L}^{pN_k}$-integrable in $G$. Then there exist continuous surjective linear maps:
    \begin{align*}
        H_\mathrm{ct}^k(G, I^p(\rho)) \twoheadrightarrow H^k(\L, \rho), \\
        \overline{H}_\mathrm{ct}^k(G, I^p(\rho)) \twoheadrightarrow \overline{H}^k(\L, \rho).
    \end{align*}
    
\end{thm}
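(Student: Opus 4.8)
The plan is to mimic the induction--transfer argument of Section \ref{Section: Induction and transfer} for the inclusion $\L \leq G$, replacing the discrete group $\G$ there by the ambient group $G$; this is exactly the route of Bader--Sauer \cite{bader-sauer}, the one new ingredient being that we keep track of polynomial degrees through the spaces $C^*_{\mathrm{pol}\leq N}$, which lets us trade their ``$\mathrm{L}^p$ for every $p$'' integrability for the single hypothesis $\mathrm{L}^{pN_k}$. Throughout write $N = N_k = \prod_{i=2}^k \deg \mathrm{cFV}_\L^i$. First I would realize the lattice inclusion as a coupling: $\Omega = G$, with $\L$ acting by left and $G$ by right translations, $X_\L$ a Borel fundamental domain for $\L\backslash G$ of finite Haar measure chosen to contain $e$, so that $X_\G = \{e\} \subseteq X_\L$, and $I^p(V) = L^p(X_\L, V)$ is the usual $\mathrm{L}^p$-induced representation, with cocycle $\a_\L : G \times X_\L \to \L$. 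The induction formula of Proposition \ref{Induction on quantitative polynomial cohomology}, $Ib(\g_0, \dots, \g_k)(x) = b(\a_\L(\g_0,x), \dots, \a_\L(\g_k,x))$, makes sense for $b \in C^j_{\mathrm{pol}\leq N}(\L^{j+1}, V)^\L$, and the estimate in the proof of that proposition (with exponent $r = pN_k$) shows that $\mathrm{L}^{pN_k}$-integrability of $\L$ in $G$ is precisely what guarantees $Ib(\g_0,\dots,\g_k) \in L^p(X_\L, V)$; with dominated convergence this makes $I$ a continuous, differential-commuting map into $C^j(G^{j+1}, L^p(X_\L, V))^G$. Composing with the inclusion $C^*_{\mathrm{pol}\leq N}(\L, V) \hookrightarrow C^*(\L, V)$ on the source, I obtain a continuous linear map $H^k_{\mathrm{pol}: N\to N}(\L, \rho) \to H^k(G, I^p(\rho))$ and its reduced analogue.

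Next I would construct the transfer operator $T : C^k(G, I^p(V))^G \to C^k(\L, V)^\L$ following the proof of Theorem \ref{Higher transfer operator is well defined} (cf. \cite[3.2.1]{shalom2004harmonic}), namely by integrating, over $x$ ranging in $X_\L \cong \L\backslash G$, the value of $B$ at the $G$-tuple that the cocycle carries to $(\l_0,\dots,\l_k)$ at the relevant point. The decisive simplification over Theorem \ref{Higher transfer operator is well defined} is that here there is no infinite sum to control: since $m(X_\L) = \mathrm{vol}(\L\backslash G) < \infty$ and each $B(\xi) \in L^p(X_\L, V) \subseteq L^1(X_\L, V)$, this integral converges for every tuple and every \emph{continuous} cochain $B$, with no integrability or growth hypothesis whatsoever. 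As in the proof of Theorem \ref{Higher transfer operator is well defined} one then checks that $T$ is linear, $\L$-equivariant, continuous for uniform convergence on compact subsets (dominated convergence), commutes with $d$, and --- using $X_\G \subseteq X_\L$ --- satisfies $T\circ I = m(X_\L)\cdot \Id$ at the level of cochains; hence $T$ descends to continuous linear maps $H^k(G, I^p(\rho)) \to H^k(\L,\rho)$ and $\overline{H}^k(G, I^p(\rho)) \to \overline{H}^k(\L,\rho)$.

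Combining the two steps, the composition $T \circ I$ equals $m(X_\L)$ times the comparison map $H^k_{\mathrm{pol}: N\to N}(\L, \rho) \to H^k(\L, \rho)$ (and similarly for reduced cohomology). Since the filling functions $\mathrm{cFV}_\L^i$ are polynomial for $2 \leq i \leq k \leq d$, the surjectivity part of Theorem \ref{Quantitative isomorphism of polynomial cohom onto ordinary cohom}, applied to $\L$ with $N = M = N_k$, says that this comparison map is surjective; therefore $T\circ I$ is surjective, and a fortiori $T : H^k(G, I^p(\rho)) \to H^k(\L, \rho)$ is surjective, as is $\overline{H}^k(G, I^p(\rho)) \to \overline{H}^k(\L, \rho)$. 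Note that no finiteness property of $G$ itself is needed, nor any structural information on $H^k(G,\cdot)$ --- only surjectivity of the comparison map for $\L$, which is why we invoke just the (easier) surjectivity half of Theorem \ref{Quantitative isomorphism of polynomial cohom onto ordinary cohom}.

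The step I expect to be the main obstacle is the construction of $T$ in this locally compact setting: verifying that it is genuinely defined on \emph{all} continuous $G$-cochains (this is where the finiteness of $\mathrm{vol}(\L\backslash G)$ does the work and replaces the integrability conditions of Theorem \ref{Higher transfer operator is well defined}, whose literal statement degenerates when $X_\G$ is a single point), that it is continuous for the compact-open topology so that it passes to reduced cohomology, and that the normalization $T\circ I = m(X_\L)\cdot\Id$ holds. Everything else is a transcription of the computations of Section \ref{Section: Induction and transfer} together with Theorem \ref{Quantitative isomorphism of polynomial cohom onto ordinary cohom}.
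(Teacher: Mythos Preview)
Your induction step and your use of the surjectivity half of Theorem \ref{Quantitative isomorphism of polynomial cohom onto ordinary cohom} are fine and match the paper. The gap is precisely where you suspected: the transfer $T$.

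The formula of Theorem \ref{Higher transfer operator is well defined} does not transcribe to a non-discrete $G$. In your coupling $X_G=\{e\}$, so the sets $A_\eta(\xi)\subseteq X_G$ are either empty or the single point $e$; the integral over $X_G$ vanishes because $m(\{e\})=0$, and if you drop the measure factor you are left with $TB(\l_0,\dots,\l_k)=B(\l_0,\dots,\l_k)(e)$, i.e.\ pointwise evaluation of an element of $L^p(X_\L,V)$ at $e$. That is not defined on a general cochain $B\in C^k(G,L^p(X_\L,V))^G$, only on those $B$ whose values happen to be continuous functions on $X_\L$ --- for instance on the image of $I$, which is why $T\circ I=\Id$ looks right. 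Integrating over $X_\L$ instead does not help: the naive average $\int_{X_\L}B(\l_0,\dots,\l_k)(x)\,dm(x)$ is not $\L$-equivariant, because for $\l\in\L$ viewed as an element of $G$ one has $B(\l\l_0,\dots,\l\l_k)=I\rho(\l)B(\l_0,\dots,\l_k)$ and the twist by $\rho(\a_\L(\l,x))$ varies with $x$. Finiteness of $\mathrm{vol}(\L\backslash G)$ ensures $L^p\subseteq L^1$ but does nothing about the pointwise-evaluation problem.

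The paper sidesteps this by abandoning an explicit transfer formula. It passes from $L^p(X,V)$ to $L^p_{\mathrm{loc}}(X,V)$, uses the identification $L^p_{\mathrm{loc}}(G^{k+1},L^p_{\mathrm{loc}}(X,V))^G\simeq L^p_{\mathrm{loc}}(G^{k+1},V)^\L$, and then invokes Blanc \cite[3.5]{blanc}: the complex $L^p_{\mathrm{loc}}(G^{*+1},V)$ is a relatively injective resolution of $V$ by continuous $\L$-modules. The fundamental lemma of relative homological algebra then produces a continuous chain homotopy inverse $T$ to the explicit chain map $f:C(\L^{*+1},V)^\L\to L^p_{\mathrm{loc}}(G^{*+1},V)^\L$, $fc(g_0,\dots,g_k)=c(\a(g_0,1_G),\dots,\a(g_k,1_G))$. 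One does not get $T\circ I$ equal to the inclusion on the nose, only that it induces the comparison map on cohomology --- which is all that is needed to run the surjectivity argument via Theorem \ref{Quantitative isomorphism of polynomial cohom onto ordinary cohom}. In short: the missing ingredient is the relative-injectivity input from \cite{blanc}, which replaces the unavailable explicit transfer.
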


\begin{proof}
    Our proof uses the same setting as in \cite[Section 6.5]{bader-sauer}.
    Let $X$ be a Borel fundamental domain for the right $\L$-action on $G$ and let $\a : G \times X \to \G$ be its corresponding lattice cocycle (with the convention that $g^{-1}x\a(g, x) \in X$ for $x \in X$ and $g \in G$). We define induction as in \cite[6.23]{bader-sauer}: for $c \in C^k (\L, V)^\L$ and $g_0, \ldots, g_k \in G$, let $I(c)(g_0, \ldots, g_k) : X \to V$ be the measurable function defined by
    \begin{equation*}
        I(c)(g_0, \ldots, g_k) (x) = c(\a(g_0, x), \ldots, \a(g_k, x)).
    \end{equation*}
    Even if we are in a slightly different setting from Proposition \ref{Induction on quantitative polynomial cohomology}, the integrability condition on $\L$ ensures in the same way that the induction map 
    \begin{equation*}
        I: C_{\mathrm{pol}\leq N_k }^k(\L, V)^\L \to L_\mathrm{loc}^p(G^{k+1}, L^p(X, V))^G
    \end{equation*}
    is well-defined. The inclusion $I^p(V) = L^p(X, V)$ into the Fréchet module $L_\mathrm{loc}^p(X, V)$ (with the same action) induces a natural map:
    \begin{equation*}
        L_\mathrm{loc}^p(G^{k+1},  L^p(X, V))^G \to L_\mathrm{loc}^p(G^{k+1}, L_\mathrm{loc}^p(X, V))^G.
    \end{equation*}
    We have a continuous isomorphism $L_\mathrm{loc}^p(G^{k+1}, L_\mathrm{loc}^p(X, V))^G \simeq L_\mathrm{loc}^p(G^{k+1},V)^\L$. We can construct a chain map $f: C(\L^{k+1}, V)^\L \to L_\mathrm{loc}^p(G^{*+1},V)^\L$ by setting for $c \in  C(\L^{k+1}, V)^\L$
    \begin{equation*}
        fc(g_0, \ldots, g_k) = c(\a(g_0, 1_G), \ldots, \a(g_k, 1_G)).
    \end{equation*}
    In \cite[3.5]{blanc}, it is shown that the modules $L_\mathrm{loc}^p(G^{*+1},V)$ form a relatively injective resolution by continuous $\L$-modules of the continuous $\L$-module $V$. Hence, by the fundamental theorem of relative homological algebra, the chain map $f$ admits an inverse chain homotopy equivalence. This means that we can construct a continuous chain homotopy equivalence:
    \begin{equation*}
         T : L_\mathrm{loc}^p(G^{k+1}, L_\mathrm{loc}^p(X, V))^G \simeq L_\mathrm{loc}^p(G^{*+1},V)^\L \to C(\L^{k+1}, V)^\L.
    \end{equation*}
    It can be shown that the composition $T \circ I$ is the comparison map $$C_{\mathrm{pol} \leq N_k}(\L^{k+1}, V)^\L \to C(\L^{k+1}, V)^\L.$$ Hence we obtain the following commutative diagram. 
\begin{center}
    \begin{tikzcd}[scale=0.7em]
	C_{\mathrm{pol} \leq N_k}(\L^{k+1}, V)^\L & L_\mathrm{loc}^p(G^{k+1}, L^p(X, V))^G \\
	C(\L^{k+1}, V)^\L & L_\mathrm{loc}^p(G^{k+1}, L_\mathrm{loc}^p(X, V))^G 
	\arrow["I", from=1-1, to=1-2]
	\arrow[""', from=1-1, to=2-1]
	\arrow[""', from=1-2, to=2-2]
	\arrow["T"', from=2-2, to=2-1]
\end{tikzcd}
\end{center}
Theorem \ref{Quantitative isomorphism of polynomial cohom onto ordinary cohom} says that the comparison map induces a surjective map in cohomology $H_{\mathrm{pol}: N_k \to N_k}^k(\L, V) \to H^k(\L, V)$. We obtain the following commutative diagram:
\begin{center}
    \begin{tikzcd}[scale=0.7em]
	H_{\mathrm{pol}: N_k \to N_k}^k(\L, V) & H^k_\mathrm{ct}(G,  L^p(X, V)) \\
	H^k(\L, V) & H^k_\mathrm{ct}(G, L_\mathrm{loc}^p(X, V)) 
	\arrow["I", from=1-1, to=1-2]
	\arrow[""', twoheadrightarrow, from=1-1, to=2-1]
	\arrow[""', from=1-2, to=2-2]
	\arrow["T"', from=2-2, to=2-1]
\end{tikzcd}
\end{center}
Hence there is a continuous surjective map $H^k_\mathrm{ct}(G,  L^p(X, V)) \to H^k(\L, V)$.

By continuity of $I$ and $T$, we have the same commutative diagram for reduced cohomology. The comparison map $\overline{H}_{\mathrm{pol}: N_k \to N_k}^k(\L, V)  \to \overline{H}^k(\L, V)$ is also surjective by Theorem \ref{Quantitative isomorphism of polynomial cohom onto ordinary cohom}, hence we obtain a continuous surjective map $\overline{H}^k_\mathrm{ct}(G,  L^p(X, V)) \to \overline{H}^k(\L, V)$.
\end{proof}

As a direct consequence of Theorem \ref{Induction for non cocompact lattice in abstract lcsc group} and Shalom's integrability conditions for non-cocompact lattices in rank 1, we obtain Theorem \ref{Intro Theorem: Surjectivity from induced cohomology of rank 1 lattices}.

\begin{proof}[Proof of Theorem \ref{Intro Theorem: Surjectivity from induced cohomology of rank 1 lattices}]

 Let $\L$ be a non-cocompact lattice in a simple Lie group $G$ of rank 1. In \cite[3.7]{shalom2000rigidity-semisimple}, Shalom proved that $\L$ is $\textrm L^q$-integrable in $G$ for every $q < \mathrm{Confdim}(\partial G)$ (though stated more clearly in the Appendix \cite[A.5]{marrakchi-dlSalle}). Hence we can use Theorem \ref{Induction for non cocompact lattice in abstract lcsc group} for any $p < \Cdim (\partial G) / N_k$.
    
\end{proof}

\subsection{Filling functions of non-cocompact lattices in rank 1}

 Leuzinger \cite{leuzinger-rank1} (elaborating on \cite{young-filling-nilpotent}) and Gruber \cite{gruber-filling} obtained sharp bounds for filling functions of non-cocompact lattices of rank 1 Lie groups. These bounds, combined with Theorem \ref{Induction for non cocompact lattice in abstract lcsc group}, will allow us to deduce information about their unitary and Banach cohomology. We are not aware of any precise computations of filling functions for lattices of the quaternionic hyperbolic space of dimension $4n$ in degrees above $n$ nor for the octonionic hyperbolic plane, nevertheless Wenger \cite[7.3]{wenger-asymptotic-rank} still provides some (likely non sharp) upper bounds for filling functions of Carnot groups which imply upper bounds for filling functions of all lattices in rank 1 simple Lie groups.

\begin{thm} \label{Sharp bound for filling functions of rank 1 lattices}
    Let $\L$ be a non-cocompact lattice in $G = \mathrm{Isom}_0 (\mathbb{H}^n_\mathbb{K})$, where $\mathbb{H}^n_\mathbb{K}$ denotes the hyperbolic space of dimension $n$ over $\mathbb{K} = \re, \co$ or the quaternions $\mathbb{H}$. \\
    $\bullet$ \cite[Theorem 3 and 4]{leuzinger-rank1} For $\mathbb{K} = \re$, we have: \\
    $\mathrm{cFV}^{k+1}_\L(t) \sim t^\frac{k+1}{k}$ for $1 \leq k \leq n-2$ \qquad $\mathrm{cFV}^{n}_\L(t) \sim t$.  \\
    $\bullet $ \cite[Theorems 3 and 5]{leuzinger-rank1} For $\mathbb{K} = \co$, we have: \\
     $\mathrm{cFV}^{k+1}_\L(t) \sim t^\frac{k+1}{k}$ for $1 \leq k \leq n-2$, \qquad $\mathrm{cFV}^{n}_\L(t) \sim t^\frac{n+1}{n-1}$, \\
     $\mathrm{cFV}^{k+1}_\L(t) \sim t^\frac{k+2}{k+1}$, for $n \leq k \leq 2n-2$, \qquad $\mathrm{cFV}^{2n}_\L(t) \sim t$. \\
    $\bullet$ \cite[Theorem 3]{leuzinger-rank1}  and \cite[5.9]{gruber-filling} For $\mathbb{K} = \mathbb{H}$, we have: \\
    $\mathrm{cFV}^{k+1}_\L(t) \sim t^\frac{k+1}{k}$ for $1 \leq k \leq n-2$ \qquad $\mathrm{cFV}^{4n}_\L(t) \sim t$.  
\end{thm}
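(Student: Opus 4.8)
The plan is to recognize this theorem as a transcription, into the language of the combinatorial homological filling functions $\mathrm{cFV}^\bullet$ used throughout this paper, of bounds already established in the literature: the real and complex cases are \cite[Theorems 4 and 5]{leuzinger-rank1} and the quaternionic case is \cite[5.9]{gruber-filling}. The only genuine work is the bookkeeping needed to match the notion of filling function each source uses with ours.

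First I would fix the geometric model. A non-cocompact lattice $\L$ in $G = \mathrm{Isom}_0(\mathbb{H}^n_\mathbb{K})$ is geometrically finite, hence acts properly and cocompactly on a cusp-truncated thick part $\overline{X}$ of the symmetric space $X = \mathbb{H}^n_\mathbb{K}$ (equivalently, on its Borel--Serre bordification), which is contractible with cocompact skeleta; in particular $\L$ is of type $F_\infty$. By the quasi-isometry invariance of $\mathrm{cFV}^k$ recalled in Section~\ref{Section: Preliminaries} (cf.\ \cite[Corollary 3]{alonso-wang-pride}), it suffices to estimate $\mathrm{cFV}^k_{\overline{X}}$, which is exactly what Leuzinger and Gruber compute.

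Next I would reconcile conventions. The cited works phrase their estimates in terms of homotopical higher Dehn functions, resp.\ Lipschitz filling volume functions $\mathrm{FV}^\bullet$, of the truncated space; these pass to the homological combinatorial functions $\mathrm{cFV}^k_\L$ via the comparisons recalled in Section~\ref{Section: Preliminaries}: Federer--Fleming deformation gives $\mathrm{cFV}^k_\L \sim \mathrm{FV}^k_\L$ (cf.\ \cite[Theorem 6.6]{bader-sauer}), and $\mathrm{cFV}^{k+1}_\L \sim \d^k_\L$ for $k \geq 3$, while in the two lowest degrees one combines $\d^2_\L \lesssim \mathrm{cFV}^3_\L$ with the matching lower bounds from the cited works and $\mathrm{cFV}^2_\L \lesssim \d_\L$ (Lemma~\ref{Dehn function is nearly smaller than Filling Area}). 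All the exponents $\tfrac{k+1}{k}$, $\tfrac{k+2}{k+1}$, $\tfrac{n+1}{n-1}$ appearing are strictly larger than $1$, so these comparisons preserve them. Feeding in the exponents computed in the cited works for the horospherical cross-sections --- Euclidean $\mathbb{R}^m$ cross-sections below the top degree giving $t^{(k+1)/k}$, Heisenberg-type cross-sections producing a jump at the degree of the center (the value $t^{(n+1)/(n-1)}$ in the complex case), and $t^{(k+2)/(k+1)}$ in the remaining intermediate degrees --- then reproduces the displayed asymptotics.

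The main obstacle I anticipate is this last reconciliation in degrees $1$ and $2$ and in the top degree: there the homotopical/homological comparison is only ``nearly'' an equivalence, and in particular the genuinely linear statements $\mathrm{cFV}^{\dim_\re X}_\L \sim t$ at the top cannot be extracted from Lemma~\ref{Dehn function is nearly smaller than Filling Area} (which only yields polynomial bounds of exponent $>1$); instead one argues directly, using that $X$ is Gromov hyperbolic and that $\L$ is a virtual duality group, so that the top-dimensional homological filling function is linear, a fact also contained in \cite{leuzinger-rank1} and \cite{gruber-filling}. Once those boundary cases are settled, the intermediate degrees follow mechanically from $\mathrm{cFV}^{k+1}_\L \sim \d^k_\L$ and the explicit computations in the cited works.
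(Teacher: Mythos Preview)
Your reading is correct: this theorem is not proved in the paper at all --- it is stated purely as a record of results from \cite{leuzinger-rank1} and \cite{gruber-filling}, with the citations placed inline in the statement, and then immediately used to compute the constants $N_k$. Your proposal is more than the paper offers, since you actually sketch the bookkeeping (Federer--Fleming, the $\mathrm{cFV}^{k+1}\sim\delta^k$ comparison, the low-degree and top-degree caveats) needed to pass from the cited sources' conventions to the combinatorial $\mathrm{cFV}^\bullet$ used here; the paper simply asserts the translation without comment.
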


This means that for a non-cocompact lattice $\L$ in $G = \mathrm{Isom}_0 (\mathbb{H}^n_\mathbb{K})$, the constant $N_k = \prod_{j=2}^k \deg (\mathrm{cFV}_\L^j)$ can be computed as: \\
$\bullet$ For $\mathbb{K} = \re$, we have: \\
    $N_k = k$ for $2 \leq k \leq n-1$, \quad $N_n = n-1$.  \\
$\bullet$ For $\mathbb{K} = \co$, we have: \\
     $N_k = k $ for $2 \leq k \leq n-1$, \quad $N_k = k+1$ for $n \leq k \leq 2n-1$, \quad  $N_{2n} = 2n$. \\
$\bullet$ For $\mathbb{K} = \mathbb{H}$, we have: \\
    $N_k = k$ for $2 \leq k \leq n-1$. \\

We now turn to deriving some general upper bounds for the remaining cases.

\begin{prop}\label{Rough upper bound for filling of rank 1 lattices}
    Let $\L$ be a non-cocompact lattice in $G = \mathrm{Isom}_0 (\mathbb{H}^n_\mathbb{K})$, where $\mathbb{H}^n_\mathbb{K}$ denotes the hyperbolic space of dimension $n \geq 2$ over $\mathbb{K} = \re, \co$, the quaternions $\mathbb{H}$ or the octonions $ \mathbb{O}$ (in which case $n=2$).
    For $2 \leq k \leq n-1$, we have:
    \begin{equation*}
        \mathrm{cFV}^{k}_\L(t) \lesssim t^\frac{2^k -1}{2^{k-1} - 1}.
    \end{equation*}
\end{prop}

\begin{proof}
    Let $G = KAN $ denote the Iwasawa decomposition of $G$, where $K$ is a maximal compact subgroup of $G$, $A$ a maximal $\re$-split torus in $G$ and $N$ is the unipotent radical of the minimal parabolic subgroup containing $A$. By the Federer-Fleming deformation technique, the combinatorial filling functions $\mathrm{cFV}_\Lambda^k$ are equivalent to their more classical singular homological variants $\mathrm{FV}_\Lambda^k$ \cite[Theorem 6.6]{bader-sauer}. The group $\L$ acts properly and cocompactly on the space $X_0$ obtained by removing a $\L$-invariant family of horoballs to the hyperbolic space $\mathbb{H}_\mathbb{K}^n = A N$, where the boundary of each horoball (horospheres) is isometric to the nilpotent Lie group $N$. As in the proof of \cite[Theorem 5]{leuzinger-rank1}, we obtain that 
    \begin{equation*}
        \mathrm{FV}_\Lambda^k \sim \mathrm{FV}_N^k
    \end{equation*}
    for every $k \geq 2$.
    The Lie group $N$ is Carnot and at most 2-step nilpotent. Hence \cite[7.3]{wenger-asymptotic-rank} shows that for every $k \geq 2$ we have    
    \begin{equation*}
        \mathrm{FV}^{k}_N(t) \lesssim t^{1 + \frac{2^{k-1}}{1 + 2 +\ldots + 2^{k-2}}} = t^{\frac{1 + 2 +\ldots + 2^{k-1}}{1 + 2 +\ldots + 2^{k-2}}} =t^\frac{2^k -1}{2^{k-1} - 1}.
    \end{equation*}
    which implies the desired result.
    \end{proof}

This result implies that we have the following general upper bound on the constant $N_k$:
\begin{equation*}
    N_k  \leq \prod_{j = 2}^k \frac{2^j -1}{2^{j-1} - 1} = 2^k  -1.
\end{equation*}

\subsection{Consequences for unitary representations and $L^p$-cohomology}

We describe in more detail the implications of this result for particular classes of representations of non-cocompact lattices in simple Lie groups of rank 1 of classical type. We start by dealing with unitary representations.

\begin{cor}
    Let $\L$ be a non-cocompact lattice in $G = \mathrm{Isom}_0 (\mathbb{H}^n_\mathbb{K})$, where $\mathbb{H}^n_\mathbb{K}$ denotes the hyperbolic space of dimension $n \geq 2$ over $\mathbb{K} = \re, \co$, the quaternions $\mathbb{H}$ or the octonions $\mathbb{O}$. Let $(\pi, \mathcal{H})$ be a unitary representation of $\L$. Then if one of the following holds: \\
    $\bullet$ $\mathbb{K} = \re$ and $1 \leq k <  \frac{n-1}{2}$, \\
    $\bullet $ $\mathbb{K} = \co$ or $\mathbb{H}$ and $1 \leq k < n$, \\
    $\bullet $ $\mathbb{K} = \mathbb{O}$, $n = 2$ and $k = 1, 2, 3$, \\
    there exist continuous surjective linear maps:
    \begin{align*}
        H^k_\mathrm{ct}(G, I^2(\pi)) \twoheadrightarrow H^k(\L, \pi), \\
        \overline{H}^k_\mathrm{ct}(G, I^2(\pi)) \twoheadrightarrow \overline{H}^k(\L, \pi).
    \end{align*}
\end{cor}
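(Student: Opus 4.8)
The plan is to derive this corollary directly from Theorem~\ref{Induction for non cocompact lattice in abstract lcsc group} applied with $p = 2$ and $\rho = \pi$, feeding in the filling-function data recorded in the previous subsection together with Shalom's integrability theorem for non-cocompact lattices in rank one. Fixing $G = \mathrm{Isom}_0(\mathbb{H}^n_\mathbb{K})$ and a non-cocompact lattice $\L < G$, I would first note that to apply the theorem in degree $k$ it is enough to verify two things: that the filling functions $\mathrm{cFV}_\L^i$ are polynomial for $2 \leq i \leq k$ (so that $N_k = \prod_{i=2}^k \deg\mathrm{cFV}_\L^i$ makes sense), and that $\L$ is $\mathrm{L}^{2N_k}$-integrable in $G$.

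For the first point I would invoke the estimates of the preceding subsection — Leuzinger \cite{leuzinger-rank1} for $\mathbb{K} = \re, \co$ and Gruber \cite{gruber-filling} for $\mathbb{K} = \mathbb{H}$: in all three cases $\mathrm{cFV}_\L^i(t) \sim t^{i/(i-1)}$ for $2 \leq i \leq n-1$, hence $\deg\mathrm{cFV}_\L^i = i/(i-1)$ and the product telescopes to $N_k = k$ throughout the ranges appearing in the statement (all of which satisfy $k < n$, so they lie inside the region where the filling functions are known to be polynomial; this is precisely why the complex and quaternionic cases are restricted to $k < n$, higher-degree filling functions for $\mathbb{H}$ being unavailable). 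For the second point I would use Shalom's theorem \cite[3.7]{shalom2000rigidity-semisimple} (see also \cite[A.5]{marrakchi-dlSalle}), which gives $\mathrm{L}^q$-integrability of $\L$ for every $q < \mathrm{Confdim}(\partial G)$, together with $\mathrm{Confdim}(\partial\mathbb{H}^n_\re) = n-1$, $\mathrm{Confdim}(\partial\mathbb{H}^n_\co) = 2n$ and $\mathrm{Confdim}(\partial\mathbb{H}^n_\mathbb{H}) = 4n+2$. Thus $\L$ is $\mathrm{L}^{2N_k} = \mathrm{L}^{2k}$-integrable as soon as $2k < \mathrm{Confdim}(\partial G)$, which reads $k < (n-1)/2$ when $\mathbb{K} = \re$, $k < n$ when $\mathbb{K} = \co$ (since then $2k < 2n$), and is automatic in the range $k \leq n-1$ when $\mathbb{K} = \mathbb{H}$ (since $2k \leq 2n-2 < 4n+2$) — exactly the ranges in the statement.

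With both hypotheses in hand, I would conclude by applying Theorem~\ref{Induction for non cocompact lattice in abstract lcsc group}: as $G$ is unimodular, the fundamental domain $X \cong G/\L$ carries a finite $G$-invariant measure, so $I^2(\pi) = L^2(X, \mathcal{H})$ is again a unitary representation of $G$, and the theorem yields the asserted surjections $H^k_\mathrm{ct}(G, I^2(\pi)) \twoheadrightarrow H^k(\L, \pi)$ and $\overline{H}^k_\mathrm{ct}(G, I^2(\pi)) \twoheadrightarrow \overline{H}^k(\L, \pi)$. I do not expect a genuine obstacle here: all the work sits in Theorem~\ref{Induction for non cocompact lattice in abstract lcsc group}, and what is left is bookkeeping — the only subtle points being that Shalom's integrability is available only \emph{strictly} below the conformal dimension (which is why the ranges for $k$ are open), and that one must remain within the degrees where the filling functions of $\L$ are actually known to be polynomial.
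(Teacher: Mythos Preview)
Your proposal is correct and follows essentially the same route as the paper: both feed the filling-function data ($N_k = k$ in the relevant range) together with Shalom's $\mathrm{L}^q$-integrability for $q < \mathrm{Confdim}(\partial G) = d(n+1)-2$ into the induction theorem at $p=2$, and the resulting inequality $2k < d(n+1)-2$ reduces to exactly the stated ranges on $k$. The only cosmetic difference is that the paper routes through Theorem~\ref{Intro Theorem: Surjectivity from induced cohomology of rank 1 lattices} (which already packages Theorem~\ref{Induction for non cocompact lattice in abstract lcsc group} with Shalom's integrability), whereas you invoke Theorem~\ref{Induction for non cocompact lattice in abstract lcsc group} and Shalom separately.
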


\begin{proof}
    Let $d = \dim_\re \mathbb{K}$. In order to use Theorem \ref{Intro Theorem: Surjectivity from induced cohomology of rank 1 lattices} for $p = 2$, we need that $2 N_k < d(n+1) -2$.
With our previous computation of $N_k$ obtained after stating Theorem \ref{Sharp bound for filling functions of rank 1 lattices}, this condition reduces to $2 k < d(n+1) -2$ for $1 \leq k < n$. \\ 
For $\mathbb K=\re, d = 1$, this is only satisfied for $1 \leq k <  \frac{n-1}{2}$. \\
For $\mathbb K=\mathbb C, d = 2 \text{ and } \mathbb K=\mathbb H,d=4$ this is satisfied as soon as $1 \leq k < n$. \\
For $\mathbb K=\mathbb O, d =8$ and $n =2$, we use the inequality $N_k \leq 2^k -1$ obtained after Proposition \ref{Rough upper bound for filling of rank 1 lattices} and see that the condition $2N_k < d(n+1) - 2 = 22$ is satisfied for $k =1,  2, 3$.

\end{proof}

\begin{rem}
    The range of $k$ for which this result holds should be larger in the quaternionic case. The reason we cannot go higher is because we do not know good estimates on the filling functions of (non-uniform) lattices in $\mathbb{H}_\mathbb{H}^n$ in degrees between $n$ and $3n$. At least looking at the real and the complex case, we expect to reach all degrees below half of the dimension.
\end{rem}

In \cite[Appendix A]{bader-sauer}, it is shown that the real simple Lie group $G = \mathrm{Isom}_0( \mathbb{H}_\mathbb{O}^2)$ has \emph{property $[T_3]$} \cite[Definition 1.1]{bader-sauer}. The previous result shows that this property is inherited by its lattices.

\begin{cor}\label{Octonionic lattices have T_3}
    Let $\L$ be a non-cocompact lattice in $G = \mathrm{Isom}_0 (\mathbb{H}^2_\mathbb{O})$. Then $\L$ has property $[T_3]$, that is, for every unitary representation $\pi$ of $\L$, we have $H^k (\L, \pi) = 0$ for $k =1, 2, 3$.
\end{cor}

An important Banach representation is the regular representation in $L^p$. Given a locally compact second countable group $G$, we denote by $H^*_\mathrm{ct}(G, L^p(G))$ the (continuous) cohomology of $G$ with values in the right regular representation on $L^p(G) := L^p(G, m_G)$, where $m_G$ is a left Haar measure, we commonly call this space the \textit{(continuous) group $ L^p$-cohomology} of $G$. Some properties that make $ L^p$-cohomology interesting are the following. \\
$\bullet$ It is invariant by quasi-isometries \cite{pansu95}, \cite[1.1]{bourdon-remy-vanishings}. \\
$\bullet$ It has natural geometric interpretations: there are simplicial and de Rham versions of $ L^p$-cohomology, which coincide with the group-theoretic version for well-behaved simplicial complexes and manifolds admitting nice geometric actions \cite[3.2]{bourdon-remy-vanishings}, \cite[A.1]{bourdon-remy-non-vanishing}.

Pansu showed a vanishing result for de Rham $ L^p$-cohomology of manifolds with pinched strictly negative sectional curvature \cite[Théorème A]{pansu08}. This applies in particular to non-compact globally symmetric spaces of rank 1, and by extension to their isometry groups and their cocompact lattices. As an application of Theorem \ref{Induction for non cocompact lattice in abstract lcsc group}, we obtain that non-cocompact lattices satisfy the same vanishing result as their ambient simple Lie groups of rank 1.

\begin{cor}\label{Vanishing of Lp-cohom for rank 1 lattices}
Let $\L$ be a non-cocompact lattice in $G = \mathrm{Isom}_0 (\mathbb{H}^n_\mathbb{K})$, where $\mathbb{H}^n_\mathbb{K}$ denotes the hyperbolic space of dimension $n$ over $\mathbb{K} = \re, \co$ or the quaternions $\mathbb{H}$. Let $k \geq 1$ and $1 \leq p < \infty$. We have $H^k(\L, \ell^p(\L)) = 0$ if \\
$\bullet$ $\mathbb{K} = \re$, $1 \leq k \leq n-2$ and $1 \leq p <  \frac{n-1}{k}$, \\
$\bullet $ $\mathbb{K}= \co$, $1 \leq k < 2n-2$ and $1 \leq p <  \frac{2n + k -1}{2k}$, \\
$\bullet $ $\mathbb{K} =  \mathbb{H}$, $1 \leq k \leq n-1$ and $1 \leq p <  \frac{4n + k -1}{2k}$, \\
$\bullet $ $\mathbb{K} =  \mathbb{O}$, $k = 1, 2, 3$ and $1 \leq p <  \frac{15 + k}{2k}$.
\end{cor}

\begin{proof}
Let $d = \dim_\re \mathbb{K}$. In order to use Theorem \ref{Induction for non cocompact lattice in abstract lcsc group} for some $1 \leq p < \infty$ we need 
\begin{equation*}
    p N_k < \mathrm{Confdim}(\partial \mathbb{H}^n_\mathbb{K}) = d(n+1) -2.
\end{equation*} 
Under this condition there is a surjective continuous map 
\begin{equation*}
    H^k_\mathrm{ct}(G, I^p(\ell^p (\L))) \twoheadrightarrow H^k(\L, \ell^p(\L)).
\end{equation*} 
But the induced representation $I^p(\ell^p (\L))$ acting on $ L^p(X, \ell^p(\L)) \simeq L^p(G)$ is continuously equivalent to the regular representation of $G$ on $L^p(G)$. Cohomology of the latter representation is in turn equivalent to the de Rham $ L^p$-cohomology of $\mathbb{H}^n_\mathbb{K}$. To sum up
\begin{equation*}
      L^pH^k_\mathrm{dR} (\mathbb{H}^n_\mathbb{K}) \simeq H^k_\mathrm{ct}(G, L^p(G)) \simeq  H^k_\mathrm{ct}(G, I^p(\ell^p (\L))) \twoheadrightarrow H^k(\L, \ell^p(\L)).
\end{equation*}
In \cite[Théorème A]{pansu08}, Pansu showed that 
\begin{equation*}
      L^pH^k_\mathrm{dR} (\mathbb{H}^n_\mathbb{K}) = 0 \text{ for } 1 \leq p < 1 + \frac{dn-k-1}{k} \sqrt{-\d},
\end{equation*}
where $\d$ is the pinching of the sectional curvature of $\mathbb{H}^n_\mathbb{K}$. For $\mathbb{K} = \re$, we have $\d = -1$ and for $\mathbb{K} = \co, \mathbb{H}, \mathbb{O}$ we have $\d =- \frac{1}{4}$. Thus, we obtain 
\begin{equation*}
     H^k(\L, \ell^p(\L)) = 0 \text{ for } 1 \leq p < \min \left\{ \frac{ d(n+1) -2}{N_k},  1 + \frac{dn-k-1}{k} \sqrt{-\d} \right\}.
\end{equation*}
Using our computation of $N_k$, case by case verification yields that 
\begin{equation*}
     \frac{ d(n+1) -2}{N_k} \geq   1 + \frac{dn-k-1}{k} \sqrt{-\d}. 
\end{equation*}
Hence we obtain $H^k(\L, \ell^p(\L)) = 0$ for $1 \leq p <  1 + \frac{dn-k-1}{k} \sqrt{-\d} $, which gives the statement after replacing $d$ and $\d$ by their respective values.
\end{proof}

\appendix

\bibliographystyle{amsalpha}
\bibliography{refs.bib}

\noindent Antonio López Neumann \\
Université Paris Cité, Sorbonne Université, CNRS, IMJ-PRG, F-75013 Paris, France \\
lopezneumann@imj-prg.fr \\

\noindent Juan Paucar Zanabria\\
Université Paris-Cité, IMJ-PRG, Paris\\
jpaucar@imj-prg.fr\\

\end{document}